\definecolor{red}{RGB}{255,25,25}
\definecolor{blue}{RGB}{25,50,200}
\newtheorem{theorem}{Theorem}[section]
\crefname{theorem}{Theorem}{Theorems}
\newtheorem{lemma}[theorem]{Lemma}
\crefname{lemma}{Lemma}{Lemmas}
\newtheorem{proposition}[theorem]{Proposition}
\crefname{proposition}{Proposition}{Propositions}
\crefname{prop}{Proposition}{Propositions}
\newtheorem{corollary}[theorem]{Corollary}
\crefname{corollary}{Corollary}{Corollaries}
\crefname{cor}{Corollary}{Corollaries}
\newtheorem{conjecture}[theorem]{Conjecture}
\crefname{conjecture}{Conjecture}{Conjectures}
\crefname{conj}{Conjecture}{Conjectures}
\newtheorem*{conj*}{Conjecture}
\crefname{conj}{Conjecture}{Conjectures}
\crefname{conjA}{Conjecture}{Conjecture}
\crefname{conjB}{Conjecture}{Conjecture}
\crefname{conjC}{Conjecture}{Conjecture}
\crefname{conjDk}{Conjecture}{Conjecture}
\crefname{conjD}{Conjecture}{Conjecture}
\crefname{conjH}{Conjecture}{Conjecture}
\crefname{conjGr}{Conjecture}{Conjecture}
\theoremstyle{definition}
\newtheorem{definition}[theorem]{Definition}
\crefname{definition}{Definition}{Definitions}
\crefname{defn}{Definition}{Definitions}
\crefname{example}{Example}{Examples}
\crefname{notation}{Notation}{Notation}
\newtheorem*{notation*}{Notation}
\crefname{notation}{Notation}{Notation}
\newtheorem*{convention*}{Convention}
\crefname{convention}{Convention}{Convention}
\crefname{problem}{Problem}{Problems}
\newtheorem{question}[theorem]{Question}
\crefname{question}{Question}{Questions}
\crefname{condition}{Condition}{Conditions}
\crefname{assumption}{Assumption}{Assumptions}
\newtheorem{hypothesis}[theorem]{Hypothesis}
\crefname{hypothesis}{Hypothesis}{hypotheses}
\theoremstyle{remark}
\crefname{rmk}{Remark}{Remarks}
\newtheorem*{rmk*}{Remark}
\crefname{rmk}{Remark}{Remarks}
\newtheorem{remark}[theorem]{Remark}
\crefname{remark}{Remark}{Remarks}
\crefname{fact}{Fact}{Facts}
\newtheorem{claim}[theorem]{Claim}
\crefname{claim}{Claim}{Claims}
\newtheorem*{claim*}{Claim}
\crefname{claim}{Claim}{Claims}
\crefname{step}{Step}{Steps}
\crefname{case}{Case}{Cases}
\numberwithin{equation}{section}
\renewcommand{\emptyset}{\varnothing}
\newcommand{\injmap}{\hookrightarrow}
\newcommand{\bB}{\mathbf{B}}
\newcommand{\bC}{\mathbf{C}}
\newcommand{\bK}{\mathbf{K}}
\newcommand{\bN}{\mathbf{N}}
\newcommand{\bP}{\mathbf{P}}
\newcommand{\bQ}{\mathbf{Q}}
\newcommand{\bR}{\mathbf{R}}
\newcommand{\bZ}{\mathbf{Z}}
\newcommand{\cF}{\mathcal{F}}
\newcommand{\cG}{\mathcal{G}}
\newcommand{\cO}{\mathcal{O}}
\newcommand{\sO}{\mathscr{O}}
\newcommand{\GL}{\mathrm{GL}}
\newcommand{\Alb}{\operatorname{Alb}}
\newcommand{\Amp}{\mathsf{Amp}}
\newcommand{\Aut}{\operatorname{Aut}}
\newcommand{\Bs}{\operatorname{Bs}}
\newcommand{\CH}{\mathsf{CH}}
\newcommand{\End}{\operatorname{End}}
\newcommand{\id}{\operatorname{id}}
\newcommand{\Mat}{\operatorname{M}}
\newcommand{\Nef}{\mathsf{Nef}}
\newcommand{\N}{\mathsf{N}}
\newcommand{\NS}{\mathsf{NS}}
\newcommand{\num}{\equiv}
\newcommand{\wnum}{\equiv_{\mathsf{w}}}
\newcommand{\pic}{\mathbf{Pic}}
\newcommand{\Pic}{\mathsf{Pic}}
\newcommand{\Qbar}{\overline{\mathbf{Q}}}
\newcommand{\rank}{\operatorname{rank}}
\begin{document}

\title[Canonical heights for abelian group actions]{Canonical heights for abelian group actions of maximal dynamical rank}

\author{Fei Hu}
\address{Department of Mathematics, Nanjing University, Nanjing, China}
\email{\href{mailto:fhu@nju.edu.cn}{\tt fhu@nju.edu.cn}}

\author{Guolei Zhong}
\address{Center for Complex Geometry,
	Institute for Basic Science (IBS),
	55 Expo-ro, Yuseong-gu, Daejeon, 34126, Republic of Korea}
\email{\href{mailto:guolei@ibs.re.kr}{\tt guolei@ibs.re.kr}}
\email{\href{mailto:zhongguolei@u.nus.edu}{\tt zhongguolei@u.nus.edu}}

\begin{abstract}
Let $X$ be a smooth projective variety of dimension $n\geq 2$ and $G\cong\mathbf{Z}^{n-1}$ a free abelian group of automorphisms of $X$ over $\overline{\mathbf{Q}}$.
Suppose that $G$ is of positive entropy.
We construct a canonical height function $\widehat{h}_G$ associated with $G$, corresponding to a nef and big $\mathbf{R}$-divisor, satisfying the Northcott property.
By characterizing its null locus, we prove the Kawaguchi--Silverman conjecture for each element of \(G\). 
As another application, we determine the height counting function for non-periodic points.
\end{abstract}

\subjclass[2020]{
11G50,  
14J50,  
37P15,  
37P30.  
}

\keywords{Canonical height, automorphism, arithmetic degree, dynamical degree, group action}

\thanks{The first author was supported by a grant from Nanjing University (No.~14912209) and a grant from the National Natural Science Foundation of China (No.~13001128). The second author was supported by the Institute for Basic Science (IBS-R032-D1-2023-a00).}

\dedicatory{In memory of Professor Nessim Sibony (1947--2021)}

\date{}

\maketitle

\section{Introduction}
\label{section:intro}

Given a surjective holomorphic self-map \(f\) of a compact K\"ahler manifold \(M\) of dimension \(n\geq 2\), the topological entropy \(h_{\textrm{top}}(f)\) of \(f\) is a key dynamical invariant to measure the divergence of the orbits. 
A fundamental result due to Gromov \cite{Gromov03} and Yomdin \cite{Yomdin87} establishes its equivalence to the \emph{algebraic entropy} \(h_{\mathrm{alg}}(f)\), defined as the logarithm of the spectral radius of the linear pullback operation \(f^*\) acting on the cohomology group \(\bigoplus_{k=0}^{n} H^{k,k}(M,\bC)\):
\[
h_{\mathrm{alg}}(f) \coloneqq \log \max_{0\leq k\leq n} \rho(f^*|_{H^{k,k}(M,\bC)}).
\]
For each \(k\), the quantity \(\rho(f^*|_{H^{k,k}(M,\bC)})\) is called the \emph{\(k\)-th dynamical degree} of \(f\) and denoted by \(\lambda_k(f)\); see \cref{defi:dynamical-degrees} for an equivalent definition for projective \(M\). 
We say that \(f\) is of \emph{positive entropy}, if \(h_{\mathrm{alg}}(f)>0\), or equivalently, if its first dynamical degree \(\lambda_1(f)\) is greater than $1$. 
Denote the full automorphism group of \(M\) by \(\Aut(M)\). 
A subgroup \(G\leq\Aut(M)\) is of \emph{positive entropy}, if all the elements of \(G\backslash \{\id\}\) are of positive entropy.

Dinh and Sibony proved in \cite{DS04} that every abelian subgroup \(G\leq \Aut(M)\) of positive entropy is free of rank \(r(G)\le n-1\). Subsequently, Zhang \cite{Zhang09} established a theorem of Tits type for \(\Aut(M)\) and extended \cite{DS04} to the solvable group case. Since then, algebraic dynamics on systems with maximal dynamical rank has been intensively studied (see, e.g., \cite{Din12,Zhang13,CWZ14,DHZ15,Zhang16,Hu20,Zho22,Zho23}). 

On the other hand, topological dynamics on higher rank abelian group actions has also been investigated for decades and various rigidity theorems hold on such systems (see, e.g., \cite{KS94,KK01,KKRH14}).
However, arithmetic dynamics on these higher rank abelian group actions does not seem to attract as much attention.
In this article, we aim to explore the system of maximal dynamical rank from this perspective. 
Specifically, we construct a canonical height associated with a $G\cong \bZ^{n-1}$-action of positive entropy on an $n$-dimensional smooth projective variety \(X\), extending the works in \cite{Silverman91,Kawaguchi08} from surfaces to higher dimensions.

Throughout the article, unless otherwise stated, we will work over the field $\Qbar$ of algebraic numbers.
Below is our main result; see \cref{thm:A-full} for its more precise form and \S\ref{sub-height} for the terminology involved.

\begin{theorem}
\label{thm:A}
Let \(X\) be a smooth projective variety of dimension \(n\ge 2\) over \(\Qbar\).
Let \(G\cong\bZ^{n-1}\) be a free abelian group of automorphisms of $X$ such that every nontrivial element of \(G\) has positive entropy.
Then there exist a function $\widehat{h}_G$ on $X(\Qbar)$ and a \(G\)-invariant Zariski closed proper subset $Z$ of $X$ such that:
\begin{enumerate}[label=\emph{(\arabic*)}, ref=(\arabic*)]
\item $\widehat{h}_G$ is a Weil height function corresponding to a nef and big $\bR$-divisor on $X$;
\item $\widehat{h}_G$ satisfies the Northcott property on $(X\backslash Z)(\Qbar)$;
\item For any $x\in (X\backslash Z)(\Qbar)$, one has $\widehat{h}_G(x) = 0$ if and only if $x$ is $g$-periodic for any $g\in G$.
\end{enumerate}
\end{theorem}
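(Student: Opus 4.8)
The plan is to combine the cohomological structure of maximal-rank abelian actions with a Tate-type limiting procedure, in the spirit of Call--Silverman, Silverman and Kawaguchi but carried out simultaneously in all $n$ ``eigen-directions''. Because $G\cong\bZ^{n-1}$ has maximal dynamical rank, the structure theory of commuting automorphisms of positive entropy (Dinh--Sibony, Zhang, and its algebraic refinements) supplies nef $\bR$-divisor classes $\theta_1,\dots,\theta_n\in\mathsf{N}^1(X)_\bR$ and homomorphisms $\chi_i\colon G\to\bR_{>0}$ with $g^\ast\theta_i=\chi_i(g)\,\theta_i$ for all $g\in G$ and all $i$, such that $\prod_{i=1}^n\chi_i\equiv 1$, the functionals $\log\chi_1,\dots,\log\chi_n$ on $G\otimes\bR$ are in general position (any $n-1$ of them linearly independent, which reflects the maximality of the rank), $\lambda_1(g)=\max_i\chi_i(g)$ for every $g\in G$, and $\theta_{i_1}\cdots\theta_{i_n}=0$ unless $(i_1,\dots,i_n)$ is a permutation of $(1,\dots,n)$, in which case $\theta_1\cdots\theta_n>0$. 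Expanding $\theta:=\theta_1+\dots+\theta_n$ and discarding nonnegative mixed terms gives $\theta^n=n!\,\theta_1\cdots\theta_n>0$, so $\theta$ is nef and big; I fix $\bR$-divisors $D_i$ with $[D_i]=\theta_i$ and Weil heights $h_{D_i}$.

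Next I would build the eigen-heights. For each $i$ choose $g_i\in G$ in the (nonempty, open) cone where $\theta_i$ is strictly dominant, i.e.\ $\chi_i(g_i)>\chi_j(g_i)$ for $j\ne i$; then $\prod_j\chi_j(g_i)=1$ forces $\chi_i(g_i)=\lambda_1(g_i)>1$. Set
\[
\widehat h_i(x):=\lim_{k\to\infty}\chi_i(g_i)^{-k}\,h_{D_i}\bigl(g_i^k(x)\bigr).
\]
Since $g_i^\ast D_i-\chi_i(g_i)D_i$ is numerically trivial, its Weil height is $O\bigl(h_H^{1/2}\bigr)$ for a fixed ample $H$; together with $h_H\circ g_i^k=O(\lambda_1(g_i)^k)$ and $\lambda_1(g_i)=\chi_i(g_i)$ this yields convergence of the limit and $\widehat h_i=h_{D_i}+O(1)$. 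As every $g\in G$ commutes with $g_i$ and scales $\theta_i$ by $\chi_i(g)$, one checks the functional equations $\widehat h_i\circ g=\chi_i(g)\,\widehat h_i$ for all $g\in G$. I then put $\widehat h_G:=\sum_{i=1}^n\widehat h_i=h_{D_1+\dots+D_n}+O(1)$, a Weil height attached to the nef and big $\bR$-divisor $D_1+\dots+D_n$; this is part~(1), and $\widehat h_G\circ g=\sum_i\chi_i(g)\,\widehat h_i$.

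For the exceptional locus I would take $Z:=\mathbf{B}_+(\theta)=\Null(\theta)$ (equal by Nakamaye's theorem), which is Zariski closed and, as $\theta$ is big, proper. It is $G$-invariant: a $d$-dimensional subvariety $V$ lies in $\Null(\theta)$ iff every monomial $(\theta_{i_1}\cdots\theta_{i_d})\cdot V$ vanishes (each is $\ge 0$ by nefness), and by the projection formula $(\theta_{i_1}\cdots\theta_{i_d})\cdot g(V)=\bigl(\prod_j\chi_{i_j}(g)\bigr)(\theta_{i_1}\cdots\theta_{i_d})\cdot V$, so $g(V)\in\Null(\theta)\iff V\in\Null(\theta)$. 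Bigness of $\theta$ gives $c>0$ and $C$ with $\widehat h_G\ge c\,h_H-C$ on $(X\setminus Z)(\Qbar)$, so Northcott for $h_H$ transfers to $\widehat h_G$ there, proving part~(2). For part~(3): if $x$ is $g$-periodic for every $g\in G$ then $G\cdot x$ is finite, each $h_{D_i}$ is bounded on it, and $\chi_i(g_i)>1$ makes the limit defining $\widehat h_i(x)$ vanish, so $\widehat h_G(x)=0$. Conversely, let $x\in(X\setminus Z)(\Qbar)$ with $\widehat h_G(x)=0$. First, $\widehat h_i(x)\ge 0$ for all $i$: with $g_i$ as above, $\widehat h_G(g_i^k x)=\sum_j\chi_j(g_i)^k\widehat h_j(x)=\chi_i(g_i)^k\bigl(\widehat h_i(x)+o(1)\bigr)$ as $k\to\infty$, which tends to $-\infty$ if $\widehat h_i(x)<0$, contradicting $\widehat h_G\ge -C$ on $X\setminus Z$ (note $g_i^k x\in X\setminus Z$ since $Z$ is $G$-invariant). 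Hence $\widehat h_i(x)=0$ for all $i$, so $\widehat h_i\equiv 0$ on $G\cdot x$, whence $h_{D_1+\dots+D_n}$ is bounded on $G\cdot x\subseteq X\setminus Z$, hence $h_H$ is bounded on $G\cdot x$; since $G\cdot x$ has bounded degree over $\bQ$, Northcott forces it to be finite, i.e.\ $x$ is $g$-periodic for every $g\in G$.

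The genuinely hard input is the structure recalled at the start — the common nef eigenclasses $\theta_i$ with the product relation and the general-position property — which rests on the deep Dinh--Sibony/Zhang theory and is where maximality of the dynamical rank is really used. Within the argument proper, the delicate points are: making $\widehat h_i-h_{D_i}$ genuinely bounded across the gap between $\mathsf{Pic}(X)_\bR$ and $\mathsf{N}^1(X)_\bR$ (handled via the sub-linear growth of heights of numerically trivial divisors, where the identity $\chi_i(g_i)=\lambda_1(g_i)$ is crucial), and the positivity $\widehat h_i\ge 0$ on $X\setminus Z$, which relies on both the $G$-invariance of $Z$ and the strict-dominance device that makes $\theta_i$ the leading term under a suitable $g_i^k$.
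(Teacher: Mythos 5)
Your construction follows the paper's architecture almost exactly (the same common nef eigendivisors $D_i$ and characters $\chi_i$, the same choice of automorphisms $g_i$ with $\chi_i(g_i)=\lambda_1(g_i)$ dominant, the sum $\widehat{h}_G=\sum_i\widehat{h}_i$, and $Z=\bB_+(D)$ with the same invariance argument), but there is one genuine gap, and it sits exactly at the point you flag as ``delicate'' in your last paragraph. You define $\widehat{h}_i$ using only the \emph{numerical} eigenequation $g_i^*D_i\num\chi_i(g_i)D_i$ and claim that the sub-linear growth of heights of numerically trivial divisors yields $\widehat{h}_i=h_{D_i}+O(1)$. It does not. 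Writing $E\coloneqq g_i^*D_i-\chi_i(g_i)D_i$, one only has $h_E=O\bigl(\sqrt{h_{H}^+}\bigr)$, and the telescoping estimate (using $h_H^+(g_i^kx)=O(k^{\rho-1}\lambda_1(g_i)^k\,h_H^+(x))$) gives convergence of the limit together with $\widehat{h}_i=h_{D_i}+O\bigl(\sqrt{h_H^+}\bigr)$ --- the error term is genuinely unbounded in general. This is precisely the phenomenon recorded in \cite[Theorem~5(b)]{KS16} and in \cref{rmk:R-linear}. Consequently your $\widehat{h}_G$ is only known to satisfy $\widehat{h}_G=h_D+O\bigl(\sqrt{h_H^+}\bigr)$, which does not establish assertion~(1) of the theorem (that $\widehat{h}_G$ is a Weil height function corresponding to the nef and big divisor $D$, i.e., $\widehat{h}_G=h_D+O(1)$).

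The missing ingredient is the upgrade of the eigenequation from numerical to $\bR$-linear equivalence: one must replace $D_i$ by $D_i'=D_i-E_i$ with $E_i\in\Pic^0(X)_\bR$ so that $g_i^*D_i'\sim_\bR\lambda_1(g_i)D_i'$, and only then does the Call--Silverman theorem (\cref{thm:nef-can-ht}) give $\widehat{h}_{D_i,g_i}=h_{D_i}+O(1)$. This step is not formal: it requires that $g_i^*\otimes 1-\lambda_1(g_i)\id$ be invertible on the (possibly infinite-dimensional) space $\Pic^0(X)_\bR$, which in turn rests on the spectral bound that all eigenvalues of $g_i^*$ on $H^1(X,\bQ)$ have modulus at most $\sqrt{\lambda_1(g_i)}<\lambda_1(g_i)$ (Dinh's estimate), together with the linear-algebra argument of \cref{lem-operator-infinite-dim} applied to the characteristic polynomial of the induced isogeny of the Picard variety (see \cref{lemma:linear-equiv} and \cref{claim:surjective}). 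Your remaining steps are essentially unaffected: the Northcott transfer in (2), the positivity $\widehat{h}_i\ge 0$ on $X\setminus\bB_+(D)$, and the characterization of the zero locus in (3) all survive even with an $O\bigl(\sqrt{h_H^+}\bigr)$ error (since $\sqrt{h_H^+(g_i^kx)}=o(\chi_i(g_i)^k)$ and $c\,h_H-C\sqrt{h_H^+}\le N$ still bounds $h_H$), so the defect is localized entirely in assertion~(1).
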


We refer to the function \(\widehat{h}_G\) in \cref{thm:A} as \emph{a canonical height function} associated with the abelian group \(G\) of maximal dynamical rank (cf.~\cref{thm:nef-can-ht}). 
Canonical height theory was initially developed by N\'eron and Tate for abelian varieties in the 1960s, and it has proved to be a crucial concept in arithmetic geometry. 
In his pioneering work \cite{Silverman91}, Silverman constructed canonical height functions on certain so-called Wehler K3 surfaces that are defined by the smooth complete intersection of two divisors of type \((1,1)\) and \((2,2)\) in \(\bP^2 \times \bP^2\).
Since then, there has been an extensive body of work on canonical height in arithmetic dynamics (see \cite{GTM241} and references therein).

Before delving into the applications of our \cref{thm:A}, let us provide a brief sketch of the strategy behind its proof. 

\begin{remark}
The idea of our proof of \cref{thm:A} draws significant inspiration from the works of Dinh--Sibony \cite{DS04} and Call--Silverman \cite{CS93}.
To be specific, we first follow the approach in \cite{DS04} to construct $n$ distinguished automorphisms $g_i$ in $G$ and $n$ common nef $\bR$-eigendivisors $D_i$ on $X$.
Each $g_i^*$ expands $D_i$ (up to $\bR$-linear equivalence) and shrinks $D_j$ with $j\neq i$ (up to numerical equivalence).
See \cref{thm:nef-big} for details. 

Furthermore, as established in \cite{CS93}, for each pair $(g_i,D_i)$ as described above, there exists a nef canonical height function $\widehat{h}_{D_i,g_i}$.
As the sum $\sum_i D_i$ forms a nef and big $\bR$-divisor, the corresponding sum $\sum_{i} \widehat{h}_{D_i,g_i}$ of nef canonical height functions turns out to be a Weil height function (denoted by \(\widehat{h}_G\)) satisfying the Northcott property.
See \cref{thm:A-full} for details.
\end{remark}

As the first application of our canonical height, we provide a positive answer to the Kawaguchi--Silverman conjecture under the assumption of maximal dynamical rank. In general, for an arithmetic dynamical system \((X,f)\) over \(\overline{\bQ}\) and a point \(x\in X(\overline{\bQ})\), we use an ample height function to measure the arithmetic complexity of the orbit \(\mathcal{O}_f(x)\).
The guiding principle is that "geometry governs arithmetic" in the sense that the height growth rate along the orbit is controlled by the first dynamical degree.
The conjecture predicts that if \(x\) is sufficiently complicated (e.g., has a Zariski dense orbit), then the height growth achieves the maximum. 
For precise definitions and the statement of the conjecture, we refer the reader to \S\ref{sub-KSC}; see also \cite{DGHLS22} for a higher-dimensional analog.

\begin{corollary}
\label{cor:KSC}
Under the assumption of \cref{thm:A}, for each \(g\in G\), the arithmetic degree \(\alpha_g(x)\) of $g$ at any \(x\in X(\Qbar)\) equals the first dynamical degree \(\lambda_1(g)\) of $g$, whenever the forward $g$-orbit $\cO_g(x)\coloneqq\{g^m(x)\colon m\in \bZ_{\geq 0}\}$ of \(x\) is Zariski dense in $X$.
\end{corollary}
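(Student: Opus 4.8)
The plan is to deduce \cref{cor:KSC} from \cref{thm:A} by the standard squeeze argument relating the arithmetic degree to canonical heights. Fix $g\in G$ and a point $x\in X(\Qbar)$ with $\cO_g(x)$ Zariski dense. The upper bound $\alpha_g(x)\le\lambda_1(g)$ is already known in full generality for automorphisms (by Kawaguchi--Silverman and Matsuzawa, using that $\lambda_1(g)=\lambda_1(g)$ is the relevant dynamical degree for an automorphism and a theorem bounding height growth by $\lambda_1$), so the real content is the lower bound $\alpha_g(x)\ge\lambda_1(g)$; in fact one wants equality, and since the arithmetic degree is known to exist as a limit for automorphisms, it suffices to produce a single ample Weil height $h$ and a constant $c>0$ with $h^+(g^m(x))\ge c\,\lambda_1(g)^m$ for all $m\gg 0$.

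First I would recall from the sketch preceding the corollary (and from \cref{thm:nef-big}) the distinguished elements $g_1,\dots,g_n\in G$ and the common nef $\bR$-eigendivisors $D_1,\dots,D_n$, where $g_i^*D_i\sim_\bR \lambda_1(g_i)\,D_i$ up to the appropriate eigenvalue and $g_i^*D_j\equiv (\text{smaller eigenvalue})\,D_j$ for $j\ne i$, together with the associated nef canonical heights $\widehat h_{D_i,g_i}$ from Call--Silverman \cite{CS93}, which satisfy the functional equation $\widehat h_{D_i,g_i}\circ g_i = \lambda_1(g_i)\,\widehat h_{D_i,g_i}$ (up to normalization), are nonnegative, and differ from a Weil height for $D_i$ by a bounded function. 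The point $x$ lies outside the exceptional locus $Z$ of \cref{thm:A}, since $\cO_g(x)$ is Zariski dense and $Z$ is a $G$-invariant proper closed subset; hence by \cref{thm:A}(3), $\widehat h_G(x)>0$, so $\widehat h_{D_i,g_i}(x)>0$ for at least one index $i$.

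Next, for an arbitrary $g\in G$, I would analyze how $g^*$ acts on the nef classes $D_i$. Writing $g$ in terms of the $\bZ^{n-1}$-structure, the common eigendivisor property gives $g^*D_i\equiv \chi_i(g)\,D_i$ numerically for multiplicative characters (eigenvalue functions) $\chi_i\colon G\to\bR_{>0}$, and the positive-entropy hypothesis on every nontrivial element forces, for each fixed $g\ne\id$, that $\max_i\chi_i(g)=\lambda_1(g)>1$ and correspondingly $\min_i \chi_i(g)=1/\lambda_1(g^{-1})$ — this is the maximal-rank input, essentially the Dinh--Sibony picture that the characters $\log\chi_i$ span and that $\lambda_1$ is the exponential of a norm-like function on $G$. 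Pick an index $i_0$ with $\chi_{i_0}(g)=\lambda_1(g)$. Using the canonical height $\widehat h_{D_{i_0},g_{i_0}}$ together with the boundedness $\widehat h_{D_{i_0},g_{i_0}} = h_{D_{i_0}}+O(1)$ and the fact that $g^*D_{i_0}\equiv\lambda_1(g)D_{i_0}$, one gets $h_{D_{i_0}}(g^m x) = \lambda_1(g)^m\,h_{D_{i_0}}(x)+O(\lambda_1(g)^m/\text{stuff})$; more carefully, I would build a $g$-canonical height directly for the pair $(g,D_{i_0})$ via the telescoping limit $\widehat h_{D_{i_0},g}(y):=\lim_{m\to\infty}\lambda_1(g)^{-m}h_{D_{i_0}}^+(g^m y)$, which converges because $g^*D_{i_0}-\lambda_1(g)D_{i_0}$ is numerically trivial and hence its Weil height is $O(1)$ (Call--Silverman's argument applies verbatim once the eigenvalue equation holds up to numerical equivalence and the relevant class stays nef). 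Then $\widehat h_{D_{i_0},g}$ is nonnegative and satisfies $\widehat h_{D_{i_0},g}(gy)=\lambda_1(g)\widehat h_{D_{i_0},g}(y)$.

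Finally I would show $\widehat h_{D_{i_0},g}(x)>0$, which by the functional equation immediately yields $h_{D_{i_0}}^+(g^m x)\gg \lambda_1(g)^m$ and hence, since $D_{i_0}$ is nef (so dominated by an ample height) — or better, summing over a suitable collection of $D_i$ to get the nef and big class $\sum D_i$ and an honest ample height bound as in \cref{thm:A-full} — the desired lower bound $\alpha_g(x)\ge\lambda_1(g)$; combined with the general upper bound this gives $\alpha_g(x)=\lambda_1(g)$. The nonvanishing $\widehat h_{D_{i_0},g}(x)>0$ is where \cref{thm:A}(3) does the work: if $\widehat h_{D_{i_0},g}(x)=0$ for every valid choice then $\widehat h_G(x)=0$ (the canonical heights for the various pairs are, up to positive scaling and the numerical-equivalence ambiguity, comparable to the summands of $\widehat h_G$ on $X\setminus Z$), forcing $x$ to be periodic for all of $G$ and in particular for $g$, contradicting the Zariski density of $\cO_g(x)$. \textbf{The main obstacle} I anticipate is precisely this comparison step: one must be careful that passing from the fixed distinguished elements $g_i$ to an arbitrary $g\in G$ does not lose positivity — i.e., that the index $i_0$ realizing $\lambda_1(g)$ can be chosen so that $\widehat h_{D_{i_0},g}(x)$ is controlled from below by $\widehat h_G(x)$ rather than being an a priori independent quantity. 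Resolving this requires quantifying the relation between $\widehat h_{D_{i_0},g}$ and $\widehat h_{D_{i_0},g_{i_0}}$ on $X\setminus Z$ (they have the same "direction" $D_{i_0}$ but different dynamics), and checking that the null locus of $\widehat h_{D_{i_0},g}$ on $X\setminus Z$ is contained in the periodic locus of $g$ — which should again follow from the characterization of the null locus of $\widehat h_G$ in \cref{thm:A}, together with the observation that a point outside $Z$ with dense $g$-orbit cannot be $g_i$-periodic for all $i$.
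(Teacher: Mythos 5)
Your overall strategy --- squeezing $\alpha_g(x)$ between the known upper bound $\lambda_1(g)$ and a lower bound coming from positivity of a canonical height at $x$, with that positivity supplied by the null-locus characterization in \cref{thm:A} --- is the same as the paper's. But the step you yourself flag as ``the main obstacle'' is a genuine gap, and it is precisely the point the paper handles by a different device. You propose to build a new canonical height $\widehat{h}_{D_{i_0},g}$ for the pair $(g,D_{i_0})$ and then compare it with the summand $\widehat{h}_{D_{i_0},g_{i_0}}$ of $\widehat{h}_G$. Two problems arise. First, your justification of convergence (``$g^*D_{i_0}-\lambda_1(g)D_{i_0}$ is numerically trivial and hence its Weil height is $O(1)$'') is false: a numerically trivial $\bR$-divisor only has height $O(\sqrt{h_{H}^{+}})$ (\cref{thm:weil-height}\ref{thm:weil-height-alg-equiv}), so Call--Silverman does not apply verbatim; one needs the Kawaguchi--Silverman/Matsuzawa estimate, and the resulting function only satisfies $\widehat{h}_{D_{i_0},g}=h_{D_{i_0}}+O(\sqrt{h_{H}^{+}})$. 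Second, and more seriously, two functions each equal to $h_{D_{i_0}}$ up to $O(\sqrt{h_{H}^{+}})$ need not have the same zero set, and $D_{i_0}$ alone is only nef, not big, so no Northcott-type argument is available for $\widehat{h}_{D_{i_0},g}$ by itself; the deduction ``if $\widehat{h}_{D_{i_0},g}(x)=0$ for every valid choice then $\widehat{h}_G(x)=0$'' is therefore unsupported as written.

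The paper avoids the comparison altogether: \cref{lemma:nef-can-ht}\ref{lemma:nef-can-ht-2} shows that the single function $\widehat{h}_{D_i,g_i}$, built from the distinguished automorphism $g_i$, already satisfies the functional equation $\widehat{h}_{D_i,g_i}\circ g=\chi_i(g)\,\widehat{h}_{D_i,g_i}$ for \emph{every} $g\in G$, not just for $g_i$. Its proof is exactly the iteration your comparison would require: the error $|h_{D_i}\circ g-\chi_i(g)\,h_{D_i}|=O(\sqrt{h_{H_X}^+})$ is propagated along the $g_i$-orbit and killed, after dividing by $\lambda_1(g_i)^m$, by Matsuzawa's growth bound $h_{H_X}^+(g_i^m(x))\leq C\,m^{\rho(X)-1}\lambda_1(g_i)^m\,h_{H_X}^+(x)$. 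With this in hand, \cref{cor:non-periodic} gives directly $h_{H_X}(g^m(x))\geq \sum_{j}\chi_j(g)^m\,\widehat{h}_{D_j,g_j}(x)+O(1)\geq \lambda_1(g)^m\,\widehat{h}_{D_{i_0},g_{i_0}}(x)+O(1)$, and \cref{thm:A-full}\ref{thm:A-full-zero-loci} (the all-or-nothing equivalence among the conditions $\widehat{h}_{D_i,g_i}(x)=0$, which is stronger than your ``at least one index is positive'') guarantees $\widehat{h}_{D_{i_0},g_{i_0}}(x)>0$ for the specific index $i_0$ with $\chi_{i_0}(g)=\lambda_1(g)$. If you prove the analogue of \cref{lemma:nef-can-ht}\ref{lemma:nef-can-ht-2}, your argument closes; without it, the lower bound $\alpha_g(x)\geq\lambda_1(g)$ is not established.
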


\begin{remark}
The Kawaguchi--Silverman conjecture (i.e., \cref{conj:KSC}) has been successfully proved in many cases, leveraging canonical height theory. Notable contributions include works by \cite{Silverman91,CS93,Kawaguchi06,Kawaguchi08,KS14-exa,KS16-abelian,Shibata19,LS21}, among others.
Our approach to proving \cref{cor:KSC} closely follows this well-established strategy.
In comparison, an alternative geometric approach to address \cref{conj:KSC} involves the Equivariant Minimal Model Program, as developed by Meng and Zhang.
See their survey paper \cite{MZ23-survey} and the references therein. 
For the reader interested in this geometric perspective, we recommend exploring \cite{MSS18, Matsuzawa20-Adv, MZ22, MY22, MZ23-ksc}.
For a comprehensive overview of the current state of \cref{conj:KSC}, we refer to a recent survey paper \cite{Mat23-survey} and the references therein.
\end{remark}

Our second application involves an investigation of the bounded height property for periodic points, as well as an exploration of the height counting function for non-periodic points.

\begin{corollary}[{cf.~\cite[Theorem~D]{Kawaguchi08} and \cite[Proposition~3]{KS16}}]
\label{cor:arithmetic-estimate}
Under the assumption of \cref{thm:A} and with notation therein, the following assertions hold for any \(\id \neq g\in G\). 
\begin{enumerate}[label=\emph{(\arabic*)}, ref=(\arabic*)]
\item \label{cor:arithmetic-estimate1} The subset below
\[
\{x\in (X\backslash Z)(\Qbar) : x~\textup{is}~g\textup{-periodic}\}
\]
is of bounded height (see \cref{defn-bounded-height}).
\item \label{cor:arithmetic-estimate2} For any \(x\in (X\backslash Z)(\Qbar)\) which is not \(g\)-periodic, one has
\[
\lim_{T\to+\infty}\frac{\#\{ m\in \bZ_{\geq 0} : h_{H_X}(g^m(x)) \leq T \}}{\log T} = \frac{1}{\log\lambda_1(g)}.
\]
\end{enumerate}
\end{corollary}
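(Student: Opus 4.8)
The plan is to deduce both assertions directly from the properties of the canonical height $\widehat{h}_G$ furnished by Theorem \ref{thm:A}, together with standard height machinery. For part \ref{cor:arithmetic-estimate1}, I would observe that if $x \in (X\backslash Z)(\Qbar)$ is $g$-periodic for some $\id \neq g \in G$, then since $Z$ is $G$-invariant, the whole orbit $\cO_g(x)$ stays in $(X\backslash Z)(\Qbar)$; moreover $g$-periodicity of a point of $X\backslash Z$ is equivalent to $g'$-periodicity for every $g' \in G$ by part (3) of Theorem \ref{thm:A} (this is exactly where the ``null locus'' characterization is used), so $\widehat{h}_G(x) = 0$. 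Since $\widehat{h}_G$ is a Weil height for a fixed nef and big $\bR$-divisor $D = \sum_i D_i$, and an ample divisor $H_X$ can be written as $H_X = D + (H_X - D)$ with $H_X - D$ some $\bR$-divisor, we get $h_{H_X}(x) = \widehat{h}_G(x) + h_{H_X - D}(x) + O(1) = h_{H_X-D}(x) + O(1)$ on the set of $g$-periodic points in $X\backslash Z$. The point is that on this set the ample height is controlled, up to $O(1)$, by a height of strictly smaller ``size''; iterating the decomposition $D_i$ versus the complementary classes and using that each $\widehat{h}_{D_i,g_i} \geq 0$ vanishes on periodic points, one concludes the ample height is bounded on this set, i.e.\ it is of bounded height in the sense of Definition \ref{defn-bounded-height}. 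The cleanest route is probably: for $x$ a $g$-periodic point in $X\backslash Z$, all $\widehat{h}_{D_i,g_i}(x) = 0$, hence $h_{D}(x) = O(1)$, and since $D$ is big, $h_{H_X}(x) \leq C\, h_D(x) + O(1) = O(1)$ on $X \backslash \Bs$ of some multiple of $D$ minus $H_X$ — one must be slightly careful that the comparison $h_{H_X} \ll h_D + O(1)$ holds only outside an auxiliary closed set, which can be absorbed into $Z$ (or handled by noting the corollary is stated on $X\backslash Z$ already, possibly after enlarging $Z$).

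For part \ref{cor:arithmetic-estimate2}, fix $x \in (X\backslash Z)(\Qbar)$ that is not $g$-periodic. Then $\widehat{h}_G(x) > 0$ by Theorem \ref{thm:A}(3). The strategy is to show that $h_{H_X}(g^m(x))$ grows like $\lambda_1(g)^m$ (more precisely, $\log h_{H_X}(g^m(x)) = m \log\lambda_1(g) + O(1)$ or at least is pinched between $c_1 \lambda_1(g)^m$ and $c_2 \lambda_1(g)^m$ for positive constants), and then the counting asymptotic follows by an elementary computation: $\#\{m \geq 0 : c\,\lambda^m \leq T\} = \frac{\log T}{\log\lambda} + O(1)$. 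To get the two-sided bound on $h_{H_X}(g^m(x))$, I would use the eigendivisor structure from Theorem \ref{thm:nef-big}: writing $g = \sum a_i g_i$-ish in coordinates (really, using that $g^*$ acts on the span of the $D_i$ with the appropriate spectral behavior), the dominant growth of $h_{D_i}(g^m(x))$ is governed by the top eigenvalue of $g^*$ on $\bigoplus \bR D_i$, which is $\lambda_1(g)$. Concretely, the functional equation $\widehat{h}_{D_i,g_i} \circ g_i = \lambda_1(g_i)\, \widehat{h}_{D_i,g_i}$ (the defining property of the Call--Silverman nef canonical height) and its analogue for the commuting action of a general $g \in G$ give $\widehat{h}_{D_i,g_i}(g^m x) = (\text{eigenvalue})^m \widehat{h}_{D_i, g_i}(x)$ up to controlled error; summing and using positivity plus that $\widehat{h}_G(x) > 0$ pins the growth rate to $\lambda_1(g)$. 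Finally, comparing $\widehat{h}_G$ with $h_{H_X}$ via nefness and bigness of $D$ (so $h_D \ll h_{H_X} \ll h_D$ up to $O(1)$ and a closed set inside $Z$) transfers the growth rate to $h_{H_X}(g^m(x))$, and the limit in \ref{cor:arithmetic-estimate2} drops out.

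I expect the main obstacle to be part \ref{cor:arithmetic-estimate2}: establishing the \emph{precise} exponential growth rate $\lambda_1(g)$ — not merely that the arithmetic degree exists and equals $\lambda_1(g)$ (which is Corollary \ref{cor:KSC}), but the sharper statement that $h_{H_X}(g^m(x))$ is trapped between two positive constant multiples of $\lambda_1(g)^m$, with no subexponential ambiguity, for \emph{every} non-$g$-periodic $x \in X\backslash Z$ (in particular, even when $\cO_g(x)$ is not Zariski dense). The upper bound $\limsup \frac{1}{m}\log h_{H_X}(g^m x) \leq \log\lambda_1(g)$ is general and follows from the theory of arithmetic degrees (Kawaguchi--Silverman / Matsuzawa), but the matching lower bound is where the canonical height $\widehat{h}_G$ and its non-vanishing on $X\backslash Z$ do the essential work: one needs $\widehat{h}_G$ to ``see'' the $\lambda_1(g)$-expansion, which should follow from the fact that $\sum_i D_i$ is big so that some $D_i$ has $g^*$-eigenvalue equal to $\lambda_1(g)$ and the corresponding $\widehat{h}_{D_i,g_i}(x)$ component is positive — here one must check that bigness forces the relevant spectral component of $x$ under $\widehat h_G$ to be nonzero, which is precisely the content of the null-locus characterization and is the technical heart of the argument. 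Once that quantitative growth statement is in hand, both the bounded-height claim (as the degenerate case ``growth rate $= 0$'') and the counting asymptotic are routine.
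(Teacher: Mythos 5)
Your proposal is correct and follows essentially the same route as the paper: for (1), periodicity forces $\widehat{h}_G(x)=0$, hence $h_D(x)=O(1)$, and bigness of $D$ transfers this to $h_{H_X}$; for (2), one combines $\alpha_g(x)=\lambda_1(g)$ (\cref{cor:non-periodic}) with the counting lemma of \cite[Proposition~3]{KS16}. Two of the obstacles you flag dissolve on inspection. First, the ``auxiliary closed set'' in the comparison $h_{H_X}\leq \varepsilon^{-1}h_D+O(1)$ is exactly $\bB_+(D)=Z$: by \cref{prop:B_+}\ref{prop:B_+-modification} one can choose $\varepsilon>0$ with $D-\varepsilon H_X$ effective (a $\bQ$-divisor) and $\bB_+(D)=\Bs(M(D-\varepsilon H_X))$, so $h_{D-\varepsilon H_X}\geq O(1)$ on $X\backslash Z$ and no enlargement of $Z$ is needed (enlarging it would weaken the stated corollary). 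Second, no two-sided pinching $c_1\lambda_1(g)^m\leq h_{H_X}(g^m(x))\leq c_2\lambda_1(g)^m$ is required for (2): the mere existence of the limit $\alpha_g(x)=\lim h_{H_X}^+(g^m(x))^{1/m}=\lambda_1(g)>1$ already yields the counting asymptotic (this is precisely \cite[Proposition~3]{KS16}), and that $\alpha_g(x)=\lambda_1(g)$ for \emph{every} non-$g$-periodic $x\in X\backslash Z$ --- dense orbit or not --- is exactly what \cref{cor:non-periodic} provides via the positivity of $\widehat{h}_{D_i,g_i}(x)$ for the index $i$ with $\chi_i(g)=\lambda_1(g)$, as you anticipated.
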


This article is organized as follows.
In \Cref{section:prelim}, we provide a review of essential concepts and results, including weak numerical equivalence, stable base loci, Weil's height theory, and dynamical/arithmetic degrees.
In \Cref{section:construction}, we proceed to construct $n$ distinguished automorphisms $g_i$ in $G$ and a nef and big \(\bR\)-divisor as a sum of common nef eigendivisors $D_i$, possessing favorable properties from an arithmetic perspective (see \cref{thm:nef-big,rmk_DS_eigendivisor}).
Following this, in \Cref{section:can-ht}, we define a canonical height function associated with the abelian group \(G\) of maximal dynamical rank.
Finally, we present the proofs of \cref{thm:A,cor:KSC,cor:arithmetic-estimate}.

We conclude the introduction with the following remark.

\begin{remark}[About the generalization of \cref{thm:A}]
It is noteworthy that extending our main result to normal projective varieties over \(\Qbar\) poses no essential difficulties, using intersection theory and Weil's height theory on \(\bR\)-Cartier divisors.
Furthermore, the extension of our main result to global fields in positive characteristic is also possible using the language in \cite{Tru20,Dang20,Hu20}, along with the result from \cite{Hu-lc}.
However, given that the primary focus of this article is to present a distinctive perspective for studying abelian group actions of maximal dynamical rank, we choose to concentrate on smooth projective varieties over \(\Qbar\).
This decision is made in the interest of maintaining clarity and simplicity in our exposition.
We anticipate that this alternative viewpoint may provide valuable insights into the classification problem of abelian group actions with lower dynamical rank (see \cite[Problem~1.5]{Din12}).
\end{remark}

\section*{Acknowledgements}

The authors would like to express their gratitude to the organizers of the conference "Recent Developments in Algebraic Geometry, Arithmetic and Dynamics Part 2" held in Singapore in August 2023, for providing a platform for fruitful discussions.
The first author extends special thanks to the Fields Institute for its warm hospitality during a visit in Fall 2023.

\section{Preliminaries}
\label{section:prelim}

We start with notation and terminology. 
Let \(X\) be a smooth projective variety of dimension $n$ over \(\Qbar\).
The symbols \(\sim\), \(\sim_\bQ\), \(\sim_\bR\), \(\approx\), and \(\equiv\) stand for $\bZ$-, $\bQ$-, $\bR$-linear equivalence, algebraic equivalence, and numerical equivalence, respectively.

For any $0\leq i\leq n$, denote by $\CH^i(X)$ the Chow group of algebraic cycles of codimension~$i$ on $X$ modulo linear equivalence.
It is well-known in intersection theory that $\CH(X) \coloneqq \bigoplus_{i=0}^n \CH^i(X)$ is a graded commutative ring with respect to the intersection product, called the Chow ring.
When working with a coefficient field $\bK = \bQ$ or $\bR$, we write
\[
\CH^i(X)_\bK \coloneqq \CH^i(X) \otimes_\bZ \bK.
\]
In particular, when $i=1$, the Chow group $\CH^1(X)$ coincides with the Picard group $\Pic(X)$ of $X$. 
Denote by $\Pic^0(X)$ the subgroup of $\Pic(X)$ consisting of all integral divisors on $X$ algebraically equivalent to zero (modulo linear equivalence);
it has a structure of an abelian variety.
The quotient group
\[
\NS(X)\coloneqq\Pic(X)/\Pic^0(X)
\]
is called the \emph{N\'eron--Severi group} of \(X\), which is a finitely generated abelian group.

Let \(\pic^0_{X/\Qbar}\) denote the Picard variety of $X$ over \(\Qbar\), i.e., the neutral connected component of the Picard group scheme \(\pic_{X/\Qbar}\) of $X$ over \(\Qbar\);
it is also the dual abelian variety of the Albanese variety \(\Alb(X)\) of $X$ (see \cite[Theorem~9.5.4 and Remark~9.5.25]{Kle05}).
Note that the group of the $\Qbar$-points of the Picard variety \(\pic^0_{X/\Qbar}\) is exactly $\Pic^0(X)$.

For any $0\le i\le n$, denote by $\N^i(X)$ the finitely generated free abelian group of algebraic cycles of codimension~$i$ on $X$ modulo numerical equivalence, i.e.,
\[
\N^i(X) \coloneqq \CH^i(X)/\!\num.
\]
For $\bK=\bQ$ or $\bR$, denote by $\N^i(X)_\bK \coloneqq \N^i(X) \otimes_\bZ \bK$ the associated finite-dimensional $\bK$-vector space.
It is also well-known that when $i=1$,
\[
\N^1(X)_\bK = \NS(X)_\bK \coloneqq \NS(X) \otimes_\bZ \bK.
\]
The \emph{Picard number} $\rho(X)$ of $X$ is defined as the rank of $\N^1(X)$ or $\dim_\bR \N^1(X)_\bR$.
We endow $\N^1(X)_\bR$ with the standard Euclidean topology and we fix a norm $\norm{\cdot}$ on it.

A divisor $D$ on $X$ is \emph{nef}, if the intersection number $D\cdot C$ is nonnegative for any curve $C$ on $X$.
The cone of all nef $\bR$-divisors in $\N^1(X)_\bR$ is called the \emph{nef cone} $\Nef(X)$ of $X$, which is a salient closed convex cone of full dimension.
Its interior is called the \emph{ample cone} $\Amp(X)$ of $X$.
An integral divisor $D$ on $X$ is \emph{big}, if the linear system $|mD|$ of some multiple of \(D\) induces a birational map $\Phi_{|mD|}$ from $X$ onto its image.
An $\bR$-divisor $D$ is \emph{big}, if it is a positive combination of integral big divisors.
It is well-known that for any nef $\bR$-divisor $D$ on $X$, it is big if and only if $D^n>0$.

\subsection{Weak numerical equivalence}

It turns out to be convenient to consider the following notion (implicitly) used in Dinh--Sibony \cite{DS04} and Zhang \cite{Zhang09}.

\begin{definition}[Weak numerical equivalence]
\label{def:weak-numerical-equiv}
Let \(X\) be a smooth projective variety of dimension $n$ over \(\Qbar\).
An algebraic cycle $Z$ of codimension~$i$ on $X$ is called {\it weakly numerically trivial} and denoted by $Z\wnum 0$, if
\[
Z\cdot H_1\cdots H_{n-i} = 0
\]
for all ample (and hence for all) \(\bR\)-divisors $H_1,\ldots,H_{n-i}$ on $X$.
\end{definition}

Clearly, weak numerical equivalence is coarser than numerical equivalence.
An important property of weak numerical equivalence, crucial for deducing the nonvanishing of intersection numbers of divisors, is the following result due to Dinh and Sibony.

\begin{lemma}[{cf.~\cite[Lemme~4.4]{DS04}}]
\label{lemma:DS-4.4}
Let $D_1,\dots,D_j,D'_j$ be nef $\bR$-divisors on $X$ with $1\leq j\leq n-1$ such that $D_1 \cdots D_{j-1}\cdot D_j \not\wnum 0$ and $D_1 \cdots D_{j-1}\cdot D'_j \not\wnum 0$.
Let $f$ be an automorphism of $X$ such that
\begin{align*}
f^*(D_1 \cdots D_{j-1}\cdot D_j) & \wnum \lambda D_1 \cdots D_{j-1}\cdot D_j \ \text{ and} \\
f^*(D_1 \cdots D_{j-1}\cdot D'_j) & \wnum \lambda' D_1 \cdots D_{j-1}\cdot D'_j
\end{align*}
with positive real numbers $\lambda \neq \lambda'$.
Then $D_1 \cdots D_{j-1}\cdot D_j\cdot D'_j \not\wnum 0$.
\end{lemma}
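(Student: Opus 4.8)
The hypotheses give us two nef $\bR$-divisors $D_j, D'_j$ that, after intersecting with the fixed nef product $P \coloneqq D_1 \cdots D_{j-1}$, are both weakly numerically nontrivial and are scaled by distinct positive factors $\lambda \ne \lambda'$ under $f^*$. The conclusion we want, $P \cdot D_j \cdot D'_j \not\wnum 0$, is a statement about a single intersection number against an appropriate tuple of ample classes, so I would fix ample $\bR$-divisors $H_1, \dots, H_{n-j-1}$ on $X$ (there are $n - (j+1)$ of them, matching the codimension $j+1$ of $P \cdot D_j \cdot D'_j$) and set $Q \coloneqq P \cdot H_1 \cdots H_{n-j-1}$, a weakly-positive class of the right dimension so that $Q \cdot D_j$, $Q \cdot D'_j$, and $Q \cdot D_j \cdot D'_j$ are all honest real numbers. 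The point is that $Q \cdot D_j > 0$ and $Q \cdot D'_j > 0$: indeed $P \cdot D_j \not\wnum 0$ means $P \cdot D_j \cdot H_1 \cdots H_{n-j-1}$ is nonzero for \emph{some} choice of amples, but since $P \cdot D_j$ is a limit of effective-type classes (a product of nef classes, hence in the closure of the cone generated by effective cycles) its intersection with amples is $\ge 0$ and, being not weakly trivial, is $> 0$ for \emph{every} choice of amples — this monotonicity/positivity for nef products is the standard input. Thus I may and do choose the $H_i$ freely.

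**The main computation.** Now I would run the classic two-sided estimate. Since $f$ is an automorphism, $f^*$ preserves intersection numbers, so $Q \cdot D_j \cdot D'_j = f^*Q \cdot f^*D_j \cdot f^*D'_j$ — but $Q$ is built from $P$ and amples, and $f^*P \wnum \lambda'' P$ is \emph{not} assumed; instead I should push the eigenvalue structure through differently. The cleaner route, following Dinh--Sibony's Lemme~4.4 and its descendants (e.g. the proof that mixed degrees are multiplicative), is: suppose for contradiction that $Q \cdot D_j \cdot D'_j = 0$. Because $D_j, D'_j$ are nef and $Q$ is a weakly positive class, the bilinear form $(\alpha, \beta) \mapsto Q \cdot \alpha \cdot \beta$ is positive semidefinite on the span of nef classes (Hodge-index-type / Teissier--Khovanskii inequality: $(Q \cdot D_j \cdot D'_j)^2 \ge (Q \cdot D_j^2)(Q \cdot D'_j^2)$, together with the analogous mixed inequalities). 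Vanishing of $Q \cdot D_j \cdot D'_j$ together with $Q\cdot D_j>0$ then forces, via Cauchy--Schwarz for this semidefinite form, that $Q \cdot D'^2_j = 0$ and more: $D'_j$ lies in the kernel direction relative to $D_j$, so that $Q \cdot D_j \cdot (a D_j + b D'_j)$ depends only on $a$. Apply $f^*$: $f^*(Q \cdot D_j \cdot D'_j)$ and $f^*(Q \cdot D_j \cdot D_j)$ transform with factors involving $\lambda$ on the $D_j$ slots and $\lambda'$ on the $D'_j$ slot, while $f^*Q$ transforms with whatever factor $c>0$ it has from the $P$-part (and the amples can be reabsorbed since any ample class is numerically a positive combination modulo the cone — more carefully, one replaces $H_i$ by $f^*H_i$, still ample, which is legitimate by the freedom established above). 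Comparing the two transformation laws — $c\lambda^2$ versus $c\lambda\lambda'$ — and using $\lambda \ne \lambda'$ against the rigidity forced by the vanishing yields the contradiction, exactly as in \cite[Lemme~4.4]{DS04}. Hence $Q \cdot D_j \cdot D'_j > 0$, i.e. $P \cdot D_j \cdot D'_j \not\wnum 0$.

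**The expected obstacle.** The delicate point is bookkeeping the $f^*$-action: the hypotheses only control $f^*$ on the specific classes $P \cdot D_j$ and $P \cdot D'_j$ (weak numerical equivalence, not $\bR$-linear equivalence of divisors), so I cannot naively say $f^* D_i \wnum \mu_i D_i$ for individual divisors. I must phrase everything at the level of the codimension-$j$ classes that actually appear, keep the auxiliary amples generic enough that the nonvanishing "$\not\wnum 0$" upgrades to strict positivity of the chosen numerical pairing, and make sure the Teissier-type inequality I invoke is valid for \emph{weakly} positive (nef-product) classes and not just genuine nef divisors — this should follow by a limiting/continuity argument from the classical Khovanskii--Teissier inequalities for nef divisors, approximating $Q$ by products of ample classes. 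Modulo that continuity check, the argument is the standard Dinh--Sibony eigenvalue-separation trick, and I expect no further surprises.
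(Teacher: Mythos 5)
The paper offers no proof of this lemma at all: it is quoted directly from \cite[Lemme~4.4]{DS04}, so your sketch has to be measured against Dinh--Sibony's argument. Your overall strategy --- argue by contradiction, reduce to intersection numbers against $Q\coloneqq D_1\cdots D_{j-1}\cdot H_1\cdots H_{n-j-1}$ for ample $H_i$, invoke a Khovanskii--Teissier/Hodge-index inequality, and play the two eigenvalues off against each other --- is indeed the right one, and your preliminary point that $Q\cdot D_j>0$ for \emph{every} choice of amples once $D_1\cdots D_{j-1}\cdot D_j\not\wnum 0$ is correct (one ample tuple dominates a positive multiple of another).

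There are, however, two genuine gaps in the execution. First, the form $(\alpha,\beta)\mapsto Q\cdot\alpha\cdot\beta$ is \emph{not} positive semidefinite on the span of the nef classes; by the mixed Hodge index theorem it has Lorentzian-type signature, which is exactly why the inequality you write, $(Q\cdot D_j\cdot D'_j)^2\ge (Q\cdot D_j^2)(Q\cdot D_j'^{2})$, points in the direction \emph{opposite} to Cauchy--Schwarz. The argument needs this reversed direction (so that vanishing of the mixed term forces one of the self-intersections to vanish and, in the equality case, a linear relation); taken literally, the ``positive semidefinite / Cauchy--Schwarz'' framing would destroy the proof. Second, and more seriously, your concluding comparison of ``$c\lambda^2$ versus $c\lambda\lambda'$'' presupposes that $f^*$ scales $Q$ (equivalently $D_1\cdots D_{j-1}$) by some factor $c$ and scales $D_j$, $D'_j$ individually by $\lambda$, $\lambda'$. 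None of this is hypothesized --- you correctly flag it as ``the delicate point,'' but the computation you then run does not avoid it. The way Dinh--Sibony close the argument is to upgrade the assumed vanishing, via the equality case of the Hodge-index-type inequalities for products of nef classes, to a weak-numerical proportionality
\[
s\, D_1\cdots D_{j-1}\cdot D_j \;\wnum\; t\, D_1\cdots D_{j-1}\cdot D'_j
\]
with $s,t>0$ (both nonzero precisely because neither class is weakly trivial), and then apply $f^*$ to this single relation: $s\lambda\, D_1\cdots D_{j-1}\cdot D_j \wnum t\lambda'\, D_1\cdots D_{j-1}\cdot D'_j \wnum s\lambda'\, D_1\cdots D_{j-1}\cdot D_j$, forcing $\lambda=\lambda'$ --- a contradiction obtained using only the eigen-equations for the codimension-$j$ classes that are actually given. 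Establishing that proportionality is the real content of the lemma, and it is the step missing from your proposal.
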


\subsection{Stable and augmented base loci}

In the course of our construction of a canonical height function associated with an abelian group $G$ of automorphisms of $X$, we actually first construct a nef and big $\bR$-divisor on $X$.
In dealing with the height functions of $\bR$-divisors, it becomes necessary to consider the so-called augmented base loci of \(\bR\)-divisors.

\begin{definition}[Augmented base loci]\label{defn-augmented-bl}
Let \(X\) be a smooth projective variety over $\Qbar$.
The \emph{stable base locus} $\bB(D)$ of a \(\bQ\)-divisor \(D\) is the Zariski closed subset of \(X\) defined by
\[
\bB(D)\coloneqq \bigcap_{m\geq 1, \, mD \text{~is Cartier}}\Bs(mD),
\]
where \(\Bs(mD)\) denotes the base locus of the linear system \(|mD|\).
It is an elementary fact that there is an $M\geq 1$ such that $\bB(D) = \Bs(MD)$.

The \emph{augmented base locus} $\bB_+(D)$ of an \(\bR\)-divisor \(D\) is the Zariski closed subset of \(X\) defined by
\[
\bB_+(D)\coloneqq \bigcap_{A} \bB(D-A),
\]
where the intersection is taken over all ample $\bR$-divisors \(A\) such that $D-A$ is a $\bQ$-divisor.
\end{definition}

For a detailed study of augmented base loci, we direct the reader to \cite{ELMNP06} and references therein.
Here we only state a few of them which will be utilized in the proof of \cref{thm:A} and its corollaries.

\begin{proposition}[{cf.~\cite[Propositions~1.4 and 1.5, Example~1.7]{ELMNP06}}]
\label{prop:B_+}
Let \(X\) be a smooth projective variety over \(\Qbar\). 
Then the following assertions hold.
\begin{enumerate}[label=\emph{(\arabic*)}, ref=(\arabic*)]
\item \label{prop:B_+-big-iff-nx} For any \(\bR\)-divisor \(D\) on \(X\), it is big if and only if \(\bB_+(D)\neq X\). 
\item \label{prop:B_+-num} If $D_1$ and $D_2$ are numerically equivalent $\bR$-divisors on $X$, then $\bB_+(D_1) = \bB_+(D_2)$.
\item \label{prop:B_+-modification}For any $\bR$-divisor $D$ on $X$, there is a positive number $\varepsilon$ such that for any ample $\bR$-divisor $A$ with $\norm{A}\leq \varepsilon$ and such that $D-A$ is a $\bQ$-divisor, \(\bB_+(D) = \bB(D-A)\).
\end{enumerate}
\end{proposition}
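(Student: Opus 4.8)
My plan is to prove the three parts in the order \textup{(2)}, \textup{(3)}, \textup{(1)}: part \textup{(2)} is formal, part \textup{(3)} is the technical core, and part \textup{(1)} follows quickly from \textup{(3)}. For \textup{(2)}, set $N\coloneqq D_1-D_2$, so $N\equiv 0$; given any ample $\bR$-divisor $A$ with $D_1-A$ a $\bQ$-divisor, the class $B\coloneqq A-N$ satisfies $B\equiv A$ and hence is again an ample $\bR$-divisor (ampleness depends only on the numerical class), while $D_2-B=D_1-A$ is a $\bQ$-divisor. By the definition of $\bB_+$ this gives $\bB_+(D_2)\subseteq \bB(D_2-B)=\bB(D_1-A)$; intersecting over all admissible $A$ yields $\bB_+(D_2)\subseteq\bB_+(D_1)$, and exchanging $D_1$ and $D_2$ gives equality. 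I expect no obstacle here.

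For \textup{(3)}, the key elementary fact is that $\bB(P+B)\subseteq\bB(P)$ whenever $P$ is a $\bQ$-divisor and $B$ an ample $\bQ$-divisor: off $\bB(P)$ a sufficiently divisible multiple $NP$ has a global section nonvanishing at the chosen point and $\Bs(NP)=\bB(P)$, while $NB$ may be taken globally generated, so multiplying the two sections shows the point lies outside $\Bs(N(P+B))\supseteq\bB(P+B)$. Now fix $D$ and let $\mathcal{A}$ be the set of ample $\bR$-divisors $A$ with $D-A$ a $\bQ$-divisor; $\mathcal{A}$ is nonempty, since one can subtract from $D$ a tiny $\bR$-divisor cancelling its irrational part together with a small ample $\bQ$-divisor. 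For $A,A'\in\mathcal{A}$, choose $A''\in\mathcal{A}$ with $\norm{A''}$ so small that $A-A''$ and $A'-A''$ are ample (these differences are automatically $\bQ$-divisors, being differences of two divisors of the form $D-\bullet$); applying the key fact with $P=D-A$ and $B=A-A''$, and likewise for $A'$, gives $\bB(D-A'')\subseteq\bB(D-A)\cap\bB(D-A')$, so $\{\bB(D-A):A\in\mathcal{A}\}$ is downward directed. By Noetherianity of $X$ it has a smallest element $\bB(D-A_0)$, which is therefore contained in every $\bB(D-A)$ and hence equals $\bigcap_{A\in\mathcal{A}}\bB(D-A)=\bB_+(D)$. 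Finally, pick $\varepsilon>0$ small enough that $A_0-A$ is ample whenever $\norm{A}\le\varepsilon$; for such $A\in\mathcal{A}$ the decomposition $D-A=(D-A_0)+(A_0-A)$ and the key fact give $\bB(D-A)\subseteq\bB(D-A_0)=\bB_+(D)$, while the reverse inclusion is automatic, so $\bB(D-A)=\bB_+(D)$. This directedness-and-stabilization step, together with the bookkeeping needed to absorb the irrational part of $D$ into ample perturbations while keeping the relevant classes ample and the relevant differences $\bQ$-divisors, is where I expect the real work to lie.

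For \textup{(1)}: if $\bB_+(D)\neq X$, then by \textup{(3)} there is a small ample $\bR$-divisor $A$ with $D-A$ a $\bQ$-divisor and $\bB(D-A)=\bB_+(D)\neq X$, so a positive multiple of $D-A$ is linearly equivalent to an effective divisor, giving $D\sim_\bR A+E$ with $E$ effective, whence $D$ is big. Conversely, if $D$ is big, write $D\sim_\bR A_0+E_0$ with $A_0$ ample and $E_0\ge0$; since the big cone is open, one may subtract a small ample $\bR$-divisor $A$ chosen to cancel the irrational part of $D$ (just as in the nonemptiness of $\mathcal{A}$ above) so that $D-A$ is a big $\bQ$-divisor, and then $|m(D-A)|\neq\emptyset$ for large divisible $m$, so $\bB(D-A)\neq X$ and, by \textup{(3)} again, $\bB_+(D)=\bB(D-A)\neq X$.
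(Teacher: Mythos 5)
The paper gives no proof of this proposition — it is quoted directly from \cite{ELMNP06} — and your argument is a correct, self-contained reconstruction of the standard proof there: the monotonicity $\bB(P+B)\subseteq\bB(P)$ for $P$ a $\bQ$-divisor and $B$ an ample $\bQ$-divisor, the directedness-plus-Noetherian-stabilization argument for (3), and the deduction of (1) and (2). The only step taken on faith is the standard characterization that an $\bR$-divisor $\bR$-linearly equivalent to (ample)\,$+$\,(effective) is big, which is acceptable here.
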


Lesieutre and Satriano \cite{LS21} observed that for two nef $\bR$-divisors $D_1,D_2$ on $X$, the augumented base locus of $\bB_+(a_1D_1 + a_2D_2)$ is independent of the positive coefficients $a_1$ and $a_2$.
By induction, we can easily deduce the following.

\begin{lemma}[{cf.~\cite[Lemma~2.16]{LS21}}]
\label{lemma:LS-2.16}
Let $D_1,\dots,D_m$ be nef $\bR$-divisors on $X$.
Then for any $a_1,\dots,a_m>0$, one has
\[
\bB_+(a_1D_1 + \dots + a_mD_m) = \bB_+(D_1 + \dots + D_m).
\]
\end{lemma}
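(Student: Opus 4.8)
The statement to prove is \Cref{lemma:LS-2.16}: for nef $\bR$-divisors $D_1,\dots,D_m$ on $X$ and any $a_1,\dots,a_m>0$, one has $\bB_+(a_1D_1+\dots+a_mD_m)=\bB_+(D_1+\dots+D_m)$.

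The plan is to reduce everything to the two-divisor case, which is exactly \cite[Lemma~2.16]{LS21} and is quoted as available, and then run an induction on $m$. The base case $m=1$ reads $\bB_+(a_1D_1)=\bB_+(D_1)$, which is immediate from \Cref{prop:B_+}\ref{prop:B_+-num} since $a_1D_1\equiv D_1$ is false in general — so instead I should note it follows directly from the definition of $\bB_+$ (scaling an $\bR$-divisor by a positive constant does not change which ample $\bR$-divisors can be subtracted, up to rescaling), or simply from the cited two-divisor statement applied degenerately. The case $m=2$ is the quoted Lesieutre--Satriano observation verbatim.

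For the inductive step, assume the claim holds for $m-1$ nef $\bR$-divisors. Given $a_1,\dots,a_m>0$, first group the last two terms: since $D_{m-1}$ and $D_m$ are nef, so is $D_{m-1}+D_m$, and the two-divisor case gives
\[
\bB_+\bigl(a_{m-1}D_{m-1}+a_mD_m\bigr)=\bB_+\bigl(D_{m-1}+D_m\bigr).
\]
However, what I actually need is to replace the pair $(a_{m-1}D_{m-1},a_mD_m)$ inside a larger sum. The clean way: apply the $m=2$ case to the two nef divisors $D':=a_1D_1+\dots+a_{m-1}D_{m-1}$ and $D'':=a_mD_m$ — wait, I need the coefficients in front to be arbitrary positive reals, but here they are $1$ and $1$; that is fine because $D'$ and $D''$ are themselves nef. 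This yields $\bB_+(D'+D'')=\bB_+(D'+D'')$, trivially, so that is not quite the right regrouping either. The correct approach is: by the $m=2$ case applied to nef divisors $a_1D_1+\dots+a_{m-1}D_{m-1}$ and $D_m$ with coefficients $1$ and $a_m$,
\[
\bB_+(a_1D_1+\dots+a_{m-1}D_{m-1}+a_mD_m)=\bB_+\bigl((a_1D_1+\dots+a_{m-1}D_{m-1})+D_m\bigr),
\]
and then by the $m=2$ case again (coefficients $1$ and $1$ are positive) this equals $\bB_+\bigl(b(a_1D_1+\dots+a_{m-1}D_{m-1})+D_m\bigr)$ for any $b>0$; choosing $b$ and rescaling, together with the inductive hypothesis $\bB_+(a_1D_1+\dots+a_{m-1}D_{m-1})=\bB_+(D_1+\dots+D_{m-1})$ applied after using the $m=2$ case to absorb the scalar, collapses the whole thing to $\bB_+(D_1+\dots+D_{m-1}+D_m)$.

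The only genuine subtlety — and the step I would be most careful about — is bookkeeping the scalars so that at each stage I am legitimately invoking the $m=2$ statement with \emph{nef} inputs and \emph{positive} coefficients; there is no deep obstacle, since nefness is preserved under nonnegative combinations and the cited lemma already does the analytic work (approximating $\bR$-divisors by $\bQ$-divisors and comparing stable base loci). I would write the induction as: $\bB_+(a_1D_1+\cdots+a_mD_m)\overset{(m=2)}{=}\bB_+\!\bigl((a_1D_1+\cdots+a_{m-1}D_{m-1})+D_m\bigr)$, then use $(m=2)$ once more on the pair $\bigl(a_1D_1+\cdots+a_{m-1}D_{m-1},\,D_m\bigr)$ to see this equals $\bB_+\!\bigl(t(a_1D_1+\cdots+a_{m-1}D_{m-1})+D_m\bigr)$ for all $t>0$, and finally apply the induction hypothesis to $a_1D_1+\cdots+a_{m-1}D_{m-1}$ (all summands nef) to replace it by $D_1+\cdots+D_{m-1}$, giving $\bB_+(D_1+\cdots+D_m)$. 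This completes the proof.
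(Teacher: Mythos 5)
Your overall strategy --- reduce to the two-divisor case of \cite[Lemma~2.16]{LS21} and iterate --- is exactly what the paper intends (it only says ``by induction, we can easily deduce''), but your final step is a non sequitur as written. You invoke the inductive hypothesis $\bB_+(a_1D_1+\cdots+a_{m-1}D_{m-1})=\bB_+(D_1+\cdots+D_{m-1})$ to ``replace'' $a_1D_1+\cdots+a_{m-1}D_{m-1}$ by $D_1+\cdots+D_{m-1}$ \emph{inside} the expression $\bB_+\bigl(t(a_1D_1+\cdots+a_{m-1}D_{m-1})+D_m\bigr)$. But knowing $\bB_+(A)=\bB_+(B)$ for two nef divisors $A,B$ does not imply $\bB_+(A+C)=\bB_+(B+C)$: the augmented base locus is not a function of the augmented base loci of the summands, so equality of $\bB_+$ does not license substitution under a further sum. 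The inductive hypothesis, in the form you stated it, is simply not strong enough to justify that step.

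The repair is easy and avoids any induction hypothesis of that shape: change the coefficients one at a time. Since $a_2D_2+\cdots+a_mD_m$ is nef, the two-divisor case applied to the pair $\bigl(D_1,\,a_2D_2+\cdots+a_mD_m\bigr)$ with coefficient vectors $(a_1,1)$ and $(1,1)$ gives
\[
\bB_+(a_1D_1+a_2D_2+\cdots+a_mD_m)=\bB_+(D_1+a_2D_2+\cdots+a_mD_m).
\]
Now apply the two-divisor case to $\bigl(D_2,\,D_1+a_3D_3+\cdots+a_mD_m\bigr)$ to normalize $a_2$ to $1$, and continue through $a_m$; after $m$ steps all coefficients equal $1$. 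Each step only uses that a nonnegative combination of nef divisors is nef, so every invocation of \cite[Lemma~2.16]{LS21} is legitimate. (Your observation that the $m=1$ base case does not follow from \cref{prop:B_+}\ref{prop:B_+-num} is correct; it is the degenerate instance $D_1=D_2$ of the two-divisor case, or just scale-invariance of $\bB_+$.)
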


\subsection{Weil height and canonical height}
\label{sub-height}

We refer to \cite[Part B]{GTM201} for an introduction to Weil's height theory.
Among others, we collect some important facts from there.

\begin{theorem}[{cf.~\cite[Theorems~B.3.2, B.3.6, and B.5.9]{GTM201} and \cite[Theorem~1.1.1]{Kawaguchi06-Crelle}}]
\label{thm:weil-height}
Let \(X\) be a smooth projective variety over \(\Qbar\).
Then there exists a unique homomorphism 
\[
h_X\colon \Pic(X)_\bR \to {\raisebox{.1em}{$\{\emph{functions } X(\Qbar) \to \bR\}$}\big/\raisebox{-.1em}{$\{\emph{bounded functions } X(\Qbar) \to \bR\}$}}
\]
satisfying the following properties.
\begin{enumerate}[label=\emph{(\roman*)}, ref=(\roman*)]
\item \emph{(Normalization)} Let \(D\) be a very ample divisor on $X$ and \(\phi_D\colon X\injmap \bP^N\) the associated embedding.
Then we have
\[
h_{X,D} = h\circ \phi_D + O(1),
\]
where \(h\) is the absolute logarithmic height on \(\bP^N\) (see \cite[Definition, Page~176]{GTM201}). 

\item \label{thm:weil-height-functoriality} \emph{(Functoriality)} Let \(\pi\colon X\to Y\) be a morphism of smooth projective varieties and \(D_Y\in \Pic(Y)_\bR\).
Then we have
\[
h_{X,\pi^*\!{D_Y}} = h_{Y,D_Y}\circ \pi + O(1).
\]

\item \label{thm:weil-height-additivity} \emph{(Additivity)} Let \(D_1,D_2\) be $\bR$-divisors on $X$.
Then we have
\[
h_{X,D_1+D_2} = h_{X,D_1} + h_{X,D_2} + O(1).
\]

\item \label{thm:weil-height-positivity} \emph{(Positivity)} Let \(D\) be an effective integral divisor on $X$.
Then \(h_{X,D} \geq O(1)\) outside the base locus \(\Bs(D)\) of \(D\).

\item \label{thm:weil-height-alg-equiv} \emph{(Algebraic equivalence)} Let $H,D\in \Pic(X)_\bR$ be $\bR$-divisors with $H$ ample and $D$ algebraically equivalent to zero.
Then there is a constant $C>0$ such that
\[
h_{X,D} \leq C \sqrt{h_{X,H}^+},
\]
where $h_{X,H}^+ \coloneqq \max(1,h_{X,H})$.
\end{enumerate}
\end{theorem}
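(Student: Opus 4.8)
The plan is to build the height machine in the classical manner of \cite[Part~B]{GTM201}: start from the absolute logarithmic height \(h\) on projective space, transport it along projective embeddings, and then extend \(\bR\)-linearly following \cite{Kawaguchi06-Crelle}. For a very ample divisor \(D\) with associated embedding \(\phi_D\colon X\injmap\bP^N\), one simply \emph{defines} \(h_{X,D}\coloneqq h\circ\phi_D\). The one technical input needed here is the elementary transformation law for heights on \(\bP^N\) under a morphism given by forms of a fixed degree (in particular under \(\PGL_{N+1}\), and under the Veronese and Segre maps): it shows that the class of \(h\circ\phi_D\) in \(\{\text{functions}\}/\{\text{bounded functions}\}\) is independent of the choice of basis of \(H^0(X,\cO_X(D))\) and of the representative of the linear equivalence class of \(D\); that \(h_{X,D_1+D_2}=h_{X,D_1}+h_{X,D_2}+O(1)\) for very ample \(D_1,D_2\) (via the Segre embedding); and that \(h_{X,\pi^*D_Y}=h_{Y,D_Y}\circ\pi+O(1)\) for a morphism \(\pi\colon X\to Y\) and very ample \(D_Y\), even when \(\pi^*D_Y\) is no longer very ample.

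Next, since a positive multiple of an ample divisor is very ample and since every divisor class is a difference of very ample ones, set \(h_{X,D}\coloneqq \frac1m h_{X,mD}\) for ample \(D\) (with \(mD\) very ample) and \(h_{X,A_1-A_2}\coloneqq h_{X,A_1}-h_{X,A_2}\) in general; the additivity established above makes this well defined modulo bounded functions and produces a homomorphism \(h_X\colon\Pic(X)\to\{\text{functions}\}/\{\text{bounded functions}\}\). To pass to \(\Pic(X)_\bR\) one extends \(\bR\)-linearly after presenting an \(\bR\)-divisor as \(\sum a_iD_i\) with \(D_i\) integral; the content is that an \(\bR\)-linear combination of integral divisors which is \(\bR\)-linearly trivial yields a bounded function, which is exactly \cite[Theorem~1.1.1]{Kawaguchi06-Crelle}.

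The five listed properties are then verified as follows. Normalization (i) and additivity (iii) hold by construction. Functoriality (ii) is inherited from the very ample case of the first paragraph after writing \(D_Y\) as a difference of very ample divisors and extending \(\bR\)-linearly. Positivity (iv) follows because heights on \(\bP^N\) are bounded below: for an effective integral \(D\), a global section cutting out \(D\) can be incorporated into the coordinates of a suitable projective embedding so that \(h_{X,D}\ge O(1)\) away from \(\Bs(D)\). For the algebraic equivalence estimate (v), note that \(D\approx 0\) means \(D\in\Pic^0(X)\); via the Albanese morphism \(\alb\colon X\to A\coloneqq\Alb(X)\) one writes \(D=\alb^*D'\) with \(D'\in\Pic^0(A)\), so \(h_{X,D}=h_{A,D'}\circ\alb+O(1)\) by functoriality. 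On the abelian variety \(A\), the N\'eron--Tate height \(\widehat h_{A,D'}\) is \emph{linear} in the point while \(\widehat h_{A,L}\) for a symmetric ample \(L\) is a positive definite quadratic form, so Cauchy--Schwarz gives \(\lvert h_{A,D'}(Q)\rvert\le C_1\sqrt{h_{A,L}(Q)^+}+O(1)\); pulling back along \(\alb\) and dominating \(\alb^*L\) by a large multiple of the ample divisor \(H\) (so that \(C_2 H-\alb^*L\) is ample and hence has height bounded below) converts this into \(h_{X,D}\le C\sqrt{h_{X,H}^+}\), the residual \(O(1)\) being absorbed into \(C\) because \(h_{X,H}^+\ge 1\).

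Finally, uniqueness is immediate: normalization pins down \(h_X\) on very ample classes modulo bounded functions, and additivity together with \(\bR\)-linearity then pins it down on all of \(\Pic(X)_\bR\), since the very ample classes span this space. The two genuinely substantial points are the well-definedness of the \(\bR\)-linear extension (which is the theorem of \cite{Kawaguchi06-Crelle} and not a formal consequence of the \(\bZ\)-divisor case) and estimate (v), whose proof must import the quadraticity of N\'eron--Tate heights and the Cauchy--Schwarz inequality on an abelian variety; by contrast, the remainder of the construction reduces entirely to the transformation behavior of heights on \(\bP^N\) under morphisms of bounded degree, which is the main routine-but-unavoidable calculation.
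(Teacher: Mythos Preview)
The paper does not supply a proof of this theorem; it is recorded as background and attributed to \cite[Theorems~B.3.2, B.3.6, and B.5.9]{GTM201} and \cite[Theorem~1.1.1]{Kawaguchi06-Crelle}. Your sketch is a correct outline of the standard construction and verifications found in those references, including the \(\bR\)-linear extension via Kawaguchi and the reduction of (v) to the abelian-variety case through the Albanese map together with the Cauchy--Schwarz inequality for N\'eron--Tate heights.
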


\begin{remark}
(1) It is worth mentioning that the $O(1)$ terms only depend on varieties, divisors, and morphisms, but are independent of rational points of varieties.
This is why we omit the points $x\in X(\Qbar)$ in various height equations.
See \cite[Remarks~B.3.2.1(ii)]{GTM201}.

(2) When the ambient variety $X$ is clear, we often use $h_D$ to stand for $h_{X,D}$ for simplicity.
\end{remark}

The following finiteness property, originally established by Northcott for integral ample divisors, becomes a fundamental tool in Weil's height theory (see \cite[Theorem~B.3.2(g)]{GTM201} and \cite[Theorem~1.1.2]{Kawaguchi06-Crelle}).
Lesieutre and Satriano proved a version for big $\bR$-divisors (see \cite[Lemma~2.26]{LS21}).
For the reader's convenience, we restate it here.

\begin{theorem}[Northcott finiteness property]
\label{thm:Northcott}
Let \(X\) be a smooth projective variety over \(\Qbar\) and \(D\) a big \(\bR\)-divisor on \(X\).
Then for any \(d\in \bZ_{>0}\) and \(N\in \bR\), the set
\[
\big\{ x\in (X\backslash\bB_+(D))(\Qbar) : [\bQ(x):\bQ] \leq d, \, h_D(x)\leq N \big\}
\]
is finite, where $\bB_+(D)$ denotes the augmented base locus of $D$.
In particular, if \(D\) is an ample \(\bR\)-divisor, then \(\bB_+(D)=\emptyset\) and this is the usual Northcott finiteness property.
\end{theorem}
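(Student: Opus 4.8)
The plan is to reduce the bigness hypothesis to the classical Northcott finiteness property for ample integral divisors, using the structure of augmented base loci from \cref{prop:B_+} together with the additivity and positivity of Weil heights from \cref{thm:weil-height}.

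First I would produce an effective $\bQ$-divisor whose complete linear system has base locus exactly $\bB_+(D)$. Since $D$ is big, $\bB_+(D)\neq X$ by \cref{prop:B_+}. By the part of \cref{prop:B_+} allowing one to compute $\bB_+$ after subtracting a small ample perturbation, I fix an ample $\bR$-divisor $A$ of sufficiently small norm so that $D'\coloneqq D-A$ is a $\bQ$-divisor with $\bB(D')=\bB_+(D)$. Since $\bB(D')\subsetneq X$, the elementary fact recorded in \cref{defn-augmented-bl} yields an integer $M\ge 1$ with $MD'$ Cartier and $\bB(D')=\Bs(|MD'|)$; in particular $|MD'|\neq\emptyset$, so I may pick an effective integral divisor $E\sim MD'$, for which $\Bs(|E|)=\bB_+(D)$.

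Next I would extract a lower height bound on $(X\setminus\bB_+(D))(\Qbar)$. Applying the positivity part of \cref{thm:weil-height} to $E$ gives $h_E\ge O(1)$ away from $\Bs(|E|)=\bB_+(D)$. Since $E\sim MD'$ and $h_X$ is a homomorphism, $h_{D'}=\tfrac{1}{M}h_E+O(1)$, hence $h_{D'}\ge O(1)$ off $\bB_+(D)$; adding $h_A$ and using additivity, $h_D=h_A+h_{D'}+O(1)\ge h_A+O(1)$ on $(X\setminus\bB_+(D))(\Qbar)$. So there is a constant $C$ with $h_D(x)\ge h_A(x)-C$ for all such $x$, whence the set in the statement is contained in $\{x\in X(\Qbar):[\bQ(x):\bQ]\le d,\ h_A(x)\le N+C\}$. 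The latter is finite since $A$ is ample: writing $A$ as a finite positive $\bR$-linear combination of ample integral divisors and applying positivity once more, $h_A$ dominates, up to $O(1)$, a positive multiple of the Weil height of a single ample integral divisor, to which the classical Northcott property \cite[Theorem~B.3.2(g)]{GTM201} applies. The ``in particular'' clause is immediate, being the special case $\bB_+(D)=\emptyset$ valid for ample $D$.

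The delicate point is that the lower bound for $h_D$ must hold precisely on the complement of $\bB_+(D)$, not merely on the complement of the support of some effective $\bR$-divisor $\bR$-linearly equivalent to $D$ (which in general is a strictly larger open set and would not suffice). This is exactly what the perturbation statement of \cref{prop:B_+} supplies: it lets us replace $D$ by a $\bQ$-divisor $D'$ whose \emph{stable} base locus coincides with $\bB_+(D)$, so that the positivity property can be invoked for an honestly effective representative having the correct base locus. Everything else is routine bookkeeping with functoriality, additivity, and positivity of Weil heights.
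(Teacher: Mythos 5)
Your proof is correct. Note that the paper itself does not prove this statement: it explicitly restates \cite[Lemma~2.26]{LS21} for the reader's convenience and defers to that reference. Your argument is a sound, self-contained derivation along exactly the lines of the cited result: use \cref{prop:B_+}\ref{prop:B_+-modification} to replace $D$ by a $\bQ$-divisor $D'=D-A$ whose \emph{stable} base locus equals $\bB_+(D)$, invoke positivity of $h$ off $\Bs(MD')$ for an effective integral representative, and conclude $h_D\geq h_A+O(1)$ on $(X\backslash\bB_+(D))(\Qbar)$, reducing to classical Northcott for the ample part. You correctly identify the one delicate point (the lower bound must hold off $\bB_+(D)$ itself, not merely off the support of some effective representative), and you handle it with the right tool. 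The only step left tacit is the existence of an ample $\bR$-divisor $A$ of arbitrarily small norm with $D-A$ a $\bQ$-divisor; this is standard (perturb the real coefficients of $D$ to rationals and absorb the difference into a small multiple of a fixed ample divisor, using openness of the ample cone), and is implicitly assumed in the statement of \cref{prop:B_+}\ref{prop:B_+-modification} as well, so it is not a gap.
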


\begin{definition}[Bounded height]
\label{defn-bounded-height}
Fix an ample divisor \(H_X\) on a smooth projective variety \(X\) over \(\Qbar\). 
A subset \(S\subseteq X(\Qbar)\) is called a set of \emph{bounded height}, if there is a constant \(C\) such that \(h_{H_X}(s)\leq C\) for all \(s\in S\). 
This property is independent of the choice of the ample divisor \(H_X\).
\end{definition}

The Northcott finiteness property \cref{thm:Northcott} implies that if \(S\subseteq X(\Qbar)\) is a set of bounded height, then \(\{s\in S : [\bQ(s) : \bQ]\leq d\}\) is finite for every positive integer \(d\).

In what follows, we recall a classical result on the canonical height associated with a single endomorphism due to Call and Silverman \cite[Theorem~1.1]{CS93}, though in the book \cite{GTM201} the authors still attributed it to N\'eron and Tate.

\begin{theorem}[{cf.~\cite[Theorem~B.4.1]{GTM201}}]
\label{thm:nef-can-ht}
Let $f\colon X \to X$ be an endomorphism of a smooth projective variety $X$ over $\Qbar$. 
Let $D$ be an $\bR$-divisor on $X$ such that $f^*D \sim_\bR \alpha D$ for some $\alpha>1$.
Then there is a unique function $\widehat{h}_{D,f}\colon X(\Qbar)\to\bR$, called \emph{the canonical height} on $X$ with respect to $f$ and $D$, defined by
\[
\widehat{h}_{D,f}(x) \coloneqq \lim_{m\to \infty} \frac{h_D(f^m(x))}{\alpha^m},
\]
satisfying the following properties:
\begin{enumerate}[label=\emph{(\arabic*)}, ref=(\arabic*)]
\item $\widehat{h}_{D,f} = h_{D} + O(1)$;
\item $\widehat{h}_{D,f}\circ f = \alpha \, \widehat{h}_{D,f}$.
\end{enumerate}
\end{theorem}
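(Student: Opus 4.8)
The plan is to establish the three claimed properties of the canonical height $\widehat h_{D,f}$ via a standard telescoping-series argument, following Call--Silverman. First I would fix a specific Weil height function $h_D$ in the class $h_{X,D}\in\Pic(X)_\bR$-modulo-bounded; by the functoriality and additivity in \cref{thm:weil-height}, the relation $f^*D\sim_\bR\alpha D$ gives $h_D\circ f = \alpha\,h_D + O(1)$, i.e.\ there is a constant $C$ with $\lvert h_D(f(x)) - \alpha\,h_D(x)\rvert \le C$ for all $x\in X(\Qbar)$. From this I would estimate the difference of consecutive terms in the sequence defining $\widehat h_{D,f}$: for $m\ge 0$,
\[
\left\lvert \frac{h_D(f^{m+1}(x))}{\alpha^{m+1}} - \frac{h_D(f^m(x))}{\alpha^m} \right\rvert = \frac{1}{\alpha^{m+1}}\bigl\lvert h_D(f^{m+1}(x)) - \alpha\,h_D(f^m(x))\bigr\rvert \le \frac{C}{\alpha^{m+1}}.
\]
Since $\alpha>1$, the geometric series $\sum_{m\ge 0}C\alpha^{-(m+1)}$ converges, so the sequence $\bigl(h_D(f^m(x))/\alpha^m\bigr)_{m\ge 0}$ is Cauchy and the limit defining $\widehat h_{D,f}(x)$ exists for every $x$.

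Next I would verify the two asserted properties. For property (1), summing the above telescoping estimate from $m=0$ to $\infty$ yields $\lvert \widehat h_{D,f}(x) - h_D(x)\rvert \le \sum_{m\ge 0}C\alpha^{-(m+1)} = C/(\alpha-1)$, a bound independent of $x$; hence $\widehat h_{D,f} = h_D + O(1)$, which in particular shows $\widehat h_{D,f}$ represents the same class as $h_D$ (so it is a Weil height for $D$). For property (2), I would compute directly from the definition:
\[
\widehat h_{D,f}(f(x)) = \lim_{m\to\infty}\frac{h_D(f^{m+1}(x))}{\alpha^m} = \alpha\lim_{m\to\infty}\frac{h_D(f^{m+1}(x))}{\alpha^{m+1}} = \alpha\,\widehat h_{D,f}(x),
\]
where the last equality holds because $\bigl(h_D(f^{m+1}(x))/\alpha^{m+1}\bigr)_m$ is a subsequence (shifted by one index) of the convergent sequence defining $\widehat h_{D,f}(x)$. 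Finally, for uniqueness: if $\widehat h'$ is another function satisfying (1) and (2), then $g \coloneqq \widehat h_{D,f} - \widehat h'$ is bounded (difference of two functions each within $O(1)$ of $h_D$) and satisfies $g\circ f = \alpha\,g$; iterating gives $\lvert g(x)\rvert = \alpha^{-m}\lvert g(f^m(x))\rvert \le \alpha^{-m}\sup\lvert g\rvert \to 0$, so $g\equiv 0$.

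I do not expect a serious obstacle here, since this is the classical Néron--Tate/Call--Silverman limit argument and all the inputs (functoriality, additivity of Weil heights) are quoted in \cref{thm:weil-height}. The only point requiring a small amount of care is that $D$ is an $\bR$-divisor rather than an integral one and only $\bR$-linear equivalence is assumed in $f^*D\sim_\bR\alpha D$; but Weil's height machinery in \cref{thm:weil-height} is already set up on $\Pic(X)_\bR$ and respects $\sim_\bR$ up to $O(1)$, so the estimate $h_D\circ f = \alpha\,h_D + O(1)$ goes through verbatim, and nothing else in the argument uses integrality. Thus the proof is essentially the derivation of the fundamental telescoping inequality followed by three short bookkeeping steps.
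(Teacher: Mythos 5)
Your proposal is correct and is exactly the classical Tate/Call--Silverman telescoping argument; the paper states this theorem without proof, citing \cite{CS93} and \cite[Theorem~B.4.1]{GTM201}, whose proofs proceed precisely as you describe. Your remark that the height machine is already defined on $\Pic(X)_\bR$ and respects $\sim_\bR$ up to $O(1)$ correctly disposes of the only point where the $\bR$-divisor setting could have caused trouble.
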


\begin{remark}
In the above \cref{thm:nef-can-ht}, if we replace $\bR$-linear equivalence with algebraic or numerical equivalence --- often more pratical to verify --- Kawaguchi and Silverman introduced a "canonical height" in a similar manner.
Specifically, they showed that given $f^*D' \num \beta D'$ for some $\bR$-divisor $D'$ and some $\beta > \sqrt{\lambda_1(f)}$, the limit
\[
\widehat{h}_{D',f}(x) \coloneqq \lim_{m\to \infty} \frac{h_{D'}(f^m(x))}{\beta^m}
\]
exists and satisfies the following properties:
\begin{enumerate}[label=(\arabic*), ref=(\arabic*)]
\item $\widehat{h}_{D',f} = h_{D'} + O\big(\sqrt{h_{H}^+}\big)$;
\item $\widehat{h}_{D',f}\circ f = \beta \, \widehat{h}_{D',f}$.
\end{enumerate}
See \cite[Theorem~5]{KS16}.
Its proof relies on their height estimate \cite[Theorem~24]{KS16}.
Yet, the correct proof of this height estimate is due to Matsuzawa (see \cite[Theorem~1.4]{Matsuzawa20}).
Further, we remark that if one needs to control the $O(\sqrt{h_H^+})$ term, this height estimate is inevitable (see, e.g., \cref{lemma:linear-equiv,lemma:nef-can-ht}).
\end{remark}

\subsection{Dynamical degrees and arithmetic degrees}\label{sub-KSC}

In this subsection, we let \(f\colon X\to X\) be a surjective endomorphism of a smooth projective variety \(X\) of dimension $n$ over \(\Qbar\) and \(H_X\) an ample divisor on $X$.
We recall the definitions of dynamical degrees and arithmetic degrees of this algebraic dynamical system \((X,f)\), as well as the Kawaguchi--Silverman conjecture which reveals the relationship between these two dynamical invariants.

\begin{definition}[Dynamical degrees]
\label{defi:dynamical-degrees}
For each \(0\leq k\leq n\), the \emph{\(k\)-th dynamical degree} of \(f\) is defined by
\begin{align*}
\lambda_k(f) \coloneqq \lim_{m\to\infty}((f^m)^*H_X^k \cdot H_X^{n-k})^{1/m} \in \bR_{\geq 1}.
\end{align*}
It is known that dynamical degrees are birational invariants and are independent of the choice of \(H_X\) (see \cite{DS05,Tru20,Dang20}).
They also satisfy the log-concavity property as follows:
\[
\lambda_{k}(f)^2\geq \lambda_{k-1}(f)\lambda_{k+1}(f) \text{ for } 1\leq k\leq n-1.
\]
The \emph{algebraic entropy} of \(f\) is defined by
\[
h_{\mathrm{alg}}(f)\coloneqq\max_{0\leq k\leq n}\log\lambda_k(f).
\]
We refer to \cite{Din12} for more details on dynamical degrees.
\end{definition}

\begin{definition}[Arithmetic degrees]
\label{defn-arith-deg}
Let \(h_{H_X}\) be an absolute logarithmic Weil height function associated with \(H_X\).
Set \(h_{H_X}^+ = \max(1,h_{H_X})\).
For each \(x\in X(\Qbar)\), we define the \emph{arithmetic degree} of $f$ at $x$ by
\[
\alpha_f(x)\coloneqq \lim_{m\to\infty} h_{H_X}^+(f^m(x))^{1/m} \in \bR_{\geq 1}.
\]
It is known that the limit always exists and is also independent of the choice of the ample divisor \(H_X\) (see \cite[Theorem~3]{KS16-abelian} and \cite[Proposition~12]{KS16}, respectively).
\end{definition}

The following conjecture due to Kawaguchi and Silverman asserts the properties of arithmetic degrees.
Throughout this paper, we shall only consider this conjecture for automorphisms.
We refer to \cite[Conjecture~6]{KS16} for a general version of dominant rational self-maps.

\begin{conjecture}[{cf.~\cite[Conjecture~6]{KS16}}]
\label{conj:KSC}
Let \(f\colon X\to X\) be a surjective endomorphism of a smooth projective variety \(X\) over \(\Qbar\).
Then for any point \(x\in X(\Qbar)\), if the forward $f$-orbit \(\cO_f(x)\coloneqq\{f^m(x)\colon m\in \bZ_{\geq 0}\}\) of $x$ is Zariski dense in \(X\), then 
\[
\alpha_f(x)=\lambda_1(f).
\]
\end{conjecture}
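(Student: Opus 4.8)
The plan is to establish \cref{cor:KSC} --- that is, \cref{conj:KSC} under the maximal dynamical rank hypothesis --- by exploiting the canonical height $\widehat{h}_G=\sum_{i=1}^{n}\widehat{h}_{D_i,g_i}$ constructed in \cref{thm:A} (see \cref{thm:nef-big,thm:A-full}) together with the Northcott property. I will use the following features of that construction: there are nef $\bR$-divisors $D_1,\dots,D_n$ and characters $\chi_i\colon G\to\bR_{>0}$ such that $(g')^{*}D_i$ is numerically equivalent to $\chi_i(g')D_i$ for every $g'\in G$ (and $\bR$-linearly equivalent when $g'=g_i$), with $\chi_i(g_i)=\lambda_i=\lambda_1(g_i)>1$, $\chi_k(g_i)<1$ for $k\neq i$, and $\lambda_1(g')=\max_i\chi_i(g')$ for all $g'\in G$; moreover each nef canonical height satisfies $\widehat{h}_{D_i,g_i}\geq 0$ and $\widehat{h}_{D_i,g_i}=h_{D_i}+O(1)$, the class $B\coloneqq\sum_iD_i$ is nef and big with $\bB_+(B)\subseteq Z$ (forced by \cref{thm:A}(2) and \cref{thm:Northcott}), and --- by the uniqueness clause of \cref{thm:nef-can-ht} combined with the commutativity of $G$ --- the components scale exactly under the whole group: $\widehat{h}_{D_i,g_i}\circ g'=\chi_i(g')\,\widehat{h}_{D_i,g_i}$ for every $g'\in G$.

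Fix $g\in G$ and $x\in X(\Qbar)$ with $\cO_g(x)$ Zariski dense; then $g\neq\id$ and, since $Z$ is $G$-invariant and closed, $x\notin Z$. The bound $\alpha_g(x)\leq\lambda_1(g)$ is the general arithmetic-degree estimate of Kawaguchi--Silverman and Matsuzawa (\cite{KS16,Matsuzawa20}), so the task is the reverse inequality, and the crux is the claim that $\widehat{h}_{D_i,g_i}(x)>0$ for every $i$. Suppose instead $\widehat{h}_{D_{i_0},g_{i_0}}(x)=0$ for some $i_0$. Using the scaling relations and $\chi_k(g_{i_0})<1$ for $k\neq i_0$,
\[
\widehat{h}_G\big(g_{i_0}^{\,m}(x)\big)=\sum_{k\neq i_0}\chi_k(g_{i_0})^m\,\widehat{h}_{D_k,g_k}(x)\longrightarrow 0\qquad(m\to\infty),
\]
so $\widehat{h}_G$ is bounded on the forward $g_{i_0}$-orbit of $x$. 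Since $CB-H_X$ is big for $C\gg 0$, \cref{prop:B_+} and $\bB_+(B)\subseteq Z$ give $h_{H_X}\leq C\,\widehat{h}_G+O(1)$ on $X\setminus Z$; because $g_{i_0}^{\,m}(x)\in X\setminus Z$ and all these points lie over a fixed number field, \cref{thm:Northcott} forces $\{g_{i_0}^{\,m}(x):m\geq 0\}$ to be finite, i.e. $x$ is $g_{i_0}$-periodic, say $g_{i_0}^{\,p}(x)=x$ with $p\geq 1$. As $G$ is abelian, $g_{i_0}^{\,p}$ then fixes every point of the Zariski dense set $\cO_g(x)$, hence $g_{i_0}^{\,p}=\id_X$, contradicting the torsion-freeness of $G\cong\bZ^{n-1}$ since $g_{i_0}\neq\id$. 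Combined with $\widehat{h}_{D_i,g_i}\geq 0$ this yields $\widehat{h}_{D_i,g_i}(x)>0$ for all $i$.

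Granting the claim, put $I=\{i:\chi_i(g)=\lambda_1(g)\}\neq\emptyset$ and $c=\sum_{i\in I}\widehat{h}_{D_i,g_i}(x)>0$. Then
\[
\widehat{h}_G\big(g^m(x)\big)=\sum_{i=1}^{n}\chi_i(g)^m\,\widehat{h}_{D_i,g_i}(x)\geq c\,\lambda_1(g)^m,
\]
and since $C'H_X-B$ is ample for $C'\gg 0$ we have $h_{H_X}\geq (C')^{-1}\widehat{h}_G-O(1)$ everywhere, whence $h_{H_X}^{+}(g^m(x))\geq (c/2C')\lambda_1(g)^m$ for $m\gg 0$; therefore $\alpha_g(x)=\lim_m h_{H_X}^{+}(g^m(x))^{1/m}\geq\lambda_1(g)$, and with the upper bound, $\alpha_g(x)=\lambda_1(g)$.

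The step I expect to be the main obstacle is the claim $\widehat{h}_{D_i,g_i}(x)>0$: the decisive idea is to feed a single vanishing $\widehat{h}_{D_{i_0},g_{i_0}}(x)=0$ into the dynamics of the \emph{distinguished} automorphism $g_{i_0}$ --- which, crucially, shrinks every other eigendivisor --- so that the forward $g_{i_0}$-orbit collapses to bounded height, and then to convert $g_{i_0}$-periodicity of $x$ into the identity $g_{i_0}^{\,p}=\id_X$ via the density of $\cO_g(x)$ and the commutativity and torsion-freeness of $G$. The remaining inputs --- the exact $G$-equivariance of the component heights (uniqueness in \cref{thm:nef-can-ht}), nonnegativity of nef canonical heights, and the two comparisons between $h_{H_X}$ and $\widehat{h}_G$ (from \cref{prop:B_+,thm:weil-height,thm:Northcott} and the bigness of $B$) --- are routine given the construction in \cref{thm:nef-big}.
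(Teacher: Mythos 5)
Your overall architecture coincides with the paper's: decompose $\widehat{h}_G=\sum_i\widehat{h}_{D_i,g_i}$, show each component is strictly positive at a point with dense orbit, compare $\widehat{h}_G$ with $h_{H_X}$ via bigness/ampleness, and invoke the Kawaguchi--Silverman--Matsuzawa upper bound. The paper routes the positivity claim through the full characterization of the null locus (\cref{thm:A-full}\ref{thm:A-full-zero-loci} together with \cref{cor:non-periodic}), whereas you derive a contradiction directly from Zariski density: $g_{i_0}$-periodicity of $x$ plus commutativity forces $g_{i_0}^{p}=\id_X$ on the dense set $\cO_g(x)$, contradicting torsion-freeness of $G$. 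That variant is correct and, for this particular corollary, slightly shorter than the paper's detour through the periodic/non-periodic dichotomy.

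There is, however, one genuine gap in your justifications: you assert that the exact equivariance $\widehat{h}_{D_i,g_i}\circ g'=\chi_i(g')\,\widehat{h}_{D_i,g_i}$ for \emph{every} $g'\in G$ follows from ``the uniqueness clause of \cref{thm:nef-can-ht} combined with the commutativity of $G$.'' It does not. The uniqueness clause applies to a function $\phi$ with $\phi\circ g_i=\lambda_1(g_i)\phi$ \emph{and} $\phi=h_{D_i}+O(1)$; for $\phi\coloneqq\chi_i(g')^{-1}\widehat{h}_{D_i,g_i}\circ g'$ the first condition holds by commutativity, but the second does not follow, because for $g'\neq g_i$ the eigenrelation $(g')^*D_i\equiv\chi_i(g')D_i$ is only \emph{numerical}, so \cref{thm:weil-height}\ref{thm:weil-height-alg-equiv} gives only $h_{D_i}\circ g'=\chi_i(g')h_{D_i}+O\bigl(\sqrt{h_{H_X}^+}\bigr)$, not $+O(1)$. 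You use this full-group scaling essentially everywhere (e.g.\ for $\widehat{h}_{D_k,g_k}\circ g_{i_0}^{m}$ with $k\neq i_0$, and for $\widehat{h}_G\circ g^m$). The correct argument (the paper's \cref{lemma:nef-can-ht}\ref{lemma:nef-can-ht-2}) evaluates the $O\bigl(\sqrt{h_{H_X}^+}\bigr)$ error along the orbit $g_i^m(x)$ and kills it in the limit using Matsuzawa's growth estimate $h_{H_X}^+(g_i^m(x))\leq C\,m^{\rho(X)-1}\lambda_1(g_i)^m\,h_{H_X}^+(x)$, so that the error divided by $\lambda_1(g_i)^m$ tends to $0$. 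With that substitution your proof goes through.
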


\section{Construction of distinguished automorphisms and divisors}
\label{section:construction}

The whole section is dedicated to the construction of $n$ distinguished automorphisms in the rank $n-1$ free abelian subgroup $G\leq \Aut(X)$ of positive entropy and $n$ nef \(\bR\)-divisors whose sum is a big \(\bR\)-divisor (see \cref{thm:nef-big}).
Consequently, we can define a canonical height in \Cref{section:can-ht}.

\subsection{Commuting families of linear maps preserving cones}
\label{subsec:commuting-family}

Throughout this subsection, $V$ is a finite-dimensional $\bR$-vector space and $C$ is a \emph{salient} closed convex cone of $V$ of \emph{full dimension} (i.e., $C\cap (-C)=\{0\}$ and $C$ spans $V$).
We recall a few facts on them from linear algebra which are crucial to the construction in \S\ref{subsec:construction}.
The following is Birkhoff's generalization of the classic Perron--Frobenius theorem.

\begin{theorem}[cf.~\cite{Birkhoff67}]
\label{thm:Birkhoff}
Let $\varphi \in \End(V)$ be an $\bR$-linear endomorphism of $V$ such that $C$ is $\varphi$-invariant, i.e., $\varphi(C) \subseteq C$.
Then the spectral radius $\rho(\varphi)$ is an eigenvalue of $\varphi$ and there is an eigenvector $v_\varphi \in C$ of $\varphi$ associated with $\rho(\varphi)$.
\end{theorem}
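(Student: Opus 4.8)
The plan is to deduce the theorem from the classical Perron--Frobenius theorem for \emph{primitive} (interior-preserving) maps by a perturbation-and-limit argument. Since $C$ is salient, closed, convex and of full dimension, both $C$ and its dual cone $C^{*}\subseteq V^{*}$ have nonempty interior; fix once and for all a vector $v_{0}\in\operatorname{int}(C)$ and a functional $\ell\in\operatorname{int}(C^{*})$ normalized by $\ell(v_{0})=1$. Then $\ell(w)>0$ for every $w\in C\setminus\{0\}$, and the \emph{base} $B\coloneqq\{w\in C:\ell(w)=1\}$ is a nonempty compact convex set meeting every ray of $C$.

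For each $\eps>0$ set $\varphi_{\eps}\coloneqq\varphi+\eps\,\ell(\cdot)\,v_{0}\in\End(V)$. I would first check that $\varphi_{\eps}$ is primitive: for $w\in C\setminus\{0\}$ one has $\ell(w)>0$, so $\eps\,\ell(w)\,v_{0}\in\operatorname{int}(C)$, and adding $\varphi(w)\in C$ keeps us in $\operatorname{int}(C)$ because $\operatorname{int}(C)+C\subseteq\operatorname{int}(C)$; hence $\varphi_{\eps}(C\setminus\{0\})\subseteq\operatorname{int}(C)$. By the classical Perron--Frobenius theorem applied to the primitive map $\varphi_{\eps}$ (this is the special case of the present statement in which the cone is sent into its interior, due to Perron and sharpened by Birkhoff), the spectral radius $\rho_{\eps}\coloneqq\rho(\varphi_{\eps})$ is a positive eigenvalue of $\varphi_{\eps}$ admitting an eigenvector in $\operatorname{int}(C)$; rescaling by $\ell$, I may take this eigenvector $v_{\eps}$ to lie in $B$, so that $\varphi(v_{\eps})+\eps v_{0}=\varphi_{\eps}(v_{\eps})=\rho_{\eps}\,v_{\eps}$.

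Now I would let $\eps\downarrow 0$. The coefficients of the characteristic polynomial of $\varphi_{\eps}$ depend polynomially on $\eps$, hence so do its roots, and therefore $\rho_{\eps}\to\rho(\varphi)$. Since $B$ is compact, there is a sequence $\eps_{k}\downarrow 0$ with $v_{\eps_{k}}\to v_{*}\in B$; passing to the limit in $\varphi(v_{\eps_{k}})+\eps_{k}v_{0}=\rho_{\eps_{k}}v_{\eps_{k}}$ yields $\varphi(v_{*})=\rho(\varphi)\,v_{*}$. As $v_{*}\in B\subseteq C\setminus\{0\}$, this is precisely the assertion, and it subsumes with no extra work the degenerate case $\rho(\varphi)=0$ (i.e.\ $\varphi$ nilpotent).

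The elementary cone facts used above ($\operatorname{int}(C)\ne\emptyset$ and $\operatorname{int}(C^{*})\ne\emptyset$ because $C$ is full-dimensional and salient, and $\operatorname{int}(C)+C\subseteq\operatorname{int}(C)$) and the continuity of the spectral radius are routine. The one substantial ingredient, and the step I would be most careful about, is the primitive case, applied here to $\varphi_{\eps}$: a compactness argument — Brouwer's fixed point theorem applied to the continuous self-map $w\mapsto\varphi_{\eps}(w)/\ell(\varphi_{\eps}(w))$ of $B$ — immediately produces \emph{some} positive eigenvalue of $\varphi_{\eps}$ with eigenvector in $\operatorname{int}(C)$, but showing that this eigenvalue is \emph{the} spectral radius $\rho(\varphi_{\eps})$, rather than a smaller one, is the genuine content of Perron--Frobenius and makes essential use of the strict positivity $\varphi_{\eps}(C\setminus\{0\})\subseteq\operatorname{int}(C)$. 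For a self-contained account one can obtain this from Birkhoff's theorem that $\varphi_{\eps}$ is a strict contraction of $\operatorname{int}(C)$ in the Hilbert projective metric, which at the same time gives the uniqueness (and simplicity) of the interior eigenvector.
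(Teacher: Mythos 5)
The paper offers no proof of this theorem at all --- it is quoted as a classical result with the citation \cite{Birkhoff67} --- so there is no internal argument to compare against. Judged on its own, your perturbation-and-limit argument is correct and is essentially the standard derivation of the general cone-preserving case from the strictly positive (primitive) one. Two remarks. First, the clause ``the coefficients of the characteristic polynomial of $\varphi_{\eps}$ depend polynomially on $\eps$, hence so do its roots'' is wrong as written: roots of a polynomial depend continuously, not polynomially, on its coefficients; continuity is all you use and all you need to get $\rho_{\eps}\to\rho(\varphi)$. Second, the one step you outsource --- that the positive eigenvalue produced by Brouwer's theorem for the primitive map $\varphi_{\eps}$ is in fact the spectral radius --- can be closed more cheaply than by Birkhoff's Hilbert-metric contraction theorem (which, as you'd need to note, applies here because $\varphi_{\eps}(B)$ is a compact subset of $\operatorname{int}(C)$ and hence has finite projective diameter): if $\varphi_{\eps}(v)=\mu v$ with $v\in\operatorname{int}(C)$ and $\mu>0$, then by compactness of the unit sphere there is $s_{0}>0$ with $v-s_{0}w\in C$ for every unit vector $w\in C$, hence $\mu^{m}v-s_{0}\varphi_{\eps}^{m}(w)\in C$ for all $m$; applying $\ell$ and using that $\ell$ dominates a positive multiple of the norm on $C$ yields $\lVert\varphi_{\eps}^{m}\rVert\leq K\mu^{m}$ since $C$ spans $V$, so $\rho(\varphi_{\eps})\leq\mu$ by Gelfand's formula, while $\mu\leq\rho(\varphi_{\eps})$ is trivial. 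With either justification of the primitive case, your limit argument is sound, and the degenerate case $\rho(\varphi)=0$ is handled as you say.
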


It is well-known that a commuting family $\cF \subseteq \Mat_n(\bC)$ of complex matrices possesses a nonzero common eigenvector $v \in \bC^n$.
Below is its analog when $\cF$ is a family of real matrices preserving a salient closed convex cone of full dimension.
It is essentially due to Dinh and Sibony (see \cite[Proposition~4.1]{DS04}).

\begin{proposition}
\label{prop:common-ev}
Let $\cF \subseteq \End(V)$ be a commuting family of $\bR$-linear endomorphisms of $V$ such that $C$ is $\cF$-invariant (i.e., $\psi(C) \subseteq C$ for any $\psi \in \cF$).
Then for any $\varphi \in \cF$, there exists a nonzero vector $v_\varphi \in C$ such that
\begin{enumerate}[label=\emph{(\arabic*)}, ref=(\arabic*)]
\item \label{prop:common-ev-1} for every $\psi \in \cF$, $\psi(v_\varphi) \in \bR_{\geq 0} \cdot v_\varphi$, i.e., $v_\varphi$ is a common eigenvector for all $\psi \in \cF$ associated with some nonnegative eigenvalues; and moreover,
\item \label{prop:common-ev-2} $\varphi(v_\varphi) = \rho(\varphi)v_\varphi$.
\end{enumerate}
\end{proposition}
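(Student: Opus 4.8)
The plan is to prove Proposition~\ref{prop:common-ev} by induction on $\dim V$, treating the single endomorphism $\varphi$ first and then peeling off the rest of the commuting family $\cF$ one vector at a time on a nested sequence of invariant faces of $C$. The base case $\dim V = 1$ is trivial: every endomorphism preserving a one-dimensional salient cone $C = \bR_{\geq 0} v$ acts by a nonnegative scalar, so $v$ itself works for all of $\cF$. For the inductive step, first apply \cref{thm:Birkhoff} to $\varphi$ to obtain $v_0 \in C$ with $\varphi(v_0) = \rho(\varphi) v_0$; this already gives the eigenvector asked for in item~\ref{prop:common-ev-2}, and the work that remains is to upgrade $v_0$ (or replace it by another $\rho(\varphi)$-eigenvector still in $C$) so that it is simultaneously fixed up to scalars by every $\psi \in \cF$.

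The key device is the eigenspace $E \coloneqq \Ker(\varphi - \rho(\varphi)\,\id)$. Since every $\psi \in \cF$ commutes with $\varphi$, $E$ is $\cF$-invariant; and since $E$ contains $v_0 \in C \setminus \{0\}$, the intersection $C_E \coloneqq C \cap E$ is again a salient closed convex cone, nonzero, now of full dimension inside $E$. If $\dim E < \dim V$, I would simply apply the inductive hypothesis to the commuting family $\{\psi|_E : \psi \in \cF\}$ acting on $(E, C_E)$: it produces a nonzero $v_\varphi \in C_E \subseteq C$ that is a common eigenvector for all $\psi|_E$ with nonnegative eigenvalues, and $\varphi(v_\varphi) = \rho(\varphi) v_\varphi$ because $v_\varphi \in E$ — so both conclusions hold. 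The genuinely delicate case is $\dim E = \dim V$, i.e.\ $\varphi = \rho(\varphi)\,\id$ on all of $V$; then item~\ref{prop:common-ev-2} is automatic for any vector, and what is needed is just a common eigenvector in $C$ for the commuting family $\cF$ itself — so the problem reduces to the statement with $\varphi$ dropped, i.e.\ one shows directly that a commuting family of cone-preserving endomorphisms of $(V,C)$ has a common eigenvector in $C$. This I would handle by a second induction, this time on $\dim V$ together with $|\cF|$ (or by a Zorn's-lemma / minimal-face argument): pick any $\psi_1 \in \cF$, use \cref{thm:Birkhoff} and the previous paragraph's reasoning to find $w$ with $\psi_1 w = \rho(\psi_1) w$, pass to the $\psi_1$-eigenspace $E_1 = \Ker(\psi_1 - \rho(\psi_1)\id)$ which is $\cF$-invariant and meets $C$ in a full-dimensional salient cone $C_1$, and recurse on $(E_1, C_1, \cF|_{E_1})$; either $\dim E_1$ drops and induction applies, or $\psi_1$ is scalar and we discard it from $\cF$ and recurse on a strictly smaller family. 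In finitely many steps either the dimension or the size of the family is exhausted, yielding the common eigenvector $v_\varphi \in C$.

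The main obstacle is making sure the recursion is genuinely well-founded and that at each stage the object we recurse on still satisfies the hypotheses of the proposition — namely that $C \cap E$ remains \emph{salient}, \emph{closed}, and of \emph{full dimension in $E$}. Saliency and closedness are inherited for free ($C \cap E$ is a closed subset of the salient cone $C$, hence salient and closed); the real point is full-dimensionality, which holds precisely because $C \cap E$ contains a nonzero vector coming from \cref{thm:Birkhoff} and $C \cap E$ spans $E$ — the latter because $E$, being the kernel of a real operator, is defined over $\bR$ and $C$ has nonempty interior, so $C \cap E$ has nonempty interior relative to $E$ (one can push $v_0$ into the relative interior by averaging with a nearby interior point of $C$ projected into $E$, or invoke that a closed convex cone meeting the interior of $C$ spans the subspace it lies in). Once this bookkeeping is pinned down — and it is exactly the content of \cite[Proposition~4.1]{DS04} — the nonnegativity of all the eigenvalues in item~\ref{prop:common-ev-1} is immediate: any endomorphism preserving a salient closed convex cone sends a cone-eigenvector to a \emph{nonnegative} multiple of itself, since a negative multiple would lie in $-C$, forcing the vector into $C \cap (-C) = \{0\}$.
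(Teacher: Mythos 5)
Your overall strategy (Birkhoff's theorem plus a nested sequence of invariant eigencones) is the same as the paper's, but two steps as written do not hold up. The first is your claim that $C\cap E$ is of full dimension inside $E=\Ker(\varphi-\rho(\varphi)\id)$; this is false in general, and the justification you offer (that $C$ has nonempty interior, hence $C\cap E$ has nonempty relative interior in $E$) fails because $E$ need not meet the interior of $C$ at all. Concretely, take $V=\bR^3$, $C=\bR_{\geq0}^3$, and
\[
\varphi=\begin{pmatrix}1&1&1\\0&1&0\\0&0&1\end{pmatrix}.
\]
Then $\varphi(C)\subseteq C$ and $\rho(\varphi)=1$, while $E=\Ker(\varphi-\id)=\{y+z=0\}$ is two-dimensional but $C\cap E=\bR_{\geq0}\cdot(1,0,0)$ is a single ray. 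So you cannot apply the inductive hypothesis to the pair $(E,\,C\cap E)$. The repair is exactly what the paper does: replace $E$ by the linear span $V_\varphi$ of the eigencone $C_\varphi=C\cap E$ (this span is $\cF$-invariant because $C_\varphi$ is, by commutativity), and recurse on $(V_\varphi,C_\varphi)$, in which $C_\varphi$ is full-dimensional by construction. The dichotomy then becomes: either $\dim V_\varphi<\dim V$, or $C_\varphi$ spans $V$, which forces $E=V$ and hence $\varphi=\rho(\varphi)\id$ --- so your case split survives, but only after this substitution.

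The second gap is the termination of your recursion in the residual case. An ``induction on $\dim V$ together with $|\cF|$,'' discarding one scalar element of $\cF$ at a time, never terminates when $\cF$ is infinite --- and in the intended application $\cF$ is infinite (the image of $G\cong\bZ^{n-1}$ in $\GL(\N^1(X)_\bR)$). The paper closes this by a compactness argument: the candidate sets $\widetilde{C}_\psi$ project to closed subsets of the compact space $\bP^+(V_\varphi)$, so the full intersection is nonzero as soon as every finite subintersection is, and only finite subfamilies are treated by the nested-cone construction. Your parenthetical ``minimal-face argument'' can be made rigorous instead (take a minimal nonzero $\cF$-invariant subspace $W\subseteq V_\varphi$ spanned by a nonzero $\cF$-invariant salient closed convex subcone $C_W$ of $C_\varphi$; for each $\psi\in\cF$ the span of the $\rho(\psi|_W)$-eigencone in $C_W$ is again such a subspace, so by minimality it equals $W$ and $\psi|_W$ is scalar, whence any nonzero $v\in C_W$ works), but as written the proof does not close for infinite $\cF$. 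The final observation, that eigenvalues of cone-preserving maps at cone vectors are automatically nonnegative, is correct.
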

\begin{proof}
Let $\varphi \in \cF$ be fixed.
By Birkhoff's \cref{thm:Birkhoff}, the spectral radius $\rho(\varphi)$ of $\varphi$ is an eigenvalue of $\varphi$ and the corresponding eigenvector can be chosen to lie in $C$.
In particular,
\[
C_\varphi \coloneqq \{v \in C : \varphi(v) = \rho(\varphi) v\}
\]
is a nonzero salient closed convex cone of $V$.
It is easy to see that $C_\varphi$ is $\cF$-invariant.
Indeed, for any $\psi \in \cF$ and any $v \in C_\varphi \subseteq C$, by assumption that $C$ is $\cF$-invariant, we have $\psi(v) \in C$.
Then it follows from the commutativity of $\cF$ that
\[
\varphi(\psi(v)) = \psi(\varphi(v)) = \psi(\rho(\varphi)v) = \rho(\varphi) \psi(v).
\]
Hence $\psi(v) \in C_\varphi$ by the definition of $C_\varphi$.

Denote by $V_\varphi$ the $\bR$-vector subspace of $V$ spanned by $C_\varphi$.
Clearly, $V_\varphi$ is nonzero, contained in the eigenspace of $\varphi$ associated with the eigenvalue $\rho(\varphi)$, and $\cF$-invariant, since so is $C_\varphi$.
It suffices to show that there exists a common eigenvector $v_\varphi \in C_\varphi$ for every $\psi \in \cF$.
In other words, let $\psi$ be arbitrary in $\cF$ and denote
\[
\widetilde{C}_\psi \coloneqq \{v \in C_\varphi : \psi(v) = \chi_\psi v, \, \chi_\psi \in \bR_{\geq 0}\},
\]
which is a nonzero salient closed (possibly nonconvex) cone of $V_\varphi$.
Then it remains to show that
\begin{equation}
\label{eq:infinite-intersection}
\bigcap_{\psi \in \cF} \widetilde{C}_\psi \neq \{0\},
\end{equation}
or equivalently, in the quotient space $\bP^+(V_\varphi)\coloneqq (V_{\varphi}\backslash\{0\})/\bR_{>0}$ (think of \(\bR_{>0}\) as the multiplicative subgroup of $\bR^*\coloneqq\bR\backslash\{0\}$),
\begin{equation}
\label{eq:infinite-intersection-Proj}
\bigcap_{\psi \in \cF} \bP^+(\widetilde{C}_\psi) \neq \emptyset,
\end{equation}
where each $\bP^+(\widetilde{C}_\psi)$ denotes the image of $\widetilde{C}_\psi\backslash\{0\}$ under the natural quotient map
\[
\pi\colon V_\varphi\backslash\{0\} \to \bP^+(V_\varphi).
\]
Note that $\bP^+(V_\varphi)$ endowed with the quotient topology is homeomorphic with the $(\dim V_\varphi-1)$-sphere and hence compact.
Moreover, \(\bP^+(\widetilde{C}_\psi)\) is closed in $\bP^+(V_\varphi)$ since so is \(\widetilde{C}_{\psi}\backslash \{0\}\) in $V_\varphi\backslash\{0\}$.
We are thus reduced to show \cref{eq:infinite-intersection-Proj} or \cref{eq:infinite-intersection} for any finite $\cF'\subseteq \cF$.

Suppose now that $\cF' = \{\psi_1,\ldots,\psi_m\}$ is finite.
We shall inductively construct pairs $(V_k, C_k)$, $1 \leq k \leq m$, satisfying the following properties:
\begin{enumerate}[label=(\roman*), ref=(\roman*)]
\item \label{decreasing_V} $V_0 \coloneqq V_\varphi \supseteq V_1 \supseteq \dots \supseteq V_m \neq \{0\}$ is a decreasing sequence of nonzero $\cF$-invariant $\bR$-vector subspaces of $V_\varphi$;
\item \label{decreasing_C} $C_0 \coloneqq C_\varphi \supseteq C_1 \supseteq \dots \supseteq C_m \neq \{0\}$ is a decreasing sequence of nonzero $\cF$-invariant salient closed convex cones in $V_\varphi$;
\item \label{nonzero_intersection} for each $1 \leq k \leq m$, $C_k$ spans $V_k$ and
\[
C_k \subseteq \bigcap_{i=1}^k \widetilde{C}_{\psi_i}.
\]
\end{enumerate}
As an immediate consequence of this construction, one gets $\cap_{i=1}^m \widetilde{C}_{\psi_i} \neq \{0\}$ since it contains a nonzero cone $C_m$, which completes the proof of \cref{prop:common-ev}.

We first construct $(V_1, C_1)$.
Recall that $V_0=V_\varphi$ and the cone $C_0=C_\varphi$ are both $\cF$-invariant and hence $\psi_1$-invariant.
Applying \cref{thm:Birkhoff} again to the triplet $(V_0, \psi_1|_{V_0}, C_0)$ yields that the spectral radius $\rho(\psi_1|_{V_0})$ of the restriction $\psi_1|_{V_0}$ is an eigenvalue of $\psi_1|_{V_0}$ and
\[
C_{1} \coloneqq \{v \in C_0 : \psi_1(v) = \rho(\psi_1|_{V_0}) v\}
\]
is a nonzero salient closed convex cone of $V_0$.
Again by the commutativity of $\cF$ and the $\cF$-invariance of $C_0$, we see that $C_1$ is $\cF$-invariant.
Denote by $V_1$ the $\bR$-vector subspace of $V_0$ spanned by $C_1$.
Then $V_1$ is also $\cF$-invariant and $C_1 \subseteq \widetilde{C}_{\psi_1}$ by definition.

By inductive hypothesis, suppose that we have constructed pairs $(V_i, C_i)$ for all $1 \leq i \leq k-1$ with $2 \leq k \leq m$.
We then construct $(V_k, C_k)$ satisfying all properties~\ref{decreasing_V} to \ref{nonzero_intersection}.
Note that $V_{k-1}$ and the spanning cone $C_{k-1}$ are both $\cF$-invariant and hence $\psi_k$-invariant.
It follows from \cref{thm:Birkhoff}, applied to the triplet $(V_{k-1}, \psi_k|_{V_{k-1}}, C_{k-1})$, that
\[
C_k \coloneqq \{v \in C_{k-1} : \psi_k(v) = \rho(\psi_k|_{V_{k-1}}) v\}
\]
is a nonzero salient closed convex cone of $V_{k-1}$.
Let $V_k$ denote the $\bR$-vector subspace of $V_{k-1}$ spanned by $C_k$.
Both $V_k$ and $C_k$ are $\cF$-invariant in a similar vein.
Moreover, as $C_k \subseteq C_{k-1} \cap \widetilde{C}_{\psi_k}$ by construction, the property~\ref{nonzero_intersection} for $C_k$ follows.
\end{proof}

\begin{remark}
\label{rmk:characters}
Note that in the above \cref{prop:common-ev}, if we replace $\cF \subseteq \End(V)$ with an abelian subgroup $\cG \leq \GL(V)$ of \(\bR\)-linear automorphism group of \(V\), then for any $\varphi \in \cG$, there is a common eigenvector $v_\varphi \in C$ such that for every $\psi \in \cG$, $\psi(v_\varphi) \in \bR_{>0} \cdot v_\varphi$ and $\varphi(v_\varphi) = \rho(\varphi)v_\varphi$.
It gives rise to a multiplicative group character \(\chi_\varphi \colon \cG \to (\bR_{>0}, \times)\) defined by $\psi(v_\varphi) = \chi_\varphi(\psi)v_\varphi$ for any $\psi\in \cG$; the character $\chi_\varphi$ also satisfies that \(\chi_\varphi(\varphi)=\rho(\varphi)\).
\end{remark}

A priori, it is still unknown that for different $\varphi$ and $\varphi'$, the above characters $\chi_{\varphi}$ and $\chi_{\varphi'}$ are distinct, neither the uniqueness of $\chi_\varphi$ for each $\varphi$.
Nonetheless, the following lemma shows that there are at most $\dim_\bR V$ distinct characters of $\cG$ constructed via common eigenvectors in $C$, whose proof is straightforward and omitted.

\begin{lemma}
\label{lemma:distinct-characters}
Let $\cG$ be an abelian subgroup of $\GL(V)$ such that $C$ is $\cG$-invariant.
As in \cref{rmk:characters}, let \(\chi_i \colon \cG \to (\bR_{>0}, \times)\) with \(1\leq i\leq m\) be group characters of $\cG$ associated with some common eigenvectors \(v_i \in C\).
Then the following statements are equivalent:
\begin{enumerate}[label=\emph{(\arabic*)}, ref=(\arabic*)]
\item \label{lemma:distinct-characters-1} \(\chi_1,\dots,\chi_m\) are mutually distinct group characters;
\item \label{lemma:distinct-characters-2} \(v_1,\dots,v_m\) are mutually noncollinear;
\item \label{lemma:distinct-characters-3} \(v_1,\dots,v_m\) are linearly independent.
\end{enumerate}
\end{lemma}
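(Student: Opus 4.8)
The implication $(3)\Rightarrow(2)$ is immediate: linearly independent vectors are pairwise noncollinear, since if $v_i = c\,v_j$ for some $c\in\bR$ and $i\neq j$, then $\{v_i,v_j\}$ is already linearly dependent. For $(2)\Rightarrow(1)$, I would argue contrapositively: if $\chi_i = \chi_j$ for some $i\neq j$, I want to conclude $v_i$ and $v_j$ are collinear. The key observation is that $v_i$ and $v_j$ both lie in the salient cone $C$, and for every $\psi\in\cG$ we have $\psi(v_i) = \chi_i(\psi)v_i$ and $\psi(v_j) = \chi_j(\psi)v_j = \chi_i(\psi)v_j$. Hence the two-dimensional (a priori) subspace $W \coloneqq \Span(v_i, v_j)$ is $\cG$-invariant, and every $\psi\in\cG$ acts on $W$ as the scalar $\chi_i(\psi)$. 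But then the cone $C\cap W$ is a salient closed convex cone in $W$ that is invariant under the scalar action — which puts no constraint. I need a sharper idea: the point is that $v_i, v_j \in C\cap W$ are nonzero, and $C\cap W$ is salient, so it is a ``proper'' cone inside the (at most $2$-dimensional) space $W$; but scalars act trivially on rays, so this alone does not force collinearity. The actual mechanism must be that $\chi_i$ was \emph{constructed} from $v_i$ via a specific recipe (Birkhoff / \cref{prop:common-ev}) — however, re-reading the statement, $\chi_i$ is just \emph{some} character attached to \emph{some} common eigenvector $v_i\in C$, so I should not rely on the construction.

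Let me reconsider: the cleanest route is $(1)\Rightarrow(3)$ and $(2)\Leftrightarrow(3)$, proving the substantive content as ``distinct characters $\Rightarrow$ linearly independent eigenvectors.'' Suppose $\chi_1,\dots,\chi_m$ are mutually distinct. This is exactly the classical fact that eigenvectors belonging to distinct eigenvalue-systems of a commuting family are linearly independent: if $\sum a_i v_i = 0$ were a nontrivial relation of minimal length, pick $\psi\in\cG$ with $\chi_1(\psi)\neq\chi_2(\psi)$ (possible since $\chi_1\neq\chi_2$ as functions on $\cG$), apply $\psi$ to get $\sum a_i\chi_i(\psi)v_i = 0$, and subtract $\chi_1(\psi)$ times the original relation to kill the first term, producing a shorter relation $\sum_{i\geq2} a_i(\chi_i(\psi)-\chi_1(\psi))v_i = 0$ with the coefficient of $v_2$ nonzero — contradicting minimality. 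This gives $(1)\Rightarrow(3)$ with no reference to the cone at all. Then $(3)\Rightarrow(2)$ is trivial as above, and $(2)\Rightarrow(1)$: if two characters $\chi_i,\chi_j$ coincide then for all $\psi$, $\psi$ scales $v_i$ and $v_j$ by the same factor, so $\psi(v_i - v_j) = \chi_i(\psi)(v_i - v_j)$; but this does not yet give collinearity of $v_i$ and $v_j$ — unless I use that $v_i, v_j\in C$. Here is where $C$ enters: actually it still does not force collinearity (take $C = \bR_{\geq0}^2$, $v_1 = e_1$, $v_2 = e_2$, scalars act with equal eigenvalues). So $(2)\Rightarrow(1)$ as literally stated seems \emph{false} in general — meaning the characters $\chi_i$ must, in the paper's setup, carry more data.

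Therefore I would prove the equivalence as follows, being careful about what ``character associated with a common eigenvector'' means: the honest content is $(2)\Leftrightarrow(3)$ (a linear-algebra triviality in one direction, and in the other, noncollinearity of $m$ vectors does \emph{not} imply independence for $m\geq3$, e.g. $e_1, e_2, e_1+e_2$ — so even $(2)\Leftrightarrow(3)$ fails unless we are in a very special situation). \textbf{The main obstacle}, then, is pinning down the precise hypothesis that makes all three equivalent; I expect the intended reading is that each $v_i$ spans the \emph{full} $\chi_i$-eigenray and these eigenvectors are part of a simultaneous eigenbasis coming from \cref{prop:common-ev}'s inductive construction, so that ``noncollinear'' genuinely upgrades to ``independent'' because distinct rays in the cone correspond to distinct characters which by the argument above have independent eigenvectors. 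Concretely, I would: (a) prove $(1)\Rightarrow(3)$ by the minimal-relation argument above; (b) note $(3)\Rightarrow(2)$ is immediate; (c) prove $(2)\Rightarrow(1)$ by showing that noncollinear common eigenvectors in $C$ give distinct characters — using that if $\chi_i=\chi_j$ then $W = \Span(v_i,v_j)\subseteq C + (-C)$ carries a trivial-up-to-scalar $\cG$-action, and then invoking the salience of $C$ together with the fact (which the paper presumably intends) that the $v_i$ are extremal rays or part of the canonical eigenbasis, forcing $\dim W = 1$. I would flag in the write-up that the straightforward direction is $(3)\Rightarrow(2)$ and $(1)\Rightarrow(3)$, and that $(2)\Rightarrow(1)$ uses the cone structure essentially; the paper itself says the proof ``is straightforward and omitted,'' so a two-line pointer to the minimal-relation argument plus the remark that collinearity $\Leftrightarrow$ equal character (by evaluating at all $\psi\in\cG$) is the expected level of detail.
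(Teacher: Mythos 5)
Your handling of the two implications that are both true and actually needed is correct, and is surely the ``straightforward'' argument the authors have in mind (the paper explicitly omits its proof, so there is no written argument to compare against). The minimal-length-relation argument for $(1)\Rightarrow(3)$ --- pick $\psi\in\cG$ separating two characters occurring in a shortest nontrivial relation, apply $\psi$, and subtract a multiple of the original relation to shorten it --- is the classical proof that common eigenvectors for pairwise distinct characters of a commuting family are linearly independent; as you note, the cone $C$ plays no role. Together with the trivial $(3)\Rightarrow(2)$, this gives everything the paper ever uses: in \cref{lemma:chi-achieves-rho}, \cref{prop:characters-of-G}, and \cref{lemma:character-non-vanishing} the lemma is invoked only for the bound $m\leq\dim_\bR V$ on the number of distinct characters, i.e., only for $(1)\Rightarrow(3)$.

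Your diagnosis of $(2)\Rightarrow(1)$ is also correct, and you should state it as a conclusion rather than hedge: for an arbitrary finite list of common eigenvectors in $C$, the implications $(2)\Rightarrow(1)$ and $(2)\Rightarrow(3)$ are false. Concretely, take $V=\bR^2$, $C=\bR_{\geq 0}^2$, and $\cG$ generated by $2\id$ (or even $\cG$ trivial): then $e_1$, $e_2$, $e_1+e_2$ are mutually noncollinear common eigenvectors in $C$ carrying one and the same character, so $(2)$ holds while $(1)$ and $(3)$ both fail; salience of $C$ is of no help, exactly as you observed. The only unconditional statements are the chain $(1)\Rightarrow(3)\Rightarrow(2)$ and the pairwise fact that collinear eigenvectors have equal characters (evaluate any $\psi$ on $v_i=cv_j$), which gives $(1)\Rightarrow(2)$ directly. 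Do not try to rescue $(2)\Rightarrow(1)$ via extremality of the $v_i$ or the nested-cone construction of \cref{prop:common-ev}: that construction does not single out a unique ray per character either. The honest write-up is your $(1)\Rightarrow(3)$ argument plus $(3)\Rightarrow(2)$, with a remark that the converse directions hold only under the convention --- satisfied everywhere the lemma is applied in the paper --- that distinct indices are assigned distinct characters, i.e., that one records at most one eigenvector per character.
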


\begin{lemma}
\label{lemma:chi-achieves-rho}
Let $\{\chi_1,\dots,\chi_m\}$ denote the set of all distinct multiplicative group characters of $\cG \leq \GL(V)$ with $m\leq \dim_\bR V$, where each $\chi_i$ is associated with a common eigenvector $v_i\in C$. 
Then for any \(\varphi \in \cG\), there exists some \(1 \leq i \leq m\) such that \(\chi_i(\varphi) = \rho(\varphi)\), i.e., \(\varphi(v_i) = \rho(\varphi)v_i\).
\end{lemma}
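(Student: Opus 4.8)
The statement says: given the complete set $\{\chi_1,\dots,\chi_m\}$ of distinct characters of $\cG$ arising from common eigenvectors in $C$, every $\varphi\in\cG$ satisfies $\chi_i(\varphi)=\rho(\varphi)$ for some $i$. The natural approach is to apply \cref{rmk:characters} to the element $\varphi$ itself: there exists a common eigenvector $v_\varphi\in C$ whose associated character $\chi_\varphi$ satisfies $\chi_\varphi(\varphi)=\rho(\varphi)$. The only thing to check is that $\chi_\varphi$ is already on the list, i.e.\ $\chi_\varphi=\chi_i$ for some $i$; then evaluating at $\varphi$ finishes the proof.

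First I would recall that by hypothesis $\{\chi_1,\dots,\chi_m\}$ is the set of \emph{all} distinct characters of $\cG$ obtained via common eigenvectors in $C$. By \cref{rmk:characters}, applied with the fixed element $\varphi$, we obtain a nonzero $v_\varphi\in C$ that is a common eigenvector for all of $\cG$, giving rise to a character $\chi_\varphi\colon\cG\to(\bR_{>0},\times)$ with $\chi_\varphi(\varphi)=\rho(\varphi)$. Since $\chi_\varphi$ is by construction one of the characters of $\cG$ coming from a common eigenvector in $C$, and the list $\{\chi_1,\dots,\chi_m\}$ exhausts exactly those, we must have $\chi_\varphi=\chi_i$ for some $1\le i\le m$. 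Hence $\chi_i(\varphi)=\chi_\varphi(\varphi)=\rho(\varphi)$, and by \cref{lemma:distinct-characters} the equality of characters $\chi_i=\chi_\varphi$ forces $v_i$ and $v_\varphi$ to be collinear (both lie in $C$), so $\varphi(v_i)=\chi_i(\varphi)v_i=\rho(\varphi)v_i$ as well. This is the whole argument.

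The one point deserving care — and the closest thing to an obstacle — is the bound $m\le\dim_\bR V$ and the implicit assertion that $\{\chi_1,\dots,\chi_m\}$ is genuinely \emph{finite and complete}, so that "$\chi_\varphi$ is on the list" is meaningful. This is exactly the content of \cref{lemma:distinct-characters}: distinct characters correspond to linearly independent common eigenvectors in $C$, of which there can be at most $\dim_\bR V$; so the collection of all such characters is finite and the phrase "the set of all distinct multiplicative group characters $\dots$" in the statement is well-posed. With that in hand, the proof is immediate from \cref{rmk:characters}: the character $\chi_\varphi$ produced there is one of the finitely many characters attached to common eigenvectors in $C$, hence equals some $\chi_i$, and $\chi_i(\varphi)=\rho(\varphi)$ by construction.
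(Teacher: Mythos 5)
Your argument is correct and is essentially the paper's own proof, just phrased directly rather than by contradiction: both rest on applying \cref{prop:common-ev} (via \cref{rmk:characters}) to $\varphi$ to produce a character $\chi_\varphi$ with $\chi_\varphi(\varphi)=\rho(\varphi)$ and then invoking completeness of the list $\{\chi_1,\dots,\chi_m\}$ to conclude $\chi_\varphi=\chi_i$ for some $i$. (Your closing appeal to \cref{lemma:distinct-characters} for collinearity is unnecessary, since $\varphi(v_i)=\chi_i(\varphi)v_i$ holds by the very definition of $\chi_i$ being associated with $v_i$, but this is harmless.)
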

\begin{proof}
Suppose to the contrary that there exists some \(\varphi \in \cG\) such that \(\chi_i(\varphi) \neq \rho(\varphi)\) for any \(1\leq i\leq m\).
Then by \cref{prop:common-ev}, there is a group character \(\chi_\varphi\) of $\cG$ associated with a common eigenvector \(v_\varphi \in C\) such that \(\chi_\varphi(\varphi) = \rho(\varphi)\), i.e., \(\varphi(v_\varphi) = \rho(\varphi)v_\varphi\).
Clearly, this new character \(\chi_\varphi\) is different from any other \(\chi_i\), a contradiction.
\end{proof}

We will use the following lemma to strengthen eigenequations modulo numerical equivalence to equations modulo $\bR$-linear equivalence (see \cref{lemma:linear-equiv}).
For the sake of completeness, we provide its proof here. 

\begin{lemma}
\label{lem-operator-infinite-dim}
Let \(R\subseteq S\) be integral domains, \(M\) an (unnecessarily finitely generated) \(R\)-module and \(\varphi\colon M\to M\) an \(R\)-linear map such that \(P(\varphi)=0\) for some polynomial \(P(t)\in R[t]\).
Denote the field of fractions of \(S\) by \(K\). 
Let \(M_K\coloneqq M\otimes_R K\) be the vector space over \(K\) and \(\varphi_K\coloneqq \varphi\otimes_R\id_K\) the induced $K$-linear map on \(M_K\).
Let \(s\in S\) such that \(P(s)\neq 0\) in $S$.
Then \(\varphi_K-s\id_{M_K}\) is an isomorphism of $M_K$.
\end{lemma}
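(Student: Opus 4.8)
The statement is purely algebraic: we want to show that $\varphi_K - s\,\id_{M_K}$ is an isomorphism of the $K$-vector space $M_K$, given that $P(\varphi)=0$ for some $P(t)\in R[t]$ and $P(s)\neq 0$ in $S$ (hence $P(s)\neq 0$ in $K$, since $S\hookrightarrow K$). The key idea is the standard ``Bezout/factor out the root'' trick: since $s$ is (essentially) not a root of $P$, the linear factor $t-s$ is coprime to $P(t)$ over $K$, so we can write $1$ as a combination of $(t-s)$ and $P(t)$ with coefficients in $K[t]$, and then specialize $t\mapsto\varphi_K$ to produce an explicit two-sided inverse of $\varphi_K - s\,\id$.

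\begin{proof}
Since $S$ is an integral domain with field of fractions $K$ and $P(s)\neq 0$ in $S$, we have $P(s)\neq 0$ in $K$ as well. Write $P(t)=\sum_{i=0}^d a_i t^i$ with $a_i\in R\subseteq S\subseteq K$. Perform polynomial division of $P(t)$ by the monic linear polynomial $t-s$ in $K[t]$:
\[
P(t) = (t-s)\,Q(t) + P(s),\qquad Q(t)\in K[t].
\]
Because $P(s)$ is a nonzero element of $K$, this rearranges to
\[
\frac{-1}{P(s)}\,(t-s)\,Q(t) + \frac{1}{P(s)}\,P(t) = 1 \quad\text{in }K[t].
\]
Now specialize $t$ to the $K$-linear endomorphism $\varphi_K$ of $M_K$. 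Note that $P(\varphi_K) = P(\varphi)\otimes_R \id_K = 0$ because $P(\varphi)=0$ on $M$ (here we use that $P$ has coefficients in $R$, so the base change to $K$ is compatible with applying $P$). Since $\varphi_K$ and $s\,\id_{M_K}$ commute, the ring $K[\varphi_K]\subseteq \End_K(M_K)$ is commutative, and evaluating the displayed polynomial identity at $t=\varphi_K$ gives
\[
\left(\frac{-1}{P(s)}\,Q(\varphi_K)\right)\circ(\varphi_K - s\,\id_{M_K}) = \id_{M_K},
\]
and likewise $(\varphi_K - s\,\id_{M_K})\circ\left(\tfrac{-1}{P(s)}\,Q(\varphi_K)\right) = \id_{M_K}$ by commutativity. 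Hence $\varphi_K - s\,\id_{M_K}$ is invertible in $\End_K(M_K)$, with inverse $-\tfrac{1}{P(s)}\,Q(\varphi_K)$, i.e., it is an isomorphism of $M_K$.
\end{proof}

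There is essentially no obstacle here; the only points requiring a word of care are (i) that $P(s)\neq 0$ in $S$ forces $P(s)\neq 0$ in $K$, which is immediate since $S\hookrightarrow K$ is injective, and (ii) that $P(\varphi)=0$ implies $P(\varphi_K)=0$, which holds because $P$ has coefficients in $R$ and base change $-\otimes_R K$ is functorial and $R$-linear. No finiteness of $M$ is needed, which is exactly why the lemma is stated for arbitrary modules — the argument is a formal identity in the (commutative) subring $K[\varphi_K]$ of $\End_K(M_K)$, so it applies verbatim in infinite dimensions. This is precisely the form needed later to upgrade numerical eigen-equations to $\bR$-linear ones in \cref{lemma:linear-equiv}, taking $R=\bZ$ or $\bR$, $M=\Pic(X)$ or a suitable subgroup, $\varphi$ a pullback operator, and $s$ an eigenvalue living in a larger field.
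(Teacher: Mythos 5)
Your proof is correct, but it takes a genuinely different route from the paper's. You construct an explicit two-sided inverse: dividing $P(t)$ by $t-s$ in $K[t]$ gives $P(t)=(t-s)Q(t)+P(s)$ with $P(s)\neq 0$ in $K$, and evaluating at $\varphi_K$ (where $P(\varphi_K)=0$) exhibits $-\tfrac{1}{P(s)}Q(\varphi_K)$ as the inverse of $\varphi_K-s\id_{M_K}$. The paper instead argues in two steps: injectivity follows because any $v$ with $\varphi_K(v)=sv$ satisfies $0=P(\varphi_K)(v)=P(s)v$, hence $v=0$; surjectivity is then deduced by noting that any $w\in M_K$ lies in the finite-dimensional $\varphi_K$-invariant subspace spanned by $\{w,\varphi_K(w),\varphi_K^2(w),\dots\}$ (finite-dimensional precisely because $P(\varphi_K)(w)=0$), on which an injective endomorphism is automatically surjective. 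Your Bezout-style argument buys an explicit formula for the inverse and dispenses with any dimension count, handling infinite-dimensional $M_K$ in one stroke; the paper's argument avoids forming the quotient polynomial $Q$ but must localize to finite-dimensional pieces to upgrade injectivity to surjectivity. Both hinge on the same two facts you flag explicitly — that $P(s)\neq 0$ in $S$ implies $P(s)$ is invertible in $K$, and that $P(\varphi_K)=(P(\varphi))_K=0$ — so either proof is acceptable here.
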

\begin{proof}
We first prove that \(\varphi_K-s\id_{M_K}\) is injective.
Let $v\in M_K$ such that $(\varphi_K-s\id_{M_K})(v)=0$, i.e., $\varphi_K(v) = s v$.
It is easy to verify that \(P(\varphi_K)=(P(\varphi))_K=0\).
Then we have 
\[
0 = P(\varphi_{K})(v) = P(s) v.
\]
As $P(s)\neq 0$ is invertible in $K$, it follows that $v=0$.

We next show that \(\varphi_K-s\id_{M_K}\) is also surjective.
Denote the degree of the polynomial \(P(t)\) by $n$. 
Let \(w\in M_K\) be arbitrary.
As \(P(\varphi_K)(w)=0\), there exist \(c_0,\dots,c_{n-1}\in K\) (or rather, in the field of fractions of $R$), such that
\[
\varphi_K^n(w) = c_{n-1}\varphi_K^{n-1}(w) + \cdots + c_1\varphi_K(w) + c_0 w.
\]
In other words, the vector space $B$ generated by $\{w,\varphi_K(w),\varphi_K^2(w),\dots\}$ over $K$ is a finite-dimensional $\varphi_K$-invariant subspace of $M_K$.
It follows that the restriction map \((\varphi_K-s\id_{M_K})|_B\) of \(\varphi_K-s\id_{M_K}\) on $B$ is also injective.
As $B$ is of finite dimension, \((\varphi_K-s\id_{M_K})|_B\) is surjective.
Since $w\in M_K$ is arbitrary, we see that \(\varphi_K-s\id_{M_K}\) itself is surjective.
\end{proof}

\subsection{Automorphisms and divisors associated with abelian groups of maximal dynamical rank}
\label{subsec:construction}

Below is the main result of this section.
Given an abelian subgroup $\bZ^{n-1}\cong G\leq \Aut(X)$ of positive entropy, we construct $n$ distinguished automorphisms in $G$ and a nef and big $\bR$-divisor associated with $G$.
This construction forms a crucial ingredient for defining a canonical height in \Cref{section:can-ht} (see \cref{thm:A-full}).

\begin{theorem}
\label{thm:nef-big}
Let $X$ be a smooth projective variety of dimension $n\geq 2$ defined over $\Qbar$ and $G \cong \bZ^{n-1}$ a free abelian group of automorphisms of $X$ of positive entropy.
Then the following assertions hold.
\begin{enumerate}[label=\emph{(\arabic*)}, ref=(\arabic*)]
\item \label{thm:nef-big-1} There are exactly $n$ distinct multiplicative group characters $\chi_i \colon G \to (\bR_{>0}, \times)$ with $1\leq i \leq n$, where each $\chi_i$ is associated with a common nef eigendivisor $D_i$, i.e., $g^*D_i \num \chi_i(g)D_i$ for any $g\in G$.
\item \label{thm:nef-big-2} $D_1\cdots D_n \in \bR_{>0}$; in particular, $D\coloneqq \sum_{i=1}^n D_i$ is a nef and big $\bR$-divisor on $X$.
\item \label{thm:nef-big-3} For any $1\leq i\leq n$, there is some $g_i\in G$ such that $\chi_j(g_i) < 1$ for all $j\neq i$, $\chi_i(g_i) = \lambda_1(g_i)$, and $g_i^*D_i \sim_\bR \lambda_1(g_i)D_i$.
\end{enumerate}
\end{theorem}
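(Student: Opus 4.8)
The plan is to transfer the $G$-action to the finite-dimensional vector space $\Nk$ carrying its nef cone, invoke the cone-theoretic results of \S\ref{subsec:commuting-family}, and then promote the resulting numerical eigendata to intersection-theoretic statements via \cref{lemma:DS-4.4}, following Dinh--Sibony. Since $G$ is abelian, $g\mapsto g^{*}$ is a group homomorphism $G\to\GL(\Nk)$, and $C\coloneqq\Nef(X)$ is a salient, closed, convex, full-dimensional cone invariant under its image. Applying \cref{prop:common-ev}, \cref{rmk:characters}, \cref{lemma:distinct-characters} and \cref{lemma:chi-achieves-rho} yields finitely many pairwise distinct characters $\chi_{1},\dots,\chi_{p}\colon G\to(\bR_{>0},\times)$ with $p\le\rho(X)$, each carried by a nonzero nef common eigendivisor $D_{i}$ (so $g^{*}D_{i}\num\chi_{i}(g)D_{i}$ for every $g\in G$), with the $D_{i}$ linearly independent, and with the key property that for each $g\in G$ the spectral radius $\rho(g^{*}|_{\Nk})$ equals $\chi_{i}(g)$ for some $i$. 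Since $g$ is an automorphism, $\rho(g^{*}|_{\Nk})=\lambda_{1}(g)$, so writing $u_{i}\coloneqq\log\chi_{i}\in\Hom(G,\bR)\cong\bR^{n-1}$ we obtain $\log\lambda_{1}(g)=\max_{i}u_{i}(g)$ for all $g\in G$.

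Next I would handle the easy inequality $p\le n$ together with the needed positivity. An induction on $k$ based on \cref{lemma:DS-4.4} shows that $D_{i_{1}}\cdots D_{i_{k}}\not\wnum 0$ for any $k\le n$ distinct indices: for $k=1$ this is the standard fact that a nonzero nef $\bR$-divisor is not weakly numerically trivial, and for the step one writes $D_{i_{1}}\cdots D_{i_{k}}=(D_{i_{1}}\cdots D_{i_{k-2}})\cdot D_{i_{k-1}}\cdot D_{i_{k}}$, applies the inductive hypothesis to the two $(k-1)$-fold subproducts, picks (using $\chi_{i_{k-1}}\ne\chi_{i_{k}}$) some $f\in G$ with $\chi_{i_{k-1}}(f)\ne\chi_{i_{k}}(f)$, and invokes \cref{lemma:DS-4.4} with $j=k-1$. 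In particular, if there were $n+1$ distinct characters, then $D_{1}\cdots D_{n}>0$ (nonzero by the above, and positive since the $D_{i}$ are nef) and $D_{1}\cdots D_{n-1}D_{n+1}>0$; as an automorphism preserves top self-intersection numbers, this forces $\sum_{i=1}^{n}u_{i}=0=\sum_{i=1}^{n-1}u_{i}+u_{n+1}$, hence $u_{n}=u_{n+1}$, contradicting distinctness. Thus $p\le n$. The reverse inequality $p\ge n$ is the crux, and I expect it to be the main obstacle: it is genuinely geometric (the linear-algebraic data above are consistent with $p<n$) and is essentially the Dinh--Sibony structure theorem for abelian groups of maximal dynamical rank. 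I would prove it by their inductive construction: produce the $\chi_{i}$ and $D_{i}$ with $D_{1}\cdots D_{k}\not\wnum 0$ one at a time, and whenever $k<n$ apply \cref{prop:common-ev} to a suitable cone attached to the class $D_{1}\cdots D_{k-1}$ to extract a new common nef eigendirection, using positive entropy to guarantee that it is new (so $u_{k+1}\notin\{u_{1},\dots,u_{k}\}$) and \cref{lemma:DS-4.4} to guarantee that it enlarges the product; positive entropy is precisely what prevents the process from halting before $k=n$, and the same analysis yields that the $u_{i}$ are affinely independent with $\sum_{i}u_{i}=0$, i.e.\ they are the vertices of an $(n-1)$-simplex in $\Hom(G,\bR)$ with the origin as its barycentre. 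Granting $p=n$ we get $D_{1}\cdots D_{n}>0$, so $D\coloneqq\sum_{i=1}^{n}D_{i}$ is nef with $D^{n}\ge n!\,D_{1}\cdots D_{n}>0$ and hence big; this gives parts (1) and (2).

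For part (3): since the origin is the barycentre of the simplex $\operatorname{conv}(u_{1},\dots,u_{n})$, it lies in no facet, so $0\notin\operatorname{conv}\{u_{j}:j\ne i\}$ for each $i$; by separation there is $g_{i}\in G\otimes\bR$ with $u_{j}(g_{i})<0$ for all $j\ne i$, and as such elements form an open set we may take $g_{i}\in G$. Then $u_{j}(g_{i})<0<u_{i}(g_{i})=-\sum_{j\ne i}u_{j}(g_{i})$, so $\chi_{j}(g_{i})<1$ for $j\ne i$, and---since $\chi_{1},\dots,\chi_{n}$ are all the characters---$\chi_{i}(g_{i})=\max_{k}\chi_{k}(g_{i})=\lambda_{1}(g_{i})$. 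It remains to upgrade $g_{i}^{*}D_{i}\num\lambda_{1}(g_{i})D_{i}$ to $\bR$-linear equivalence. The $\bR$-divisor $\delta_{i}\coloneqq g_{i}^{*}D_{i}-\lambda_{1}(g_{i})D_{i}$ is numerically trivial, hence lies in $\Pic^{0}(X)_{\bR}=\ker(\Pic(X)_{\bR}\to\Nk)$. Now $g_{i}^{*}$ acts on $\Pic^{0}(X)$ as an automorphism of the abelian variety $\pic^{0}_{X/\Qbar}$, so it is killed by its characteristic polynomial $P(t)\in\bZ[t]$; moreover $P(\lambda_{1}(g_{i}))\ne 0$, because $T_{\ell}\Pic^{0}(X)\cong H^{1}_{\et}(X,\bZ_{\ell}(1))$ with $g_{i}^{*}$ acting trivially on the Tate twist, so $P$ is the characteristic polynomial of $g_{i}^{*}$ on $H^{1}(X)$, whose eigenvalues $\mu$ satisfy $|\mu|^{2}\le\rho(g_{i}^{*}|_{H^{1,1}(X)})=\lambda_{1}(g_{i})$ (fixing an embedding $\Qbar\hookrightarrow\bC$: if $g_{i}^{*}\omega=\mu\omega$ then $g_{i}^{*}(\omega\wedge\overline{\omega})=|\mu|^{2}\,\omega\wedge\overline{\omega}$ in $H^{1,1}(X)$), whence $|\mu|<\lambda_{1}(g_{i})$. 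By \cref{lem-operator-infinite-dim} applied with $R=\bZ$, $S=\bR$, $M=\Pic^{0}(X)$, $\varphi=g_{i}^{*}$ and $s=\lambda_{1}(g_{i})$, the map $g_{i}^{*}-\lambda_{1}(g_{i})\id$ is bijective on $\Pic^{0}(X)_{\bR}$; solving $(g_{i}^{*}-\lambda_{1}(g_{i})\id)\eta_{i}=-\delta_{i}$ with $\eta_{i}\in\Pic^{0}(X)_{\bR}$ and replacing $D_{i}$ by the numerically equivalent (hence still nef) divisor $D_{i}+\eta_{i}$ yields $g_{i}^{*}D_{i}\sim_{\bR}\lambda_{1}(g_{i})D_{i}$, completing the proof.
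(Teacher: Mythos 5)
Your proposal reproduces the paper's architecture for parts (1)--(2) up to the inequality $p\le n$ (same use of \cref{prop:common-ev}, \cref{lemma:chi-achieves-rho}, the induction via \cref{lemma:DS-4.4}, and the projection-formula argument), and your part (3) and the upgrade to $\bR$-linear equivalence via \cref{lem-operator-infinite-dim} are essentially the paper's \cref{lemma:linear-equiv} with Dinh's bound on the $H^1$-eigenvalues replaced by a direct $\omega\wedge\overline{\omega}$ computation. However, there is a genuine gap at exactly the point you flag as ``the crux'': you never prove $p\ge n$. You announce that you ``would prove it by [Dinh--Sibony's] inductive construction,'' applying \cref{prop:common-ev} to ``a suitable cone attached to the class $D_1\cdots D_{k-1}$'' and ``using positive entropy to guarantee that [the new eigendirection] is new,'' but none of this is carried out: the cone is not specified, and no argument is given for why positive entropy forces a new character at each stage. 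Since parts (2) and (3) (including the affine independence of the $u_i$ that your separation argument relies on) all hinge on $p=n$, the proof as written is incomplete.

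What you missed is that the paper closes this gap with a short pigeonhole argument requiring no new geometric construction (\cref{lemma:character-non-vanishing}). The map $\pi\colon G\to\bR^{p}$, $g\mapsto(\log\chi_1(g),\dots,\log\chi_p(g))$, is injective (some coordinate of $\pi(g)$ equals $\log\lambda_1(g)>0$ for $g\ne\id$) and has discrete image by the discreteness of dynamical degrees (\cref{lem:dd-lower-bound}), so $\pi(G)$ is a lattice of rank $n-1$ in $\bR^{p}$; in particular $p\ge n-1$. If $p=n-1$, then $\pi(G)$ is a full lattice in $\bR^{p}$ and therefore meets the open negative orthant, producing $g_\diamond\ne\id$ with $\chi_i(g_\diamond)<1$ for every $i$ --- contradicting \cref{lemma:chi-achieves-rho}, which forces some $\chi_{i}(g_\diamond)=\lambda_1(g_\diamond)>1$. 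Hence $p\ge n$, and with your $p\le n$ one gets $p=n$. (The same lattice structure then yields, once $\pi(G)\subseteq\{\sum_j x_j=0\}$, that the projection omitting the $i$-th coordinate is an isomorphism onto $\bR^{n-1}$ carrying $\pi(G)$ to a full lattice, which is the paper's route to $g_i$ in part (3) and also supplies the affine independence you asserted without proof.) I recommend replacing your sketched induction by this argument.
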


\begin{remark}\label{rmk_DS_eigendivisor}
The construction of the common nef eigendivisors $D_i$ is essentially attributed to Dinh and Sibony, who initially constructed $n-1$ of them and then separately constructed the last one (see the last paragraph of the proof of \cite[Th\'eor\`eme~4.4]{DS04}).
This separateness is a crucial aspect, preventing a direct use of their construction to define a canonical height in an appropriate way.
Upon revisiting \cite{DS04} and examining each character \(\chi_i\colon G\to (\bR_{>0},\times)\), we demonstrate that there exist $n$ distinguished automorphisms \(g_1,\dots,g_n\) in the rank $n-1$ abelian group $G$ such that \(\chi_i(g_i)=\lambda_1(g_i)\) and \(\chi_j(g_i)<1\) for all \(j\neq i\).
Consequently, all $n$ nef eigendivisors $D_i$ share the same status as in \cref{thm:nef-big}.
It is worth mentioning that the automorphisms \(g_1,\dots,g_n\) we constructed may not necessarily be the generators of \(G\).
Instead, any \(n-1\) of them generate a finite index subgroup of \(G\) (see \cref{rmk:generators}).
\end{remark}

Before proving the above \cref{thm:nef-big} at the end of this subsection, we prepare all necessary ingredients.
Recall that $\N^1(X)_\bR$ is the real N\'eron--Severi space of $\bR$-divisors on $X$ modulo numerical equivalence $\num$.
The nef cone $\Nef(X)$, consisting of the classes of all nef $\bR$-divisors on $X$, is a salient closed convex cone in $\N^1(X)_\bR$ of full dimension. 
The pullback action of automorphisms on $\N^1(X)_\bR$ induces a natural representation of $\Aut(X)$:
\begin{align*}
\Aut(X) \to \GL(\N^1(X)_\bR), \quad g \mapsto g^*|_{\N^1(X)_\bR}.
\end{align*}
Note that any automorphism preserves the nef cone $\Nef(X) \subseteq \N^1(X)_\bR$.
For any subgroup $G$ of $\Aut(X)$, denote by $G|_{\N^1(X)_\bR} \leq \GL(\N^1(X)_\bR)$ the image of the above representation.

First of all, as a straightforward application of the previous discussion in \S\ref{subsec:commuting-family} to the triplet $(\N^1(X)_\bR, G|_{\N^1(X)_\bR}, \Nef(X))$, we get some nonzero common nef eigendivisors $D_i$ that naturally define multiplicative group characters $\chi_i$ of $G|_{\N^1(X)_\bR}$.
Composing them with the group homomorphism $G \to G|_{\N^1(X)_\bR}$ yields group characters of $G$ itself, still denoted by $\chi_i$.
In summary, we obtain the following.

\begin{proposition}
\label{prop:characters-of-G}
Let $X$ be a smooth projective variety of dimension $n\geq 2$ defined over $\Qbar$ and $G$ an abelian subgroup of $\Aut(X)$.
Then there are finitely many distinct multiplicative group characters $\chi_i \colon G \to (\bR_{>0}, \times)$ with $1\leq i \leq m$, where each $\chi_i$ is associated with a common nef eigendivisor $D_i$, i.e., $g^*D_i \num \chi_i(g)D_i$ for any $g\in G$.
Further, for any $g\in G$, there is some $1\leq i\leq m$ such that $\chi_i(g) = \rho(g^*|_{\N^1(X)_\bR}) = \lambda_1(g)$, the first dynamical degree of $g$.
\end{proposition}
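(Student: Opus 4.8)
The plan is to assemble the linear-algebra results of \S\ref{subsec:commuting-family} applied to the triplet $(\N^1(X)_\bR,\, G|_{\N^1(X)_\bR},\, \Nef(X))$, and then to identify the spectral radius $\rho(g^*|_{\N^1(X)_\bR})$ with the first dynamical degree $\lambda_1(g)$.

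First I would check that the hypotheses of \S\ref{subsec:commuting-family} are met: $V\coloneqq\N^1(X)_\bR$ is a finite-dimensional $\bR$-vector space; the nef cone $C\coloneqq\Nef(X)$ is a salient closed convex cone of full dimension (recalled in \S\ref{section:prelim}); and $\cG\coloneqq G|_{\N^1(X)_\bR}$ is an abelian subgroup of $\GL(V)$ leaving $C$ invariant. For the last point, note that each $g^*$ is invertible on $\N^1(X)_\bR$ with inverse $(g^{-1})^*$, the map $g\mapsto g^*$ is a group anti-homomorphism (so that $(gh)^*=h^*g^*=(hg)^*=g^*h^*$, whence its image is abelian because $G$ is), and every automorphism pulls nef classes back to nef classes. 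With this in place, \cref{rmk:characters} (the group-theoretic form of \cref{prop:common-ev}, which itself rests on Birkhoff's \cref{thm:Birkhoff}) produces, for each $\bar g\in\cG$, a nonzero common eigenvector $v_{\bar g}\in C$ for all of $\cG$ together with an associated multiplicative character $\chi_{\bar g}\colon \cG\to(\bR_{>0},\times)$ satisfying $\chi_{\bar g}(\bar g)=\rho(\bar g)$.

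Next I would invoke \cref{lemma:distinct-characters}: any family of characters of $\cG$ arising from common eigenvectors lying in $C$ has at most $\dim_\bR V=\rho(X)$ members, since distinct characters force the corresponding eigenvectors to be linearly independent. Hence the set of all characters obtained as above is finite; enumerate its distinct members $\chi_1,\dots,\chi_m$ with $m\leq\rho(X)$, and for each $i$ fix an $\bR$-divisor $D_i$ representing the corresponding common nef eigenvector. Pulling back along the surjection $G\twoheadrightarrow\cG$ (which is injective on characters, so the $\chi_i$ remain distinct) yields the asserted characters of $G$, with $g^*D_i\num\chi_i(g)D_i$ for every $g\in G$. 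The remaining clause is then precisely \cref{lemma:chi-achieves-rho} applied to $\cG$: for every $\bar g\in\cG$ there is some $1\leq i\leq m$ with $\chi_i(\bar g)=\rho(\bar g)$, i.e., for every $g\in G$ some $\chi_i(g)=\rho(g^*|_{\N^1(X)_\bR})$.

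Finally I would establish $\rho(g^*|_{\N^1(X)_\bR})=\lambda_1(g)$. Since $g$ is an automorphism, $(g^m)^*=(g^*)^m$ on $\N^1(X)_\bR$, and, fixing an ample divisor $H_X$, one bounds $((g^m)^*H_X\cdot H_X^{n-1})^{1/m}$ from above by $\rho(g^*|_{\N^1(X)_\bR})$ via operator-norm estimates; for the reverse inequality one uses that $g^*$ preserves the nef cone, so by Birkhoff's \cref{thm:Birkhoff} it admits a nef eigenvector of eigenvalue $\rho(g^*|_{\N^1(X)_\bR})$, which pairs strictly positively with $H_X^{n-1}$. Alternatively, this identity for automorphisms is standard and can simply be quoted; see \cref{defi:dynamical-degrees} and \cite{DS05,Tru20,Dang20}. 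I expect the whole argument to be routine, being an assembly of \cref{prop:common-ev}, \cref{lemma:distinct-characters}, and \cref{lemma:chi-achieves-rho}; the only points demanding any care are this last identification of the spectral radius with $\lambda_1(g)$ and the bookkeeping that the anti-homomorphism $g\mapsto g^*$ does not disturb commutativity.
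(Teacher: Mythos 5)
Your proposal is correct and follows exactly the paper's route: the paper's proof is a one-line citation of \cref{prop:common-ev}, \cref{rmk:characters}, \cref{lemma:distinct-characters}, and \cref{lemma:chi-achieves-rho} applied to the triplet $(\N^1(X)_\bR,\, G|_{\N^1(X)_\bR},\, \Nef(X))$, which is precisely your assembly. Your extra care about the anti-homomorphism $g\mapsto g^*$ and the identification $\rho(g^*|_{\N^1(X)_\bR})=\lambda_1(g)$ (which the paper treats as standard, cf.\ the proof of \cref{lem:dd-lower-bound}) is welcome but not a departure from the paper's argument.
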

\begin{proof}
It follows readily from \cref{prop:common-ev,rmk:characters,lemma:distinct-characters,lemma:chi-achieves-rho}.
\end{proof}

By the above \cref{prop:characters-of-G}, following \cite{DS04}, we define a group homomorphism
\begin{align*}
\pi \colon G & \to (\bR^{m}, +) \\
g & \mapsto (\log\chi_1(g),\dots,\log\chi_m(g)).
\end{align*}

\begin{lemma}[{cf.~\cite[Corollaire~2.2]{DS04}}]
\label{lem:dd-lower-bound}
Let \(X\) be a smooth projective variety of dimension \(n\geq 2\) defined over \(\Qbar\).
Then the set of the first dynamical degrees of surjective endomorphisms of \(X\) is discrete in \([1,+\infty)\).
\end{lemma}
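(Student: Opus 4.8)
The plan is to reduce the statement to an elementary finiteness fact about monic integer polynomials with bounded roots. For a surjective endomorphism $f$ of $X$, the pullback $f^*$ acts on the N\'eron--Severi group $\N^1(X)$, a free abelian group of rank $r\coloneqq \rho(X)$ (the Picard number of $X$), and it preserves the integral lattice $\N^1(X)\subseteq\N^1(X)_\bR$; hence, after fixing a $\bZ$-basis, $f^*$ is represented by an integer matrix $A_f\in\Mat_r(\bZ)$. Since $f$ is a morphism, $(f^m)^*=(f^*)^m$, and $f^*$ preserves the salient full-dimensional nef cone $\Nef(X)$; consequently, by Birkhoff's \cref{thm:Birkhoff} together with the standard theory of dynamical degrees (see \cite{DS05,Tru20,Dang20}), the first dynamical degree satisfies $\lambda_1(f)=\rho\bigl(f^*|_{\N^1(X)_\bR}\bigr)=\rho(A_f)$, and it is an eigenvalue of $A_f$.

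First I would record that $\lambda_1(f)$ is then a root of the characteristic polynomial $P_f(t)\coloneqq\det(t\,I-A_f)\in\bZ[t]$, which is monic of degree $r$; crucially, this degree is \emph{uniform}, depending only on $X$ and not on $f$. Next, writing $\mu_1,\dots,\mu_r\in\bC$ for the eigenvalues of $A_f$, one has $|\mu_j|\le\rho(A_f)=\lambda_1(f)$ for every $j$ by the very definition of the spectral radius; hence each coefficient of $P_f$, being up to sign an elementary symmetric function $e_k(\mu_1,\dots,\mu_r)$, satisfies $|e_k|\le\binom{r}{k}\lambda_1(f)^k\le 2^r\lambda_1(f)^r$ whenever $\lambda_1(f)\ge 1$.

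Now fix any $L\ge 1$. If $\lambda_1(f)\le L$, the bound above forces every coefficient of $P_f$ to lie in the finite set $\bZ\cap[-2^rL^r,\,2^rL^r]$, so only finitely many polynomials $P_f$ can occur; each of them has at most $r$ roots, so only finitely many real numbers, and in particular only finitely many values of $\lambda_1(f)$, lie in $[1,L]$. As $L$ was arbitrary, the set $\{\lambda_1(f): f \text{ a surjective endomorphism of } X\}$ meets every bounded subinterval of $[1,+\infty)$ in a finite set, and therefore has no accumulation point in $[1,+\infty)$; this is exactly the asserted discreteness.

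I do not anticipate a serious obstacle. The one point deserving care is the identity $\lambda_1(f)=\rho\bigl(f^*|_{\N^1(X)_\bR}\bigr)$ for a merely \emph{surjective} endomorphism rather than an automorphism: this is where smoothness, projectivity, and the Perron--Frobenius/Birkhoff input are genuinely used, and it is classical (covered by the references already cited in \cref{defi:dynamical-degrees}). It is also worth noting explicitly in the write-up that ``discrete in $[1,+\infty)$'' is meant in the strong, locally finite sense, which is precisely what the finiteness-on-bounded-intervals argument delivers.
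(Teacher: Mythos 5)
Your proposal is correct and follows essentially the same route as the paper: identify $\lambda_1(f)$ with the spectral radius of the integer matrix of $f^*$ on $\N^1(X)$, observe it is the maximal modulus of the roots of a monic integer polynomial of uniformly bounded degree $\rho(X)$, bound the coefficients via the bounded roots, and conclude finiteness on every interval $[1,L]$. Your write-up is in fact slightly more explicit than the paper's (you spell out the elementary symmetric function bound $|e_k|\le\binom{r}{k}\lambda_1(f)^k$), but the argument is the same.
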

\begin{proof}
For any positive number \(M>1\), we will show that the following set
\[
S_M\coloneqq\{\lambda_1(f) : f\colon X\to X, \lambda_1(f)\leq M\}
\]
is finite. 
We note that the first dynamical degree \(\lambda_1(f)\) of a surjective endomorphism \(f\colon X\to X\) is the spectral radius of \(f^*|_{\N^1(X)_\bR}\), which is induced by \(f^*|_{\N^1(X)}\).
Since \(\N^1(X)\) is a free abelian group of rank $\rho\coloneqq \rho(X)$, all the eigenvalues of \(f^*|_{\N^1(X)_\bR}\) are algebraic integers.
That is, every $\lambda_1(f)$ is the maximal modulus of the roots of a monic polynomial of degree \(\rho\) with integer coefficients.
Let \(P(t) = t^\rho + c_1t^{\rho-1} + \cdots + c_\rho\) be such a polynomial such that the maximal modulus of all roots \(\alpha_1,\dots,\alpha_\rho\), counting with multiplicities, is no more than \(M\).
Then we have 
\[
s(t)=\prod_{i=1}^\rho(t-\alpha_i).
\]
By the triangular inequality and the fact that \(|\alpha_i|\leq M\) holds for each \(i\), we obtain that each \(|c_j|\) has an upper bound.
In particular, \(\#S_M\) is finite.
We finish the proof of the lemma.
\end{proof}

\begin{proposition}[{cf.~\cite[Proposition~4.2]{DS04}}]
\label{prop:inj+discrete}
Let $G\leq \Aut(X)$ be an abelian group of automorphisms of positive entropy.
Then $\pi$ is injective and its image $\pi(G)$ is discrete in $\bR^m$.
In particular, $G$ is free abelian and $\pi(G)$ is a lattice in $\bR^m$ of rank $r \leq m$.
\end{proposition}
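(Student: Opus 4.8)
The plan is to prove injectivity first, then discreteness, and finally deduce the structural consequence. For injectivity, suppose $g\in G$ lies in $\ker\pi$, i.e.\ $\chi_i(g)=1$ for every $1\leq i\leq m$. By the last assertion of \cref{prop:characters-of-G}, there is some index $i$ with $\chi_i(g)=\lambda_1(g)$, and therefore $\lambda_1(g)=1$. Since $G$ is of positive entropy, this forces $g=\id$. Hence $\ker\pi=\{\id\}$ and $\pi$ is injective.

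For discreteness of $\pi(G)$ in $\bR^m$, I would argue that $\pi(G)$ has no accumulation point at the origin (which, since $\pi(G)$ is a subgroup of $(\bR^m,+)$, is equivalent to discreteness). Consider the coordinate-wise behaviour: the first coordinate of $\pi(g)$ is $\log\chi_1(g)$, and more importantly, for each $g\in G$ one has $\max_i \chi_i(g)=\lambda_1(g)$ by \cref{prop:characters-of-G}; applying this to $g^{-1}$ and using that $\chi_i$ are group characters gives control on $\min_i\chi_i(g)$ as well, via $\lambda_1(g^{-1})=\max_i\chi_i(g^{-1})=\max_i\chi_i(g)^{-1}=(\min_i\chi_i(g))^{-1}$. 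So the sup-norm $\norm{\pi(g)}_\infty = \max(\log\lambda_1(g),\,\log\lambda_1(g^{-1}))$. If $\pi(g)$ is close to $0$ but nonzero, then both $\lambda_1(g)$ and $\lambda_1(g^{-1})$ are close to $1$; since $g\neq\id$ (as $\pi$ is injective) and $G$ has positive entropy, $\lambda_1(g)>1$. By \cref{lem:dd-lower-bound}, the set of first dynamical degrees of endomorphisms of $X$ is discrete in $[1,+\infty)$, so there is a uniform gap $\delta>0$ with $\lambda_1(h)\geq 1+\delta$ for every $h\in G\setminus\{\id\}$. Hence $\norm{\pi(g)}_\infty\geq\log(1+\delta)$ for all $g\in G\setminus\{\id\}$, which shows $0$ is isolated in $\pi(G)$ and therefore $\pi(G)$ is discrete.

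Finally, since $\pi$ is an injective group homomorphism onto a discrete subgroup of $\bR^m$, and a discrete subgroup of a finite-dimensional real vector space is a free abelian group (a lattice in the subspace it spans) of rank at most $\dim_\bR\bR^m=m$, we conclude that $G\cong\pi(G)$ is free abelian and $\pi(G)$ is a lattice of rank $r\leq m$.

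The main obstacle I anticipate is making the uniform-gap argument in the discreteness step fully rigorous: one must be careful that \cref{lem:dd-lower-bound} gives discreteness of the \emph{set} of dynamical degrees in $[1,+\infty)$, but $1$ itself is in the closure of $\{\lambda_1(h):h\in\Aut(X)\}$ a priori, so the gap above $1$ is not automatic for arbitrary endomorphisms — it is only available here because every nontrivial element of $G$ has positive entropy, so $\lambda_1(h)>1$ and we may intersect with $(1,+\infty)$, on which the dynamical-degree set, being discrete in $[1,+\infty)$, has no accumulation point, in particular none approaching $1$. Tying the norm estimate to this gap through the identity $\norm{\pi(g)}_\infty=\max(\log\lambda_1(g),\log\lambda_1(g^{-1}))$ is the delicate point, and I would state and use that identity explicitly.
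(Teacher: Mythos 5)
Your proposal is correct and follows essentially the same route as the paper: injectivity from the fact that some coordinate of $\pi(g)$ equals $\log\lambda_1(g)>0$ for $g\neq\id$, and discreteness by reducing (via the group structure) to isolating $0$ and invoking \cref{lem:dd-lower-bound} for a uniform gap $\lambda_1(g)\geq 1+\delta$. Your sup-norm identity $\norm{\pi(g)}_\infty=\max(\log\lambda_1(g),\log\lambda_1(g^{-1}))$ is a nice but unnecessary refinement — the single coordinate equal to $\log\lambda_1(g)$ already bounds the norm below, which is all the paper uses.
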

\begin{proof}
For any \(g\neq \id\), it follows from \cref{prop:characters-of-G} that one of the coordinates of \(\pi(g)\) coincides with \(\log\lambda_1(g)\) which is positive; hence \(\pi\) is injective. 
As \(\pi\) is a group homomorphism, to show that the image \(\pi(G)\) is discrete, it is sufficient to show that \(\pi(\id) = 0\) is an isolated point in the image \(\pi(G)\). 
Applying \cref{lem:dd-lower-bound}, we see that for each \(g\neq \id\), \(\log\lambda_1(g)\) has a uniform lower bound (which is independent of \(g\)) and hence \(\{0\}\) is an isolated point in \(\pi(G)\).
Note that the image \(\pi(G)\) is an additive subgroup which is also discrete in \(\bR^m\).
Hence \(\pi(G)\) is a lattice in \(\bR^m\).
Because \(\pi\) is injective, \(G\) is free abelian of rank \(r\leq m\).
\end{proof}

\begin{lemma}
\label{lemma:character-non-vanishing}
Let $G\leq \Aut(X)$ be a free abelian group of rank $r$ of positive entropy.
Let \(\chi_1,\dots,\chi_m\) and \(D_1,\dots,D_m\) be in \cref{prop:characters-of-G}.
Then we have
\[
D_1\cdots D_m\not\wnum 0 \text{ and } r+1 \leq m \leq \min(n, \rho(X)),
\]
where $\rho(X) \coloneqq \dim_\bR \N^1(X)_\bR$ denotes the Picard number of $X$.
\end{lemma}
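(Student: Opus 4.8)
The plan is to combine the Dinh--Sibony non-vanishing machinery (\cref{lemma:DS-4.4}) with the structure of the characters $\chi_i$ coming from \cref{prop:characters-of-G}. First I would establish the inequality $m\le\min(n,\rho(X))$, which is essentially formal: the eigendivisors $D_1,\dots,D_m$ are mutually non-collinear by construction (distinct characters), hence linearly independent by \cref{lemma:distinct-characters}, so $m\le\dim_\bR\N^1(X)_\bR=\rho(X)$; the bound $m\le n$ will follow once we know $D_1\cdots D_m\not\wnum 0$, since an intersection product of more than $n$ divisors on an $n$-dimensional variety is zero (formally, wedging $m$ classes in $\N^1$ against $H^{n-m}$ requires $m\le n$ for the product to be a nonzero $0$-cycle).

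The heart of the argument is $D_1\cdots D_m\not\wnum 0$, proved by induction on the number of factors. The base case $D_1\not\wnum 0$ holds since $D_1$ is a nonzero nef class. For the inductive step, assuming $D_1\cdots D_{j-1}\not\wnum 0$ with $j-1<m$, I want to produce an automorphism $g\in G$ and two of the remaining eigendivisors, say $D_{j}$ and $D_{j'}$ (relabelling within $\{j,\dots,m\}$), such that $g^*$ scales the partial products $D_1\cdots D_{j-1}\cdot D_{j}$ and $D_1\cdots D_{j-1}\cdot D_{j'}$ by \emph{distinct} positive factors; then \cref{lemma:DS-4.4} upgrades the product to include one more factor. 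Concretely, $g^*(D_1\cdots D_{j-1}\cdot D_i)\wnum\big(\prod_{k=1}^{j-1}\chi_k(g)\big)\chi_i(g)\,D_1\cdots D_{j-1}\cdot D_i$, so I need a $g$ with $\chi_{j}(g)\neq\chi_{j'}(g)$ for some pair of indices $i=j,j'$ in the set of still-unused characters, together with the (already inductively guaranteed) non-vanishing of each of $D_1\cdots D_{j-1}\cdot D_{j}$ and $D_1\cdots D_{j-1}\cdot D_{j'}$ --- but wait, that latter non-vanishing is exactly what we are trying to prove, so the induction must be set up more carefully: one inducts on the statement ``after suitable relabelling, $D_1\cdots D_j\not\wnum 0$'' and at each stage uses that $D_1\cdots D_{j-1}\cdot D_i\not\wnum 0$ for \emph{at least two} values of $i$ among the unused indices. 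This last point is where the rank hypothesis $r\ge 1$ (positive entropy) and the injectivity of $\pi$ (\cref{prop:inj+discrete}) enter: because the characters $\chi_1,\dots,\chi_m$ are distinct as functions on $G$, no single $g$ can have $\chi_i(g)$ constant across all unused $i$ unless those characters agreed on all of $G$, contradicting distinctness; so at each inductive stage such a separating $g$ exists.

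I expect the main obstacle to be bookkeeping the induction so that the ``at least two non-vanishing partial products'' hypothesis is maintained: one should argue that among the unused indices, the number of $i$ with $D_1\cdots D_{j-1}\cdot D_i\not\wnum 0$ cannot drop to $1$ before $j$ reaches $m$ --- otherwise the $m-(j-1)$ unused characters would all coincide on $G$, contradicting their distinctness (or, more precisely, one uses that the action of $g^*$ on the $\N^1$-classes of these partial products is diagonalized with the $\chi_i(g)$ as eigenvalues, so a vanishing partial product forces a character collision). Once the inductive step goes through $m$ times we get $D_1\cdots D_m\not\wnum 0$, hence $m\le n$, and combined with $m\le\rho(X)$ and $m\ge r+1$ (this last inequality I would extract from the fact that $\pi(G)\subseteq\bR^m$ is a rank-$r$ lattice together with \cref{lem:dd-lower-bound}: if $m=r$ were possible one could not simultaneously have all characters achieving their respective spectral radii on independent group elements --- this is the ``$+1$'' phenomenon that distinguishes the Dinh--Sibony picture, essentially reflecting that $\rho(g^*)$ is always an eigenvalue but the product of all dynamical degrees being $1$ forces an extra relation), we obtain the full chain $r+1\le m\le\min(n,\rho(X))$ as claimed.
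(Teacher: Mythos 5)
Your overall strategy (induction on the number of factors, \cref{lemma:DS-4.4} plus a character-separating element at each step) is the right one, but three steps have genuine gaps. First, the central induction: your invariant that ``at least two of the partial products $D_1\cdots D_{j-1}\cdot D_i$ over unused $i$ are non-vanishing'' is not maintained by the justification you offer --- the vanishing of such a partial product does not force a collision of characters on $G$, since weak numerical triviality of a cycle is not an eigenvalue statement about $g^*$ on $\N^1(X)_\bR$. The clean fix (and what the paper does) is to strengthen the inductive hypothesis: prove that the product of \emph{any} $k$ distinct divisors among $D_1,\dots,D_m$, for $k\le\min(m,n)$, is not weakly numerically trivial, by induction on $k$. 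Then at the step from $j$ to $j+1$ both required partial products $D_{i_1}\cdots D_{i_{j-1}}\cdot D_{i_j}$ and $D_{i_1}\cdots D_{i_{j-1}}\cdot D_{i_{j+1}}$ are non-vanishing automatically by the hypothesis, and the distinctness of $\chi_{i_j}$ and $\chi_{i_{j+1}}$ supplies the separating $g_\circ$ needed to apply \cref{lemma:DS-4.4}. No bookkeeping of ``unused indices'' is needed.

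Second, your derivation of $m\le n$ is circular: you propose to deduce it from $D_1\cdots D_m\not\wnum 0$, but the induction (via \cref{lemma:DS-4.4}, which requires $j\le n-1$) can only certify non-vanishing of products of at most $n$ factors, so it cannot establish $D_1\cdots D_m\not\wnum 0$ when $m>n$ --- and indeed that statement would be false then. One must rule out $m\ge n+1$ by a separate argument: if $m\ge n+1$, then both $D_1\cdots D_n>0$ and $D_2\cdots D_{n+1}>0$ by the first part, and the projection formula gives $\chi_1(g)\cdots\chi_n(g)=\chi_2(g)\cdots\chi_{n+1}(g)=1$ for all $g\in G$, whence $\chi_1=\chi_{n+1}$, contradicting distinctness. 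Third, your sketch of $m\ge r+1$ is not an argument; the proof is that if $m=r$ then $\pi(G)$ is a full-rank lattice in $\bR^m$, so some $g_\diamond\neq\id$ lies in the negative orthant, i.e.\ $\chi_i(g_\diamond)<1$ for every $i$, contradicting \cref{prop:characters-of-G}, which forces some $\chi_i(g_\diamond)=\lambda_1(g_\diamond)>1$ by the positive entropy assumption. (\cref{lem:dd-lower-bound} is not the relevant ingredient here; it only enters via the discreteness of $\pi(G)$ in \cref{prop:inj+discrete}.)
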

\begin{proof}
We first prove that $D_{i_1}\cdots D_{i_k} \not\wnum 0$ for any $1\leq i_1 < \dots < i_k \leq \min(m,n)$ by induction on $k$.
By the higher-dimensional Hodge index theorem (see, e.g., \cite[Lemma~3.2]{Zhang16}), we have $D_i \not\wnum 0$ for each \(1\leq i\leq m\).
We suppose that the intersection product of any $j\leq k-1$ different divisors choosing from $D_1,\dots,D_m$ is not weakly numerically trivial.
Fix indices $1\leq i_1 < \dots < i_j < i_{j+1} \leq \min(m,n)$.
By inductive hypothesis, we have
\[
D_{i_1}\cdots D_{i_{j-1}} \cdot D_{i_j} \not\wnum 0 \text{ and } D_{i_1}\cdots D_{i_{j-1}} \cdot D_{i_{j+1}} \not\wnum 0.
\]
Since the $\chi_i$ are mutually distinct, $\chi_{i_j}\neq \chi_{i_{j+1}}$, i.e., there is some $g_\circ \in G$ such that $\chi_{i_j}(g_\circ)\neq \chi_{i_{j+1}}(g_\circ)$.
It follows from $G$-invariance of the $D_i$ that
\begin{align*}
g_\circ^*(D_{i_1}\cdots D_{i_{j-1}} \cdot D_{i_j}) & \wnum \chi_{i_1}(g_\circ)\cdots \chi_{i_{j-1}}(g_\circ) \, \chi_{i_j}(g_\circ) (D_{i_1}\cdots D_{i_{j-1}} \cdot D_{i_j}) \ \text{ and} \\
g_\circ^*(D_{i_1}\cdots D_{i_{j-1}}\cdot D_{i_{j+1}}) & \wnum \chi_{i_1}(g_\circ)\cdots \chi_{i_{j-1}}(g_\circ) \, \chi_{i_{j+1}}(g_\circ) (D_{i_1}\cdots D_{i_{j-1}}\cdot D_{i_{j+1}}).
\end{align*}
Noting that \(j\leq k-1\leq n-1\) by assumption, one has $D_{i_1}\cdots D_{i_j} \cdot D_{i_{j+1}} \not\wnum 0$, thanks to \cref{lemma:DS-4.4}.
In other words, we have proved by induction that the product of any $k\leq \min(m,n)$ different divisors from $D_1,\dots,D_m$ is not weakly numerically trivial.

By \cref{prop:inj+discrete,lemma:distinct-characters}, we already know that $r \leq m \leq \rho(X)$.
Suppose to the contrary that there are \(m\geq n+1\) distinct group characters \(\chi_1,\dots,\chi_{n+1},\dots,\chi_m\) of $G$ associated with some common nef eigendivisors $D_1,\dots,D_m$.
By what we just proved, one has $D_1\cdots D_n>0$ and $D_2\cdots D_{n+1}>0$.
Then it follows from the projection formula that for any \(g\in G\),
\[
\chi_1(g)\cdots\chi_n(g)=\chi_2(g)\cdots \chi_{n+1}(g) = 1,
\]
which implies that \(\chi_1=\chi_{n+1}\), a contradiction.
So we get $m\leq n$ and hence $D_1\cdots D_m\not\wnum 0$ as desired.

It remains to show that $m\geq r+1$.
Suppose to the contrary that $m=r$ so that $\pi(G)$ is a complete lattice in $\bR^m$ (see \cref{prop:inj+discrete}).
Namely, $\pi(G)$ spans $\bR^m$.
Therefore, there is some $\id \neq g_\diamond \in G$ such that all $m$ coordinates of $\pi(g_\diamond)$ are negative, i.e., $\chi_i(g_\diamond)<1$ for all $1\leq i\leq m$.
On the other hand, \cref{prop:characters-of-G} asserts that for such $g_\diamond$, there is some $1\leq i_\diamond \leq m$ such that $\chi_{i_\diamond}(g_\diamond)$ equals the first dynamical degree $\lambda_1(g_\diamond)$ of $g_\diamond$, which is strictly greater than $1$ since $g_\diamond$ is of positive entropy.
This is a contradiction.
\end{proof}

\begin{remark}
\label{rmk:R-linear}
Note that all these common nef eigendivisors $D_i$ in \cref{prop:characters-of-G} are constructed numerically.
Namely, they only satisfy eigenequations modulo numerical equivalence.
Though it suffices to define nef canonical height functions in the sense of \cite[Theorem~5(a)]{KS16}, the difference may not be bounded (see \cite[Theorem~5(b)]{KS16}).
In dimension two, Kawaguchi \cite[Lemma~3.8]{Kawaguchi08} managed to improve them to eigenequations modulo $\bR$-linear equivalence so that the difference is indeed bounded.
Such an eigenequation modulo linear equivalence also appears in \cite[Proposition~1.1.3]{Zhang06}, \cite[Lemma~2.3]{NZ10}, and \cite[Theorem~6.4(1)]{MMSZZ22}.
\end{remark}

Below is a higher-dimensional analog of \cite[Lemma~3.8]{Kawaguchi08}.

\begin{lemma}
\label{lemma:linear-equiv}
Let $X$ be a smooth projective variety of dimension $n$ over $\Qbar$ and $f$ a surjective endomorphism of $X$ of positive entropy, i.e., $\lambda_1(f)>1$.
Then the following assertions hold:
\begin{enumerate}[label=\emph{(\arabic*)}, ref=(\arabic*)]
\item \label{lemma:linear-equiv-1} there is a nef $\bR$-divisor $D_f$ on $X$ such that $f^*D_f \num \lambda_1(f)D_f$; further,
\item \label{lemma:linear-equiv-2} for any $D_f$ in the assertion~\emph{\ref{lemma:linear-equiv-1}}, there is a unique nef $\bR$-divisor $D'_f$ on $X$, up to \(\bR\)-linear equivalence, such that $D'_f \num D_f$ and $f^*D'_f \sim_\bR \lambda_1(f)D'_f$.
\end{enumerate}
\end{lemma}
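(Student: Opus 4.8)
The plan is to obtain the first assertion directly from Birkhoff's theorem, and the second by reducing the promotion of a numerical eigenequation to an $\bR$-linear one to the invertibility of a single linear operator on $\Pic^0(X)_\bR$, which is in turn handled by \cref{lem-operator-infinite-dim}. For the first assertion, recall (see the proof of \cref{lem:dd-lower-bound}) that $\lambda_1(f)$ equals the spectral radius $\rho(f^*|_{\N^1(X)_\bR})$, and that $f^*$ preserves the salient closed convex cone $\Nef(X)$ of full dimension in $\N^1(X)_\bR$. Applying \cref{thm:Birkhoff} to the triplet $(\N^1(X)_\bR,\,f^*|_{\N^1(X)_\bR},\,\Nef(X))$ produces a class $D_f\in\Nef(X)$ with $f^*D_f=\lambda_1(f)D_f$ in $\N^1(X)_\bR$; that is, a nef $\bR$-divisor $D_f$ with $f^*D_f\num\lambda_1(f)D_f$, which is the first assertion.

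For the second assertion, fix such a $D_f$ and set $E_0\coloneqq f^*D_f-\lambda_1(f)D_f$, an $\bR$-divisor whose class $[E_0]\in\Pic(X)_\bR$ is numerically trivial. The group of numerically trivial classes in $\Pic(X)_\bR$ is exactly $\Pic^0(X)_\bR$ (the group $\Pic^0(X)$ has finite index in the group of numerically trivial divisor classes, with quotient $\NS(X)_\tors$, which dies under $-\otimes_\bZ\bR$), and $f^*$ acts on this subspace through the pullback endomorphism of the Picard variety. Suppose, for the moment, that $f^*-\lambda_1(f)\id$ is a bijection of $\Pic^0(X)_\bR$. Choose $P\in\Pic^0(X)_\bR$ with $(f^*-\lambda_1(f)\id)P=-[E_0]$, pick a representative $\bR$-divisor $\wtilde{P}$ for $P$, and put $D'_f\coloneqq D_f+\wtilde{P}$. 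Then $D'_f\num D_f$, so $D'_f$ is again nef (nefness being a numerical condition), and $f^*D'_f-\lambda_1(f)D'_f=E_0+(f^*-\lambda_1(f)\id)\wtilde{P}$, whose class in $\Pic(X)_\bR$ is $[E_0]+(f^*-\lambda_1(f)\id)P=0$; hence $f^*D'_f\sim_\bR\lambda_1(f)D'_f$. This gives existence. For uniqueness up to $\bR$-linear equivalence, if $D'_f$ and $D''_f$ both satisfy the conclusion, then the class of $D'_f-D''_f$ lies in $\Pic^0(X)_\bR$ and is annihilated by $f^*-\lambda_1(f)\id$, hence is $0$, i.e.\ $D'_f\sim_\bR D''_f$.

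It remains to prove the bijectivity of $f^*-\lambda_1(f)\id$ on $\Pic^0(X)_\bR$. This is precisely the setting of \cref{lem-operator-infinite-dim}, taken with $R=\bZ\subseteq S=\bR$ (so $K=\bR$), with $M=\Pic^0(X)$ — the group of $\Qbar$-points of the abelian variety $A\coloneqq\pic^0_{X/\Qbar}$, not finitely generated in general — with $\varphi=f^*|_{\Pic^0(X)}$, and with $s=\lambda_1(f)$. By Picard functoriality $f^*$ restricts to an endomorphism of $A$ fixing the origin, hence to an element of the ring $\End(A)$, which is annihilated by its characteristic polynomial $Q(t)\in\bZ[t]$; $Q$ is monic, and (after fixing an embedding $\Qbar\injmap\bC$) its complex roots have the same absolute values as the eigenvalues of $f^*$ on $\operatorname{Lie}(A)=H^1(X,\cO_X)=H^{0,1}(X)\subseteq H^1(X,\bC)$. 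Thus $Q(\varphi)=0$, and by \cref{lem-operator-infinite-dim} it suffices to verify that $Q(\lambda_1(f))\neq 0$, i.e.\ that $\lambda_1(f)$ is not among these absolute values. But every eigenvalue of $f^*$ on $H^{p,q}(X)$ has modulus at most $\sqrt{\lambda_p(f)\lambda_q(f)}$ — a standard estimate from the theory of dynamical degrees relating the action of $f^*$ on $H^1$ to that on $H^2$; see, e.g., \cite{DS05,Din12} — so every eigenvalue of $f^*$ on $H^{0,1}(X)$ has modulus at most $\sqrt{\lambda_0(f)\lambda_1(f)}=\sqrt{\lambda_1(f)}$ (recall $\lambda_0(f)=1$), which is strictly smaller than $\lambda_1(f)$ because $f$ has positive entropy. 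Hence $Q(\lambda_1(f))\neq 0$, and we are done.

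The hard part is this final step: one has to produce a monic integer polynomial annihilating $f^*$ on the non-finitely-generated group $\Pic^0(X)$ and, more importantly, argue that it does not vanish at $\lambda_1(f)$. This is exactly where positive entropy is used, via the comparison between the $f^*$-action on $H^1$ and the $f^*$-action on $H^{1,1}$ (where $\lambda_1(f)$ lives): without the hypothesis $\lambda_1(f)>1$ the operator $f^*-\lambda_1(f)\id$ could well be singular on $\Pic^0(X)_\bR$, and the argument would break down.
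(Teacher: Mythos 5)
Your proposal is correct and follows essentially the same route as the paper: Birkhoff's theorem for part (1), then reduction of part (2) to the bijectivity of $f^*-\lambda_1(f)\id$ on $\Pic^0(X)_\bR$ via \cref{lem-operator-infinite-dim}, applied with the characteristic polynomial of the induced endomorphism of the Picard variety, whose roots are bounded in modulus by $\sqrt{\lambda_1(f)}<\lambda_1(f)$ (the paper cites exactly the same eigenvalue estimate of Dinh, phrased on $H^1(X,\bQ)$ rather than on $\operatorname{Lie}(\pic^0_{X/\Qbar})=H^{0,1}(X)$). The only cosmetic differences are your explicit remark that numerically trivial classes in $\Pic(X)_\bR$ coincide with $\Pic^0(X)_\bR$ after tensoring with $\bR$, and your uniqueness argument via injectivity, both of which match the paper's intent.
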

\begin{proof}
It is well-known that the assertion~\ref{lemma:linear-equiv-1} follows from Birkhoff's \cref{thm:Birkhoff}.
Fix such a nef \(\bR\)-divisor $D_f\in \Nef(X)$ such that $f^*D_f \num \lambda_1(f)D_f$.
Let $\Pic^0(X)$ denote the subgroup of the Picard group $\Pic(X)$ consisting of integral divisors on $X$ algebraically equivalent to zero (modulo linear equivalence), which has the structure of an abelian variety.
Consider the exact sequence of $\bR$-vector spaces:
\[
0\to \Pic^0(X)_\bR \to \Pic(X)_\bR \to \NS(X)_\bR \cong \N^1(X)_\bR \to 0.
\]
If the irregularity $q(X)\coloneqq h^1(X, \sO_X) = 0$, then $\N^1(X)_\bR \cong \Pic(X)_\bR$ and hence the assertion~\ref{lemma:linear-equiv-2} follows.
So let us consider the case $q(X)>0$.
Note that $f^*D_f - \lambda_1(f)D_f \in \Pic^0(X)_\bR$.
\begin{claim}
\label{claim:surjective}
The $\bR$-linear map
\[
f^* \otimes_\bZ 1_\bR - \lambda_1(f) \id \colon \Pic^0(X)_\bR \to \Pic^0(X)_\bR
\]
on the (possibly infinite-dimensional) $\bR$-vector space $\Pic^0(X)_\bR$ is bijective.
\end{claim}

Assuming \cref{claim:surjective} for the time being, up to \(\bR\)-linear equivalence, there is a unique $\bR$-divisor $E\in \Pic^0(X)_\bR$ such that $f^*E-\lambda_1(f)E \sim_\bR f^*D_f - \lambda_1(f)D_f$, which yields that $f^*(D_f-E) \sim_\bR \lambda_1(f)(D_f-E)$.
Hence $D'_f \coloneqq D_f-E$ suffices to conclude the assertion~\ref{lemma:linear-equiv-2}.
\end{proof}

\begin{proof}[Proof of \cref{claim:surjective}]
Let $\pic^0_{X/\Qbar}$ denote the Picard variety of $X$.
The pullback $f^*$ of divisors on $X$ induces an isogeny $g$ of $\pic^0_{X/\Qbar}$.
Denote by $P_g(t) \in \bZ[t]$ the characteristic polynomial of $g$, which has degree $2q(X)$ and satisfies that
\begin{align*}
P_g(t) & = \det(t \id - g^* \, \rotatebox[origin=c]{-90}{$\circlearrowright$} \ H^1(\pic^0_{X/\Qbar}, \bQ)) \\
& = \det(t \id - f_* \, \rotatebox[origin=c]{-90}{$\circlearrowright$} \ H^{2n-1}(X, \bQ)) \\
& = \det(t \id - f^* \, \rotatebox[origin=c]{-90}{$\circlearrowright$} \ H^{1}(X, \bQ)),
\end{align*}
where $H^{2n-1}(X, \bQ))$ is canonically isomorphic to $H^1(\pic^0_{X/\Qbar}, \bQ))$ via the Poincar\'e divisor on $X\times \pic^0_{X/\Qbar}$ and the last equality follows from Poincar\'e duality.
Thanks to \cite[Proposition~5.8]{Dinh05}, all roots of $P_g(t)$ have moduli at most $\sqrt{\lambda_1(f)}$.
In particular, $P_g(\lambda_1(f)) \neq 0$ since $\lambda_1(f)>1$ by assumption.
Besides, since the rational representation $\End(\pic^0_{X/\Qbar}) \to \End_\bQ(H_1(\pic^0_{X/\Qbar}, \bQ))$ is an injective homomorphism, $P_g(g) = 0$ as an endomorphism of $\pic^0_{X/\Qbar}$.
It follows that $P_g(f^*) = 0$ as a group homomorphism of $\Pic^0(X) = \pic^0_{X/\Qbar}(\Qbar)$.
Now by applying \cref{lem-operator-infinite-dim} to the $\bZ$-module $\Pic^0(X)$, \cref{claim:surjective} follows.
\end{proof}

Now we are ready to prove \cref{thm:nef-big}.

\begin{proof}[Proof of \cref{thm:nef-big}]
Assertions~\ref{thm:nef-big-1} and \ref{thm:nef-big-2} follow readily from \cref{prop:characters-of-G}, \cref{lemma:character-non-vanishing}, and the assumption that $\rank G = n-1$.
Let $1\leq i \leq n$ be fixed.
Let $p_i$ denote the natural projection from $\bR^n$ to $\bR^{n-1}$ by omitting the $i$-th coordinate $x_i$.
Recall that the group homomorphism $\pi\colon G \to (\bR^n, +)$, defined by sending $g\in G$ to $(\log\chi_1(g),\dots,\log\chi_n(g))$, is injective and $\pi(G)$ is a lattice in $\bR^n$ of rank $n-1$ (see \cref{prop:inj+discrete}).
Besides, it follows from the projection formula that for any $g\in G$, $\chi_1(g) \cdots \chi_n(g)=1$.
Hence the image $\pi(G)$ of $G$ is actually contained in the hyperplane 
$H \coloneqq \{(x_1,\dots,x_n) \in \bR^n : \sum_{j=1}^n x_j=0\}$.

Consider the following commutative diagram
\begin{equation*}
\begin{tikzcd}
G \cong \bZ^{n-1} \arrow[r, hook, "\pi"] \arrow[d, hook, "\tau"'] & \bR^n \arrow[d, two heads, "p_i"] \\
H \arrow[r, "p_i \circ \iota"] \arrow[ru, hook, "\iota"] & \bR^{n-1}.
\end{tikzcd}
\end{equation*}
Clearly, $p_i \circ \iota\colon H \to \bR^{n-1}$ is an isomorphism of $\bR$-vector spaces.
By the open mapping theorem, it is also an isomorphism of topological vector spaces.
Denote $p_i \circ \pi$ by $\pi_i$. 
Since $\tau(G)$ is a lattice in $H$ of rank $n-1$ and \(p_i\circ\iota\) is a topological isomorphism, $\pi_i(G) = (p_i \circ \iota)(\tau(G))$ is a lattice in $\bR^{n-1}$ of rank $n-1$.
Therefore, there is some $\id \neq g_i \in G$ such that all $n-1$ coordinates of $\pi_i(g_i)$ are negative, i.e., $\chi_j(g_i)<1$ for all $j\neq i$.
Further, by \cref{prop:characters-of-G}, for such $g_i$, there is some $1\leq t_i \leq n$ such that $\chi_{t_i}(g_i) = \lambda_1(g_i)>1$.
Clearly, $t_i$ has to be $i$.
We thus proved that $g_i^*D_i \num \lambda_1(g_i)D_i$.
By \cref{lemma:linear-equiv}\ref{lemma:linear-equiv-2}, there is a nef $\bR$-divisor $D'_i \num D_i$ such that $g_i^*D'_i \sim_\bR \lambda_1(g_i)D'_i$.

In the end, we replace $D_i$ with $D'_i$ for all $i$.
Since $D'_i \num D_i$, this does not affect Assertions~\ref{thm:nef-big-1} and \ref{thm:nef-big-2}.
We thus complete the proof of \cref{thm:nef-big}.
\end{proof}

\begin{remark}
\label{rmk:generators}
In the above \cref{thm:nef-big}, denote by $M$ the matrix $(\log\chi_i(g_j))_{1\leq i,j\leq n}$.
Clearly, the rank of $M$ is at most $n-1$ since $\sum_{i=1}^n \log\chi_i = 0$.
On the other hand, as $\log\chi_i(g_j)<1$ for all $i\neq j$, one can see that any submatrix $M_i$ of $M$ obtained by deleting the $i$-th row and the $i$-th column is strictly diagonally dominant and hence nonsingular (see \cite[Theorem~6.1.10]{HJ13}).
It follows that $\rank M=n-1$.
Hence $g_1,\dots,g_n$ generate a free abelian subgroup of $G$ of full rank $n-1$;
moreover, by the argument, so do any $n-1$ automorphisms from $g_1,\dots,g_n$.
\end{remark}

We end this section with the following lemma, which will be used later in the proof of \cref{thm:A} (or rather, \cref{thm:A-full}, and its corollaries).

\begin{lemma}
\label{lemma:aug-base-loci-G-inv}
With notation as in \cref{thm:nef-big}, the augmented base locus $\bB_+(D)$ of the nef and big $\bR$-divisor $D$ is a $G$-invariant Zariski closed proper subset of $X$.
In particular, for any $x\in (X\backslash \bB_+(D))(\Qbar)$, one has $\cO_g(x)\cap \bB_+(D) = \emptyset$ for any $g\in G$.
\end{lemma}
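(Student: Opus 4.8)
The plan is to show that $\bB_+(D)$ is $G$-invariant by exploiting the eigendivisor structure from \cref{thm:nef-big}, and then invoke \cref{prop:B_+}\ref{prop:B_+-big-iff-nx} for properness. The key observation is that each $g \in G$ acts on $\N^1(X)_\bR$ by pulling back, and since $D = \sum_{i=1}^n D_i$ with $g^*D_i \num \chi_i(g) D_i$, we get $g^*D \num \sum_{i=1}^n \chi_i(g) D_i$. The right-hand side is a positive combination (all $\chi_i(g) > 0$) of the nef divisors $D_i$, so by \cref{lemma:LS-2.16} (the Lesieutre--Satriano coefficient-independence),
\[
\bB_+(g^*D) = \bB_+\Bigl(\sum_{i=1}^n \chi_i(g) D_i\Bigr) = \bB_+\Bigl(\sum_{i=1}^n D_i\Bigr) = \bB_+(D),
\]
where the first equality uses that augmented base loci depend only on the numerical class (\cref{prop:B_+}\ref{prop:B_+-num}).

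Next I would relate $\bB_+(g^*D)$ to $g^{-1}(\bB_+(D))$. Since $g$ is an automorphism, pulling back a divisor by $g$ corresponds geometrically to taking the preimage under $g$; concretely, for any $\bQ$-divisor $E$ one has $\Bs(g^*E) = g^{-1}(\Bs(E))$, hence $\bB(g^*E) = g^{-1}(\bB(E))$, and since $g^*$ also sends ample $\bR$-divisors to ample $\bR$-divisors bijectively, taking the intersection over all such $A$ in \cref{defn-augmented-bl} gives $\bB_+(g^*D) = g^{-1}(\bB_+(D))$. Combining this with the previous paragraph yields $g^{-1}(\bB_+(D)) = \bB_+(D)$ for every $g \in G$; as $G$ is a group, $\bB_+(D)$ is $G$-invariant. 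Properness is immediate: $D$ is big by \cref{thm:nef-big}\ref{thm:nef-big-2}, so $\bB_+(D) \neq X$ by \cref{prop:B_+}\ref{prop:B_+-big-iff-nx}.

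For the final assertion, take $x \in (X \setminus \bB_+(D))(\Qbar)$. For any $g \in G$ and any $m \in \bZ_{\geq 0}$, the point $g^m(x)$ lies in $\bB_+(D)$ if and only if $x \in g^{-m}(\bB_+(D)) = \bB_+(D)$ (using $G$-invariance and $g^m \in G$), which is false by hypothesis. Hence $\cO_g(x) \cap \bB_+(D) = \emptyset$.

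The only mildly delicate point is the identity $\bB_+(g^*D) = g^{-1}(\bB_+(D))$ for an $\bR$-divisor: one must check that pullback by the automorphism $g$ commutes with the defining intersection over ample perturbations, which reduces to the clean facts that $g^*$ preserves ampleness, preserves the property of being a $\bQ$-divisor, and satisfies $\bB(g^*E) = g^{-1}(\bB(E))$ for $\bQ$-divisors $E$ (the last being a formal consequence of $|mg^*E| = g^*|mE|$). None of this is hard, but it is the step where a careless argument could go wrong; everything else is a direct citation of \cref{prop:B_+}, \cref{lemma:LS-2.16}, and \cref{thm:nef-big}.
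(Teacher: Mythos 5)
Your proposal is correct and follows essentially the same route as the paper: both arguments combine the numerical invariance of $\bB_+$ (\cref{prop:B_+}\ref{prop:B_+-num}), the Lesieutre--Satriano coefficient-independence (\cref{lemma:LS-2.16}), and the compatibility of augmented base loci with pullback by an automorphism, the only cosmetic difference being that the paper writes $g(\bB_+(D))=\bB_+((g^{-1})^*D)$ while you work with $\bB_+(g^*D)=g^{-1}(\bB_+(D))$. Your extra care in justifying that last identity for $\bR$-divisors is a point the paper passes over silently, and is welcome.
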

\begin{proof}
It is well-known that $\bB_+(D) \neq X$ if and only if $D$ is big; see \cref{prop:B_+}\ref{prop:B_+-big-iff-nx}.
Thanks to an observation by Lesieutre and Satriano (see \cref{lemma:LS-2.16}), for any positive numbers \(a_1,\dots,a_n\), we have
\[
\bB_+(a_1D_1+\dots+a_nD_n)=\bB_+(D_1+\dots+D_n).
\]
It follows from \cref{prop:B_+}\ref{prop:B_+-num} that for any \(g\in G\),
\[
g(\bB_+(D)) = \bB_+((g^{-1})^*(D_1+\cdots+D_n)) = \bB_+\bigg(\sum_{i=1}^n\chi_i(g)^{-1}D_i\bigg) = \bB_+(D).
\]
Therefore, \(\bB_+(D)\) is \(g\)-invariant.
\end{proof}

\section{Canonical heights for abelian group actions}
\label{section:can-ht}

Throughout this section, $X$ is a smooth projective variety of dimension $n\geq 2$ defined over $\Qbar$ and $G \cong \bZ^{n-1}$ is a free abelian group of automorphisms of $X$ of positive entropy.
For brevity, we also assume the following hypothesis; thanks to \cref{thm:nef-big}, it always holds true.

\begin{hypothesis}
\label{key-hyp}
There are $n$ distinct group characters $\chi_1,\dots,\chi_n$ of $G$ associated with some common nef eigendivisors $D_1,\dots,D_n$ on $X$ and $n$ automorphisms $g_1,\dots,g_{n} \in G$ such that
\begin{enumerate}
\item for any $g\in G$ and any $1\leq i\leq n$,
\[
g^*D_i \num \chi_i(g)D_i;
\]
\item $D\coloneqq D_1+\cdots+D_n$ is a nef and big $\bR$-divisor on $X$;
\item for any $1\leq i\leq n$,
\[
g_i^*D_i \sim_\bR \lambda_1(g_i)D_i.
\]
\end{enumerate}
\end{hypothesis}

We shall construct a canonical height function $\widehat{h}_G$ associated with $G$.

\begin{lemma}
\label{lemma:nef-can-ht}
Under \cref{key-hyp}, for any $1\leq i\leq n$ and any $x\in X(\Qbar)$, the limit
\[
\widehat{h}_{D_i,g_i}(x) \coloneqq \lim_{m\to \infty} \frac{h_{D_i}(g_i^m(x))}{\lambda_1(g_i)^m}
\]
exists and satisfies the following properties.
\begin{enumerate}[label=\emph{(\arabic*)}, ref=(\arabic*)]
\item \label{lemma:nef-can-ht-1} $\widehat{h}_{D_i,g_i} = h_{D_i} + O(1)$.

\item \label{lemma:nef-can-ht-2} $\widehat{h}_{D_i,g_i}\circ g = \chi_i(g) \, \widehat{h}_{D_i,g_i}$ for any $g\in G$; in particular, $\widehat{h}_{D_i,g_i}\circ g_i = \lambda_1(g_i) \, \widehat{h}_{D_i,g_i}$.
\end{enumerate}
\end{lemma}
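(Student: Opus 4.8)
The plan is to treat this as an instance of the Call--Silverman canonical height construction (\cref{thm:nef-can-ht}), which applies verbatim to the pair $(g_i, D_i)$ because \cref{key-hyp}(3) gives exactly the hypothesis $g_i^*D_i \sim_\bR \lambda_1(g_i) D_i$ with $\lambda_1(g_i) > 1$ (positive entropy). So the existence of the limit and properties~\ref{lemma:nef-can-ht-1} and $\widehat{h}_{D_i,g_i}\circ g_i = \lambda_1(g_i)\widehat{h}_{D_i,g_i}$ are immediate from that theorem. The only genuinely new content is the stronger equivariance in~\ref{lemma:nef-can-ht-2}: namely that $\widehat{h}_{D_i,g_i}$ transforms by the \emph{character} $\chi_i$ under the action of \emph{every} $g \in G$, not just under $g_i$.

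\medskip

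For that, here is the key point. Fix $g \in G$. By \cref{key-hyp}(1) we have $g^*D_i \num \chi_i(g) D_i$, hence $h_{D_i}\circ g = \chi_i(g)\, h_{D_i} + O(\sqrt{h_H^+})$ is \emph{not} quite what I want (the error from algebraic equivalence, \cref{thm:weil-height}\ref{thm:weil-height-alg-equiv}, is only $O(\sqrt{h_H^+})$). Instead I would argue directly with the limit. Since $g$ and $g_i$ commute, $g_i^m \circ g = g \circ g_i^m$, so
\[
\widehat{h}_{D_i,g_i}(g(x)) = \lim_{m\to\infty} \frac{h_{D_i}(g_i^m(g(x)))}{\lambda_1(g_i)^m} = \lim_{m\to\infty}\frac{h_{D_i}(g(g_i^m(x)))}{\lambda_1(g_i)^m} = \lim_{m\to\infty}\frac{h_{g^*D_i}(g_i^m(x)) + O(1)}{\lambda_1(g_i)^m},
\]
using functoriality \cref{thm:weil-height}\ref{thm:weil-height-functoriality}. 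Now $g^*D_i \num \chi_i(g)D_i$, so $g^*D_i - \chi_i(g)D_i \in \Pic^0(X)_\bR$ and by \cref{thm:weil-height}\ref{thm:weil-height-alg-equiv}, $h_{g^*D_i} = \chi_i(g)\, h_{D_i} + O(\sqrt{h_H^+})$. The issue is controlling $h_H^+(g_i^m(x))/\lambda_1(g_i)^m \to 0$, i.e.\ that the ample height of $g_i^m(x)$ grows strictly slower than $\lambda_1(g_i)^{2m}$; in fact a crude bound $h_H(g_i^m(x)) = O(\lambda_1(g_i)^m \cdot m^?)$ or even just $o(\lambda_1(g_i)^{2m})$ suffices, and this follows from a standard height-growth estimate (of Matsuzawa/Kawaguchi--Silverman type, as invoked in the remark after \cref{thm:nef-can-ht}) together with $\lambda_1(g_i) > 1$. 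Then $\sqrt{h_H^+(g_i^m(x))}/\lambda_1(g_i)^m \to 0$, so the $O(\sqrt{h_H^+})$ term disappears in the limit and
\[
\widehat{h}_{D_i,g_i}(g(x)) = \chi_i(g)\lim_{m\to\infty}\frac{h_{D_i}(g_i^m(x))}{\lambda_1(g_i)^m} = \chi_i(g)\,\widehat{h}_{D_i,g_i}(x).
\]
The special case $g = g_i$ recovers $\chi_i(g_i) = \lambda_1(g_i)$ from \cref{key-hyp} (this is Assertion~\ref{thm:nef-big-3} of \cref{thm:nef-big}).

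\medskip

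So the skeleton is: (i) quote \cref{thm:nef-can-ht} applied to $(g_i,D_i)$ for existence of the limit, property~\ref{lemma:nef-can-ht-1}, and the $g_i$-equivariance; (ii) for a general $g\in G$, use commutativity $g_i^m g = g g_i^m$, functoriality, and $g^*D_i \num \chi_i(g)D_i$ to reduce $\widehat{h}_{D_i,g_i}(g(x))$ to $\chi_i(g)$ times the defining limit plus a $\lambda_1(g_i)^{-m}\sqrt{h_H^+(g_i^m(x))}$ error; (iii) kill that error using a sub-exponential (indeed $o(\lambda_1(g_i)^{2m})$) bound on $h_H(g_i^m(x))$. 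I expect step~(iii)---locating and correctly citing the height-growth estimate that makes the $O(\sqrt{h_H^+})$ term negligible along the $g_i$-orbit---to be the main obstacle; everything else is a routine manipulation of the limit defining the canonical height. An alternative, perhaps cleaner, route to~\ref{lemma:nef-can-ht-2} that sidesteps (iii): combine property~\ref{lemma:nef-can-ht-1} ($\widehat h_{D_i,g_i} = h_{D_i}+O(1)$) with the fact that $\widehat h_{D_i,g_i}\circ g - \chi_i(g)\widehat h_{D_i,g_i}$ is simultaneously $O(\sqrt{h_H^+})$ (from the displayed manipulation above, now only needing the weaker statement) and invariant---or homogeneous of the right weight---under $g_i$, forcing it to vanish; but this still ultimately needs a growth estimate, so I would present the direct limit argument.
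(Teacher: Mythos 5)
Your proposal is correct and follows essentially the same route as the paper: existence, property (1), and the $g_i$-equivariance come from the Call--Silverman theorem applied to $g_i^*D_i \sim_\bR \lambda_1(g_i)D_i$, while the $\chi_i(g)$-equivariance for general $g\in G$ is obtained exactly as you describe, via commutativity, functoriality, the algebraic-equivalence bound $O(\sqrt{h_{H_X}^+})$, and Matsuzawa's growth estimate $h_{H_X}^+(g_i^m(x)) \leq C\, m^{\rho(X)-1}\lambda_1(g_i)^m h_{H_X}^+(x)$, which makes the error term $\sqrt{h_{H_X}^+(g_i^m(x))}/\lambda_1(g_i)^m$ tend to zero. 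The step you flagged as the main obstacle is handled in the paper by citing precisely that estimate, so your skeleton matches the actual proof.
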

\begin{proof}
Note that $g_i^*D_i \sim_\bR \lambda_1(g_i)D_i$ by \cref{thm:nef-big}\ref{thm:nef-big-3}.
Hence the existence of each \(\widehat{h}_{D_i,g_i}\) and the property~\ref{lemma:nef-can-ht-1} follow immediately from \cref{thm:nef-can-ht}.

For the property~\ref{lemma:nef-can-ht-2}, fix an integer \(i\) with $1\leq i\leq n$, an automorphism $g\in G$ of positive entropy, an ample divisor $H_X$ on $X$, and a height function $h_{H_X}$ associated with $H_X$.
Then thanks to Matsuzawa \cite[Theorem~1.7(2)]{Matsuzawa20}, there is a constant $C_1>0$ such that for any rational point $x\in X(\Qbar)$ and any $m\geq 1$,
\begin{align}
\label{eq:Matsuzawa}
h_{H_X}^+(g_i^m(x)) \leq C_1 m^{\rho(X)-1}\lambda_1(g_i)^m h_{H_X}^+(x).
\end{align}
Observe that by \cref{thm:nef-big}\ref{thm:nef-big-1}, we have \(g^*D_i \num \chi_i(g)D_i\).
Then according to \cref{thm:weil-height}\ref{thm:weil-height-functoriality} (Functoriality) and \ref{thm:weil-height-alg-equiv} (Algebraic equivalence), there is a constant $C_2>0$ such that
\begin{align*}
|h_{g^*D_i} - h_{D_i}\circ g| & \leq C_2, \\
|h_{g^*D_i} - \chi_i(g) \, h_{D_i}| & \leq C_2 \sqrt{h_{H_X}^+}.
\end{align*}
Clearly, combining them together yields that
\[
|h_{D_i}\circ g - \chi_i(g) \, h_{D_i}| \leq 2C_2 \sqrt{h_{H_X}^+}.
\]
In particular, for any $m\geq 1$, one has
\begin{equation}
\label{eq:difference}
|h_{D_i}(g(g_i^m(x))) - \chi_i(g) \, h_{D_i}(g_i^m(x))| \leq 2C_2 \sqrt{h_{H_X}^+(g_i^m(x))}. 
\end{equation}
As $G$ is abelian, it follows from \cref{eq:Matsuzawa,eq:difference} that
\begin{align*}
\left|\frac{h_{D_i}(g_i^m(g(x)))}{\lambda_1(g_i)^m}-\frac{\chi_i(g)h_{D_i}(g_i^m(x))}{\lambda_1(g_i)^m}\right| & \leq \frac{2C_2 \sqrt{h_{H_X}^+(g_i^m(x))}}{\lambda_1(g_i)^m} \\
& \leq C_3 \sqrt{\frac{m^{\rho(X)-1}}{\lambda_1(g_i)^{m}}},
\end{align*}
where $C_3>0$ is a constant independent of $m\geq 1$.
Note that both limits of the left-hand side exist (see \cref{thm:nef-can-ht}).
So taking $m\to \infty$ yields that $\widehat{h}_{D_i,g_i}(g(x)) = \chi_i(g) \, \widehat{h}_{D_i,g_i}(x)$.
\end{proof}

The theorem below is a precise version of our main result \cref{thm:A}.
Indeed, one just takes the Zariski closed proper subset $Z$ to be the augmented base locus $\bB_+(D)$ of the nef and big $\bR$-divisor $D$ constructed in \cref{thm:nef-big}.
It extends \cite[Theorem~1.1]{Silverman91} and \cite[Theorem~5.2]{Kawaguchi08} to higher dimensions (under the maximal dynamical rank assumption).

\begin{theorem}
\label{thm:A-full}
Under \cref{key-hyp}, for each $1\leq i\leq n$, let $\widehat{h}_{D_i,g_i}$ be in \cref{lemma:nef-can-ht}.
Define a function $\widehat{h}_G \colon X(\Qbar)\to \bR$ by
\[
\widehat{h}_G \coloneqq \sum_{i=1}^n \widehat{h}_{D_i,g_i}.
\]
Then the following assertions hold.
\begin{enumerate}[label=\emph{(\arabic*)}, ref=(\arabic*)]
\item \label{thm:A-full-def} The function \(\widehat{h}_{G}\) is a Weil height corresponding to the nef and big $\bR$-divisor \(D\), i.e.,
\[
\widehat{h}_{G} = h_{D} + O(1).
\]

\item \label{thm:A-full-functorial} For any $g\in G$, one has
\[
\widehat{h}_{G} \circ g = \sum_{i=1}^n \chi_i(g) \, \widehat{h}_{D_i,g_i}.
\]

\item \label{thm:A-full-positivity} For any $1\leq i\leq n$ and any \(x\in (X\backslash\bB_+(D))(\Qbar)\), one has \(\widehat{h}_{D_i,g_i}(x)\geq 0\) and hence \(\widehat{h}_{G}(x)\geq 0\), where $\bB_+(D)$ is the augmented base locus of $D$ (see \cref{lemma:aug-base-loci-G-inv}).

\item \label{thm:A-full-Northcott} 
The Weil height function \(\widehat{h}_{G}\) satisfies the Northcott finiteness property on \(X\backslash\bB_+(D)\); in other words, for any positive integer \(d\) and real number \(N\), the set
\[
\big\{x \in (X\backslash\bB_+(D))(\Qbar) : [\bQ(x):\bQ]\leq d,~\widehat{h}_{G}(x)\leq N \big\}
\]
is finite.

\item \label{thm:A-full-zero-loci} For any \(x\in (X\backslash\bB_+(D))(\Qbar)\), the following statements are equivalent.
\begin{enumerate}[label=\emph{(\roman*)}, ref=(\roman*)]
\item \label{thm:A-full-zero-loci-3} \(\widehat{h}_{G}(x)=0\).

\item \label{thm:A-full-zero-loci-2} \(\widehat{h}_{D_i,g_i}(x)=0\) for all \(1\leq i\leq n\).

\item \label{thm:A-full-zero-loci-1} \(x\) is \(g\)-periodic for any \(g\in G\).

\item \label{thm:A-full-zero-loci-5} \(x\) is \(g\)-periodic for some $\id \neq g\in G$.

\item \label{thm:A-full-zero-loci-4} \(\widehat{h}_{D_i,g_i}(x)=0\) for some \(1\leq i\leq n\).
\end{enumerate}
\end{enumerate}
We call $\widehat{h}_G$ \emph{a canonical height function} associated with the abelian group $G$ of maximal dynamical rank.
\end{theorem}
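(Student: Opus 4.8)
The plan is to establish the five assertions in the stated order. Assertions~\ref{thm:A-full-def} and \ref{thm:A-full-functorial} are formal: summing $\widehat h_{D_i,g_i}=h_{D_i}+O(1)$ over $i$ and using additivity of Weil heights (\cref{thm:weil-height}\ref{thm:weil-height-additivity}) gives $\widehat h_G=h_D+O(1)$ with $D$ nef and big by \cref{key-hyp}, while \ref{thm:A-full-functorial} is the sum over $i$ of \cref{lemma:nef-can-ht}\ref{lemma:nef-can-ht-2}. Iterating (using $\widehat h_{D_i,g_i}\circ g^m=\chi_i(g)^m\,\widehat h_{D_i,g_i}$, which follows from \cref{lemma:nef-can-ht}\ref{lemma:nef-can-ht-2} since each $\chi_i$ is a character) yields the master identity $\widehat h_G(g^m(x))=\sum_{i=1}^n\chi_i(g)^m\,\widehat h_{D_i,g_i}(x)$ for all $g\in G$ and $m\ge 0$, which I will use repeatedly; and given \ref{thm:A-full-def} and bigness of $D$, assertion~\ref{thm:A-full-Northcott} is immediate from \cref{thm:Northcott}.

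The crux is the positivity \ref{thm:A-full-positivity}. First I would record that for a big $\bR$-divisor $D$ the Weil height $h_D$ is bounded below on $(X\setminus\bB_+(D))(\Qbar)$: by \cref{prop:B_+}\ref{prop:B_+-modification} write $\bB_+(D)=\Bs(M(D-A))$ for a small ample $\bQ$-divisor $A$ and some $M\ge1$; a basis of $H^0(X,M(D-A))$ yields finitely many effective divisors $E_0,\dots,E_r$ with $\bigcap_k\Supp(E_k)=\bB_+(D)$ and $D\sim_\bQ A+\tfrac1M E_k$, so the positivity property \cref{thm:weil-height}\ref{thm:weil-height-positivity} applied to each $E_k$ (together with boundedness below of $h_A$) gives a single constant with $h_D\ge -C$ off $\bB_+(D)$, hence $\widehat h_G\ge -C'$ there. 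Now fix $i$ and $x\in(X\setminus\bB_+(D))(\Qbar)$; by \cref{lemma:aug-base-loci-G-inv} the whole orbit $\{g_i^m(x)\}_m$ avoids $\bB_+(D)$, so the master identity gives $-C'\le\widehat h_G(g_i^m(x))=\lambda_1(g_i)^m\widehat h_{D_i,g_i}(x)+\sum_{j\neq i}\chi_j(g_i)^m\widehat h_{D_j,g_j}(x)$. Since $0<\chi_j(g_i)<1$ for $j\neq i$ (\cref{thm:nef-big}\ref{thm:nef-big-3}), the last sum is bounded independently of $m$; dividing by $\lambda_1(g_i)^m>1$ and letting $m\to\infty$ forces $\widehat h_{D_i,g_i}(x)\ge0$, whence $\widehat h_G(x)\ge0$.

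For \ref{thm:A-full-zero-loci}, positivity immediately gives \ref{thm:A-full-zero-loci-3}$\Leftrightarrow$\ref{thm:A-full-zero-loci-2}. For \ref{thm:A-full-zero-loci-2}$\Rightarrow$\ref{thm:A-full-zero-loci-1}: if every $\widehat h_{D_i,g_i}(x)=0$, the master identity gives $\widehat h_G(g^m(x))=0$ for all $g\in G$ and $m\ge0$; each forward orbit then lies in $X\setminus\bB_+(D)$ (\cref{lemma:aug-base-loci-G-inv}) and has bounded height and bounded degree (as $G$ is defined over a number field), hence is finite by \ref{thm:A-full-Northcott}, so $x$ is $g$-periodic. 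The step \ref{thm:A-full-zero-loci-1}$\Rightarrow$\ref{thm:A-full-zero-loci-5} is trivial. For \ref{thm:A-full-zero-loci-5}$\Rightarrow$\ref{thm:A-full-zero-loci-4}: if $g^N(x)=x$ with $\id\neq g\in G$, then $\widehat h_{D_i,g_i}(x)=\chi_i(g)^N\widehat h_{D_i,g_i}(x)$ for each $i$, and since $g$ has positive entropy \cref{prop:characters-of-G} gives some $i$ with $\chi_i(g)=\lambda_1(g)>1$, forcing $\widehat h_{D_i,g_i}(x)=0$. For \ref{thm:A-full-zero-loci-4}$\Rightarrow$\ref{thm:A-full-zero-loci-2}: if $\widehat h_{D_{i_0},g_{i_0}}(x)=0$, then for all $m\ge0$, $0\le\widehat h_G(g_{i_0}^m(x))=\sum_{j\neq i_0}\chi_j(g_{i_0})^m\widehat h_{D_j,g_j}(x)\le\widehat h_G(x)$ because $0<\chi_j(g_{i_0})<1$ and the terms $\widehat h_{D_j,g_j}(x)$ are nonnegative, so the $g_{i_0}$-orbit of $x$ has bounded height and bounded degree, hence is finite by \ref{thm:A-full-Northcott}, so $x$ is $g_{i_0}$-periodic of some period $N$, and then $\widehat h_{D_j,g_j}(x)=\chi_j(g_{i_0})^N\widehat h_{D_j,g_j}(x)$ with $\chi_j(g_{i_0})^N\neq1$ for $j\neq i_0$ forces $\widehat h_{D_j,g_j}(x)=0$ for every $j$. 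This closes the cycle.

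The main obstacle is \ref{thm:A-full-positivity}: the eigendivisors $D_i$ need not be big (they are typically null for the intersection form), so $h_{D_i}$ is unbounded below and one cannot argue one component at a time; the point is to work with the big divisor $D$, use the $G$-invariance of $\bB_+(D)$ to keep whole $g_i$-orbits inside the region where $h_D$ is bounded below, and exploit the strict contraction $\chi_j(g_i)<1$ ($j\neq i$) to isolate the unique expanding summand $\widehat h_{D_i,g_i}$. With positivity secured, everything else is bookkeeping with the master identity and the Northcott property.
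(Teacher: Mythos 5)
Your proposal is correct and follows essentially the same route as the paper: the same master identity $\widehat{h}_G(g^m(x))=\sum_i\chi_i(g)^m\,\widehat{h}_{D_i,g_i}(x)$, the same positivity argument (orbit confined to $X\setminus\bB_+(D)$ by \cref{lemma:aug-base-loci-G-inv}, lower bound for $h_D$ there, contraction of the characters $\chi_j(g_i)<1$ for $j\neq i$, divide by $\lambda_1(g_i)^m$), Northcott from bigness of $D$, and the same cycle of implications in (5). The only cosmetic differences are that you spell out the lower bound $h_D\geq O(1)$ off $\bB_+(D)$ (the paper cites \cite[Lemma~2.26]{LS21}) and that in \ref{thm:A-full-zero-loci-5}$\Rightarrow$\ref{thm:A-full-zero-loci-4} and the end of \ref{thm:A-full-zero-loci-4}$\Rightarrow$\ref{thm:A-full-zero-loci-2} you use the fixed-point relation $\widehat{h}_{D_j,g_j}(x)=\chi_j(g)^N\widehat{h}_{D_j,g_j}(x)$ with $\chi_j(g)^N\neq1$ in place of the paper's limiting arguments, which is a valid (and slightly cleaner) finish.
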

\begin{proof}
Assertion~\ref{thm:A-full-def} follows immediately from \cref{lemma:nef-can-ht}\ref{lemma:nef-can-ht-1} and \cref{thm:weil-height}\ref{thm:weil-height-additivity} (Additivity).
Assertion~\ref{thm:A-full-functorial} follows from \cref{lemma:nef-can-ht}\ref{lemma:nef-can-ht-2}.
Since $D$ is a big $\bR$-divisor, the height function $h_D$ satisfies the Northcott finiteness property (see \cref{thm:Northcott}), so does $\widehat{h}_G$ by Assertion~\ref{thm:A-full-def}.
We have thus proved Assertion~\ref{thm:A-full-Northcott}.

Next, we shall show Assertion~\ref{thm:A-full-positivity}.
Fix an index $i$ with \(1\leq i\leq n\).
According to \cref{lemma:aug-base-loci-G-inv}, the augmented base locus $\bB_+(D)$ of $D$ is a $G$-invariant Zariski closed proper subset of $X$.
Fix a point $x\in (X\backslash\bB_+(D))(\Qbar)$; in particular, $\cO_{g_i}(x)\cap \bB_+(D) = \emptyset$.
We notice by \cite[Lemma~2.26]{LS21} that $h_D \geq O(1)$ outside $\bB_+(D)$. 
On the other hand, by Assertion~\ref{thm:A-full-def}, we have
\[
h_D = \widehat{h}_G + O(1) = \sum_{j=1}^n \widehat{h}_{D_j,g_j} + O(1).
\]
It follows that for any $m\geq 1$,
\begin{align}\label{eq-thm:A-full-bounded}
O(1) \leq \sum_{j=1}^n \widehat{h}_{D_j,g_j}(g_i^m(x)) = \sum_{j=1}^n \chi_j(g_i)^m \, \widehat{h}_{D_j,g_j}(x),
\end{align}
where the equality is from Assertion~\ref{thm:A-full-functorial}.
Note that $\chi_j(g_i) < 1$ for all $j\neq i$ and $\chi_i(g_i) = \lambda_1(g_i)>1$ by the choice of \(g_i\); see \cref{thm:nef-big}\ref{thm:nef-big-3}.
Dividing \cref{eq-thm:A-full-bounded} by \(\chi_i(g_i)^m\) from both sides and letting \(m\) tend to infinity, it is easy to see that $\widehat{h}_{D_i,g_i}(x)$ has to be nonnegative.
This thus shows Assertion~\ref{thm:A-full-positivity}.

At last, we prove Assertion~\ref{thm:A-full-zero-loci}.
Fix a rational point $x\in (X\backslash\bB_+(D))(\Qbar)$; in particular, $\cO_{g}(x)\cap \bB_+(D) = \emptyset$ for any $g\in G$.
We shall prove the equivalence in the following order:
\[
\begin{tikzcd}
\textup{\ref{thm:A-full-zero-loci-3}} \arrow[r, Leftrightarrow] & \textup{\ref{thm:A-full-zero-loci-2}} \arrow[r, Rightarrow] & \textup{\ref{thm:A-full-zero-loci-1}} \arrow[d, Rightarrow] \\
 & \textup{\ref{thm:A-full-zero-loci-4}} \arrow[u, Rightarrow] & \textup{\ref{thm:A-full-zero-loci-5}} \arrow[l, Rightarrow].
\end{tikzcd}
\]
By definition, \ref{thm:A-full-zero-loci-2} $\Rightarrow$ \ref{thm:A-full-zero-loci-3} is trivial.
\ref{thm:A-full-zero-loci-3} $\Rightarrow$ \ref{thm:A-full-zero-loci-2} follows from Assertion~\ref{thm:A-full-positivity}.
\ref{thm:A-full-zero-loci-1} $\Rightarrow$ \ref{thm:A-full-zero-loci-5} is also trivial.
We shall first prove \ref{thm:A-full-zero-loci-2} $\Rightarrow$ \ref{thm:A-full-zero-loci-1}.
Let $g\in G$ be fixed.
By Assertion~\ref{thm:A-full-functorial}, for any $m\geq 1$,
\[
\widehat{h}_{G}(g^m(x)) = \sum_{i=1}^n \chi_i(g)^m \, \widehat{h}_{D_i,g_i}(x) = 0.
\]
Since these rational points $g^m(x)$ are of bounded degree over $\bQ$, it follows from Assertion~\ref{thm:A-full-Northcott} that the forward $g$-orbit $\cO_g(x)$ of $x$ is finite, i.e., $x$ is $g$-periodic, noting that \(g\) is an automorphism.

We next show \ref{thm:A-full-zero-loci-5} $\Rightarrow$ \ref{thm:A-full-zero-loci-4}.
Suppose that $x$ is $g$-periodic for some $\id \neq g \in G$.
Note that by the construction of the characters $\chi_i$ (see \cref{prop:characters-of-G}), there is some $1\leq i\leq n$ such that $\chi_i(g) = \lambda_1(g)$.
Consider the growth of the function $\widehat{h}_G$ along the forward $g$-orbit $\cO_g(x)$ of $x$, which is finite by assumption.
In other words, we have
\[
O(1) = \widehat{h}_{G}(g^m(x)) \geq \lambda_1(g)^m \, \widehat{h}_{D_i,g_i}(x),
\]
where the last inequality is due to Assertion~\ref{thm:A-full-positivity}. 
As $\lambda_1(g)>1$, we see that $\widehat{h}_{D_i,g_i}(x)$ has to be zero by letting \(m\) tend to infinity. 

It remains to prove \ref{thm:A-full-zero-loci-4} $\Rightarrow$ \ref{thm:A-full-zero-loci-2}.
Without loss of generality, we may assume that $\widehat{h}_{D_1,g_1}(x) = 0$. 
Via a similar argument in the proof of \ref{thm:A-full-zero-loci-2} $\Rightarrow$ \ref{thm:A-full-zero-loci-1}, we claim that $x$ is $g_1$-periodic.
Indeed, by Assertion~\ref{thm:A-full-functorial} and \cref{thm:nef-big}\ref{thm:nef-big-3}, we have for any $m\geq 1$,
\[
\widehat{h}_{G}(g_1^m(x)) = \sum_{j=2}^n \chi_j(g_1)^m \, \widehat{h}_{D_j}(x) < \sum_{j=2}^n \widehat{h}_{D_j}(x).
\]
Note that the $g_1^m(x)$ are of bounded degree over $\bQ$.
By Assertion~\ref{thm:A-full-Northcott}, $x$ is $g_1$-periodic.
Denote the finite period by $e_1\in \bZ_{>0}$, i.e., $g_1^{e_1}(x)= x$.
Consider the growth of the function $\widehat{h}_{G}$ along the orbit $\cO_{g_1^{-e_1}}(x)$ of $x$ under the automorphism $g_1^{-e_1}$.
Precisely, for any $m\geq 1$, we have
\[
\widehat{h}_{G}(x) = \widehat{h}_{G}(g_1^{-e_1}(x)) = \widehat{h}_{G}(g_1^{-e_1m}(x)) = \sum_{j=2}^n \chi_j(g_1)^{-e_1m} \, \widehat{h}_{D_j}(x), 
\]
which forces $\widehat{h}_{D_j}(x) = 0$ for all $j>1$ 
by noting that $\chi_j(g_1)<1$. 
This verifies \ref{thm:A-full-zero-loci-4} $\Rightarrow$ \ref{thm:A-full-zero-loci-2}. 

We thus complete the proof of \cref{thm:A-full}.
\end{proof}

As a direct consequence of \cref{thm:A-full}, we also obtain the following.

\begin{corollary}[{cf.~\cite[Proposition~7 and Proof of Theorem~2(c)]{KS14-exa}}]
\label{cor:non-periodic}
Under \cref{key-hyp}, for any \(g\in G\) and any point $x\in (X\backslash\bB_+(D))(\Qbar)$, we have
\[
\alpha_g(x) = \left\{
\begin{array}{cl}
1 & \text{if $x$ is $g$-periodic}, \\
\lambda_1(g) & \text{if $x$ is not $g$-periodic}.
\end{array}
\right.
\]
\end{corollary}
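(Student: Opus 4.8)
The plan is to prove the two cases of the stated dichotomy separately; the periodic case is immediate, while the non-periodic case is where the results of \Cref{section:can-ht} enter.

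\emph{Periodic case.} If $x$ is $g$-periodic, the forward orbit $\cO_g(x)$ is a finite set, so the values $h_{H_X}^+(g^m(x))$ lie in a finite subset of $[1,+\infty)$, hence are bounded above by some $B\ge 1$. Then $1\le h_{H_X}^+(g^m(x))^{1/m}\le B^{1/m}\to 1$, so $\alpha_g(x)=1$. (This also covers $g=\id$, for which $\lambda_1(\id)=1$.)

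\emph{Non-periodic case.} Assume now that $x$ is not $g$-periodic; in particular $g\neq\id$, so $g$ has positive entropy and $\lambda_1(g)>1$. For the upper bound I would invoke Matsuzawa's estimate (cf.~\eqref{eq:Matsuzawa}, applied to $g$): there is a constant $C$ with $h_{H_X}^+(g^m(x))\le C\,m^{\rho(X)-1}\,\lambda_1(g)^m\,h_{H_X}^+(x)$, so taking $m$-th roots and letting $m\to\infty$ gives $\alpha_g(x)\le\lambda_1(g)$. For the reverse inequality, first pick via \cref{prop:characters-of-G} an index $i$ with $\chi_i(g)=\lambda_1(g)$. Since $x\in(X\backslash\bB_+(D))(\Qbar)$ is not $g$-periodic, the chain of equivalences in \cref{thm:A-full}\ref{thm:A-full-zero-loci} forces $\widehat{h}_{D_i,g_i}(x)\neq 0$ (were it zero, statement \ref{thm:A-full-zero-loci-4} would hold, hence \ref{thm:A-full-zero-loci-1}, and $x$ would be $g$-periodic), and then \cref{thm:A-full}\ref{thm:A-full-positivity} upgrades this to $\widehat{h}_{D_i,g_i}(x)>0$. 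Next, using $\widehat{h}_{D_i,g_i}=h_{D_i}+O(1)$ from \cref{lemma:nef-can-ht}\ref{lemma:nef-can-ht-1} together with the elementary comparison $h_{D_i}\le C'\,h_{H_X}^+$ on $X(\Qbar)$ — valid because $D_i$ is nef, so $NH_X-D_i$ is ample for $N\gg 0$, whence $h_{NH_X-D_i}\ge O(1)$, and the bounded terms are absorbed into $h_{H_X}^+\ge 1$ — one gets a constant $C''$ with $\widehat{h}_{D_i,g_i}\le C''\,h_{H_X}^+$. Evaluating along the orbit and using the functoriality $\widehat{h}_{D_i,g_i}\circ g=\chi_i(g)\,\widehat{h}_{D_i,g_i}=\lambda_1(g)\,\widehat{h}_{D_i,g_i}$ from \cref{lemma:nef-can-ht}\ref{lemma:nef-can-ht-2} yields
\[
\lambda_1(g)^m\,\widehat{h}_{D_i,g_i}(x)=\widehat{h}_{D_i,g_i}(g^m(x))\le C''\,h_{H_X}^+(g^m(x)).
\]
Since $\widehat{h}_{D_i,g_i}(x)>0$, taking $m$-th roots and letting $m\to\infty$ gives $\alpha_g(x)\ge\lambda_1(g)$, hence equality.

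The main obstacle is concentrated in the strict inequality $\widehat{h}_{D_i,g_i}(x)>0$: a priori \cref{thm:A-full}\ref{thm:A-full-positivity} alone only gives $\widehat{h}_{D_i,g_i}(x)\ge 0$, which is useless for the lower bound, so this step genuinely depends on the characterization of the null locus in \cref{thm:A-full}\ref{thm:A-full-zero-loci}. Everything else — the Matsuzawa upper bound, the comparison $h_{D_i}\le C'h_{H_X}^+$, and the orbit manipulation of canonical heights — is routine.
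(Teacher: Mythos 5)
Your proposal is correct and follows essentially the same route as the paper: the periodic case is handled by finiteness of the orbit, the upper bound $\alpha_g(x)\le\lambda_1(g)$ comes from the Matsuzawa/Kawaguchi--Silverman estimate, and the lower bound hinges on choosing $i$ with $\chi_i(g)=\lambda_1(g)$ and extracting $\widehat{h}_{D_i,g_i}(x)>0$ from the zero-locus characterization in \cref{thm:A-full}\ref{thm:A-full-zero-loci} together with \ref{thm:A-full-positivity}. The only (cosmetic) difference is that you compare $\widehat{h}_{D_i,g_i}\le C''h_{H_X}^+$ directly via nefness of $D_i$, whereas the paper bounds $h_{H_X}\ge \widehat{h}_G+O(1)=\sum_j\chi_j(g)^m\widehat{h}_{D_j,g_j}(x)+O(1)$ and drops the nonnegative terms with $j\ne i$; both are valid.
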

\begin{proof}
Let $g\in G$ be fixed.
First, we assume that $x\in (X\backslash\bB_+(D))(\Qbar)$ is a non-$g$-periodic point.
As we mentioned before, the limit defining $\alpha_g(x)$ exists and is independent of the choice of the ample divisor (see \cite[Theorem~3]{KS16-abelian} and \cite[Proposition~12]{KS16}, respectively).
Choose an ample divisor $H_X$ such that $H_X - D$ is ample (noting that the ample cone is open).
It follows from \cref{thm:weil-height}\ref{thm:weil-height-additivity} (Additivity) and \ref{thm:weil-height-positivity} (Positivity) that for any $m\geq 1$,
\[
h_{H_X}(g^m(x)) = h_{D}(g^m(x)) + h_{H_X-D}(g^m(x)) + O(1) \geq h_{D}(g^m(x)) + O(1).
\]
On the other hand, \cref{thm:A-full}\ref{thm:A-full-def} asserts that
\[
h_{D}(g^m(x)) = \widehat{h}_G(g^m(x)) + O(1).
\]
Putting them together yields that
\[
h_{H_X}(g^m(x)) \geq \widehat{h}_G(g^m(x)) + O(1) = \sum_{j=1}^n \chi_j(g)^m \, \widehat{h}_{D_j,g_j}(x) + O(1),
\]
where the equality is from \cref{thm:A-full}\ref{thm:A-full-functorial}.
Furthermore, by \cref{prop:characters-of-G}, there is some \(1\leq i\leq n\) such that \(\chi_i(g) = \lambda_1(g)\).
We thus obtain that
\[
h_{H_X}(g^m(x)) \geq \lambda_1(g)^m \, \widehat{h}_{D_i,g_i}(x) + O(1).
\]
Note that the term $O(1)$ does not depend on $x$ nor $m$.
Also, according to \cref{thm:A-full}\ref{thm:A-full-zero-loci}, one has \(\widehat{h}_{D_i,g_i}(x) > 0\) for all \(1\leq i\leq n\).
Now by taking $m$-th roots and letting $m\to \infty$, one easily has $\alpha_g(x) \geq \lambda_1(g)$.
The reverse inequality is due to \cite[Theorem~4]{KS16} or \cite[Theorem~1.4]{Matsuzawa20}.

Second, we assume that \(x\) is \(g\)-periodic. 
Then with the ample divisor \(H_X\) chosen as above, it is clear that \((h_{H_X}(g^m(x)))_{m\in\bN}\) is a finite set. Therefore,
\[
1\leq \alpha_g(x)=\lim_{m\to\infty}(h_{H_X}^+(g^m(x)))^{1/m}\leq 1.
\]
We finish the proof of \cref{cor:non-periodic}.
\end{proof}

\begin{remark}\label{rmk:H_G}
Under \cref{key-hyp}, following \cite{Silverman91}, we can also define a function
\[
\widehat{H}_G\colon X(\Qbar) \to \bR \quad \text{by} \quad 
\widehat{H}_G(x) = \prod_{i=1}^n \widehat{h}_{D_i,g_i}(x).
\]
Note that by the projection formula, for any $g\in G$, one has $\prod_{i=1}^n \chi_i(g) = 1$.
It follows that
\[
\widehat{H}_G \circ g = \prod_{i=1}^n \widehat{h}_{D_i,g_i} \circ g = \prod_{i=1}^n \chi_i(g) \, \widehat{h}_{D_i,g_i} = \prod_{i=1}^n \widehat{h}_{D_i,g_i} = \widehat{H}_G.
\]
In other words, the function $\widehat{H}_G$ is $G$-invariant.
Let $x\in (X\backslash\bB_+(D))(\Qbar)$ be arbitrary.
Then according to \cref{thm:A-full}\ref{thm:A-full-positivity}, each \(\widehat{h}_{D_i,g_i}(x)\geq 0\).
This yields that
\[
\sqrt[\leftroot{-1}\uproot{6} n]{\widehat{H}_G(x)} \leq \frac{\widehat{h}_G(x)}{n}.
\]
Moreover, by \cref{thm:A-full}\ref{thm:A-full-zero-loci}, \(\widehat{H}_G(x)=0\) if and only if \(x\) is $g$-periodic for any \(g\in G\).
\end{remark}

Inspired by \cite[Theorem~1.3(a)]{Silverman91}, we ask the following.

\begin{question}
\label{question:H_G}
For any point $x\in (X\backslash\bB_+(D))(\Qbar)$ with infinite $G$-orbit, is there any lower bound of $\widehat{H}_G(x)$ in terms of $\widehat{h}_G(x)$?
\end{question}

Note that if the above question has an affirmative answer, then one could prove a similar result as \cite[Theorem~1.2(b)]{Silverman91} using the Northcott property for $\widehat{h}_G$ (i.e., \cref{thm:A-full}\ref{thm:A-full-Northcott}).
Namely, there are only finitely many infinite $G$-orbits in $(X\backslash\bB_+(D))(K)$, where $K$ is any number field.

\subsection{Proofs of Theorem \ref{thm:A} and Corollaries \ref{cor:KSC} and \ref{cor:arithmetic-estimate}}

\begin{proof}[Proof of \cref{thm:A}]
By \cref{thm:nef-big}, we can always assume \cref{key-hyp}.
Take \(Z\) to be the augmented base locus \(\bB_+(D)\) of \(D\), which is a $G$-invariant Zariski closed proper subset of $X$ (see \cref{lemma:aug-base-loci-G-inv}).
\cref{thm:A} then follows easily from \cref{thm:A-full}.
\end{proof}

\begin{proof}[Proof of \cref{cor:KSC}]
It follows readily from \cref{lemma:aug-base-loci-G-inv,cor:non-periodic}.
\end{proof}

\begin{proof}[Proof of \cref{cor:arithmetic-estimate}]
Take \(Z\) to be the augmented base locus \(\bB_+(D)\) of \(D\) as in \cref{thm:A}.
Fix a $g$-periodic point $x\in (X\backslash Z)(\Qbar)$ and an ample divisor \(H_X\) on \(X\).
By \cref{thm:A-full}\ref{thm:A-full-def} and \ref{thm:A-full-zero-loci}, we have $h_D(x) = \widehat{h}_G(x) + O(1) = O(1)$. 
Besides, according to \cref{prop:B_+}\ref{prop:B_+-modification}, there is \(\varepsilon>0\) such that \(D-\varepsilon H_X\) is an effective $\bQ$-divisor and \(\bB_+(D) = \bB(D-\varepsilon H_X) = \Bs(M(D-\varepsilon H_X))\) for some $M\geq 1$.
Since \(x\not\in \bB_+(D)\), by applying \cref{thm:weil-height}\ref{thm:weil-height-additivity} (Additivity) and \ref{thm:weil-height-positivity} (Positivity) to \(M(D-\varepsilon H_X)\), we obtain that \(h_{D-\varepsilon H_X}(x)\geq O(1)\).
It thus follows that
\begin{align*}
O(1) = h_D(x) = h_{D-\varepsilon H_X}(x)+h_{\varepsilon H_X}(x)+O(1) \geq h_{\varepsilon H_X}(x) + O(1).
\end{align*}
Therefore, \(h_{\varepsilon H_X}(x)\) and \(h_{H_X}(x)\) are both bounded.
Assertion~\ref{cor:arithmetic-estimate1} is thus proved.

Assertion~\ref{cor:arithmetic-estimate2} follows easily from \cite[Proposition~3]{KS16} and \cref{cor:non-periodic}.
\end{proof}

\bibliographystyle{amsalpha}
\bibliography{KSC}

\newcommand{\etalchar}[1]{$^{#1}$}
\providecommand{\bysame}{\leavevmode\hbox to3em{\hrulefill}\thinspace}
\providecommand{\MR}{\relax\ifhmode\unskip\space\fi MR }
\providecommand{\MRhref}[2]{%
  \href{http://www.ams.org/mathscinet-getitem?mr=#1}{#2}
}
\providecommand{\href}[2]{#2}
\begin{thebibliography}{MMS{\etalchar{+}}22}

\bibitem[Bir67]{Birkhoff67}
Garrett Birkhoff, \emph{Linear transformations with invariant cones}, Amer. Math. Monthly \textbf{74} (1967), 274--276. \MR{0214605}

\bibitem[CS93]{CS93}
Gregory~S. Call and Joseph~H. Silverman, \emph{Canonical heights on varieties with morphisms}, Compositio Math. \textbf{89} (1993), no.~2, 163--205. \MR{1255693}

\bibitem[CWZ14]{CWZ14}
Frederic Campana, Fei Wang, and De-Qi Zhang, \emph{Automorphism groups of positive entropy on projective threefolds}, Trans. Amer. Math. Soc. \textbf{366} (2014), no.~3, 1621--1638. \MR{3145744}

\bibitem[Dan20]{Dang20}
Nguyen-Bac Dang, \emph{Degrees of iterates of rational maps on normal projective varieties}, Proc. Lond. Math. Soc. (3) \textbf{121} (2020), no.~5, 1268--1310. \MR{4133708}

\bibitem[DGH{\etalchar{+}}22]{DGHLS22}
Nguyen-Bac Dang, Dragos Ghioca, Fei Hu, John Lesieutre, and Matthew Satriano, \emph{Higher arithmetic degrees of dominant rational self-maps}, Ann. Sc. Norm. Super. Pisa Cl. Sci. (5) \textbf{23} (2022), no.~1, 463--481. \MR{4407198}

\bibitem[DHZ15]{DHZ15}
Tien-Cuong Dinh, Fei Hu, and De-Qi Zhang, \emph{Compact {K}\"ahler manifolds admitting large solvable groups of automorphisms}, Adv. Math. \textbf{281} (2015), 333--352. \MR{3366842}

\bibitem[Din05]{Dinh05}
Tien-Cuong Dinh, \emph{Suites d'applications m\'{e}romorphes multivalu\'{e}es et courants laminaires}, J. Geom. Anal. \textbf{15} (2005), no.~2, 207--227. \MR{2152480}

\bibitem[Din12]{Din12}
\bysame, \emph{Tits alternative for automorphism groups of compact {K}\"{a}hler manifolds}, Acta Math. Vietnam. \textbf{37} (2012), no.~4, 513--529. \MR{3058661}

\bibitem[DS04]{DS04}
Tien-Cuong Dinh and Nessim Sibony, \emph{Groupes commutatifs d'automorphismes d'une vari\'et\'e k\"ahl\'erienne compacte}, Duke Math. J. \textbf{123} (2004), no.~2, 311--328. \MR{2066940}

\bibitem[DS05]{DS05}
\bysame, \emph{Une borne sup\'{e}rieure pour l'entropie topologique d'une application rationnelle}, Ann. of Math. (2) \textbf{161} (2005), no.~3, 1637--1644. \MR{2180409}

\bibitem[ELM{\etalchar{+}}06]{ELMNP06}
Lawrence Ein, Robert Lazarsfeld, Mircea Musta\c{t}\u{a}, Michael Nakamaye, and Mihnea Popa, \emph{Asymptotic invariants of base loci}, Ann. Inst. Fourier (Grenoble) \textbf{56} (2006), no.~6, 1701--1734. \MR{2282673}

\bibitem[Gro03]{Gromov03}
Mikha{\"{\i}}l Gromov, \emph{On the entropy of holomorphic maps}, Enseign. Math. (2) \textbf{49} (2003), no.~3-4, 217--235, preprint SUNY (1977). \MR{2026895}

\bibitem[HJ13]{HJ13}
Roger~A. Horn and Charles~R. Johnson, \emph{Matrix analysis}, second ed., Cambridge University Press, Cambridge, 2013. \MR{2978290}

\bibitem[HS00]{GTM201}
Marc Hindry and Joseph~H. Silverman, \emph{Diophantine geometry. {A}n introduction}, Graduate Texts in Mathematics, vol. 201, Springer-Verlag, New York, 2000. \MR{1745599}

\bibitem[Hu20]{Hu20}
Fei Hu, \emph{A theorem of {T}its type for automorphism groups of projective varieties in arbitrary characteristic}, Math. Ann. \textbf{377} (2020), no.~3-4, 1573--1602, With an appendix by Tomohide Terasoma. \MR{4126902}

\bibitem[Hu23]{Hu-lc}
\bysame, \emph{Eigenvalues and dynamical degrees of self-maps on abelian varieties}, J. Algebraic Geom. (2023+), 29 pp., electronically published on February 7, 2023, \doi{10.1090/jag/806}.

\bibitem[Kaw06a]{Kawaguchi06}
Shu Kawaguchi, \emph{Canonical height functions for affine plane automorphisms}, Math. Ann. \textbf{335} (2006), no.~2, 285--310. \MR{2221115}

\bibitem[Kaw06b]{Kawaguchi06-Crelle}
\bysame, \emph{Canonical heights, invariant currents, and dynamical eigensystems of morphisms for line bundles}, J. Reine Angew. Math. \textbf{597} (2006), 135--173. \MR{2264317}

\bibitem[Kaw08]{Kawaguchi08}
\bysame, \emph{Projective surface automorphisms of positive topological entropy from an arithmetic viewpoint}, Amer. J. Math. \textbf{130} (2008), no.~1, 159--186. \MR{2382145}

\bibitem[KK01]{KK01}
Boris Kalinin and Anatole Katok, \emph{Invariant measures for actions of higher rank abelian groups}, Smooth ergodic theory and its applications ({S}eattle, {WA}, 1999), Proc. Sympos. Pure Math., vol.~69, Amer. Math. Soc., Providence, RI, 2001, pp.~593--637. \MR{1858547}

\bibitem[KKRH14]{KKRH14}
Anatole Katok, Svetlana Katok, and Federico Rodriguez~Hertz, \emph{The {F}ried average entropy and slow entropy for actions of higher rank abelian groups}, Geom. Funct. Anal. \textbf{24} (2014), no.~4, 1204--1228. \MR{3248484}

\bibitem[Kle05]{Kle05}
Steven~L. Kleiman, \emph{The {P}icard scheme}, Fundamental algebraic geometry: {G}rothendieck's {FGA} explained, Mathematical Surveys and Monographs, vol. 123, American Mathematical Society, Providence, RI, 2005, pp.~235--321. \MR{2223410}

\bibitem[KS94]{KS94}
Anatole Katok and Ralf~J. Spatzier, \emph{First cohomology of {A}nosov actions of higher rank abelian groups and applications to rigidity}, Inst. Hautes \'{E}tudes Sci. Publ. Math. (1994), no.~79, 131--156. \MR{1307298}

\bibitem[KS14]{KS14-exa}
Shu Kawaguchi and Joseph~H. Silverman, \emph{Examples of dynamical degree equals arithmetic degree}, Michigan Math. J. \textbf{63} (2014), no.~1, 41--63. \MR{3189467}

\bibitem[KS16a]{KS16-abelian}
\bysame, \emph{Dynamical canonical heights for {J}ordan blocks, arithmetic degrees of orbits, and nef canonical heights on abelian varieties}, Trans. Amer. Math. Soc. \textbf{368} (2016), no.~7, 5009--5035. \MR{3456169}

\bibitem[KS16b]{KS16}
\bysame, \emph{On the dynamical and arithmetic degrees of rational self-maps of algebraic varieties}, J. Reine Angew. Math. \textbf{713} (2016), 21--48. \MR{3483624}

\bibitem[LS21]{LS21}
John Lesieutre and Matthew Satriano, \emph{Canonical heights on hyper-{K}\"{a}hler varieties and the {K}awaguchi-{S}ilverman conjecture}, Int. Math. Res. Not. IMRN (2021), no.~10, 7677--7714. \MR{4259157}

\bibitem[Mat20a]{Matsuzawa20-Adv}
Yohsuke Matsuzawa, \emph{{K}awaguchi--{S}ilverman conjecture for endomorphisms on several classes of varieties}, Adv. Math. \textbf{366} (2020), Paper No. 107086, 26 pp. \MR{4070310}

\bibitem[Mat20b]{Matsuzawa20}
\bysame, \emph{On upper bounds of arithmetic degrees}, Amer. J. Math. \textbf{142} (2020), no.~6, 1797--1820. \MR{4176545}

\bibitem[Mat23]{Mat23-survey}
\bysame, \emph{{Recent advances on Kawaguchi-Silverman conjecture}}, To appear in the proceedings for the Simons Symposia on Algebraic, Complex, and Arithmetic Dynamics (2023), 36 pp., \arxiv{2311.15489v1}.

\bibitem[MMS{\etalchar{+}}22]{MMSZZ22}
Yohsuke Matsuzawa, Sheng Meng, Takahiro Shibata, De-Qi Zhang, and Guolei Zhong, \emph{Invariant subvarieties with small dynamical degree}, Int. Math. Res. Not. IMRN (2022), no.~15, 11448--11483. \MR{4458556}

\bibitem[MSS18]{MSS18}
Yohsuke Matsuzawa, Kaoru Sano, and Takahiro Shibata, \emph{Arithmetic degrees and dynamical degrees of endomorphisms on surfaces}, Algebra Number Theory \textbf{12} (2018), no.~7, 1635--1657. \MR{3871505}

\bibitem[MY22]{MY22}
Yohsuke Matsuzawa and Shou Yoshikawa, \emph{Kawaguchi-{S}ilverman conjecture for endomorphisms on rationally connected varieties admitting an int-amplified endomorphism}, Math. Ann. \textbf{382} (2022), no.~3-4, 1681--1704. \MR{4403232}

\bibitem[MZ22]{MZ22}
Sheng Meng and De-Qi Zhang, \emph{Kawaguchi-{S}ilverman conjecture for certain surjective endomorphisms}, Doc. Math. \textbf{27} (2022), 1605--1642. \MR{4574221}

\bibitem[MZ23a]{MZ23-survey}
\bysame, \emph{{Advances in the equivariant minimal model program and their applications in complex and arithmetic dynamics}}, To appear in the Proceedings of Simons Conference (2023), 26 pp., \arxiv{2311.16369v1}.

\bibitem[MZ23b]{MZ23-ksc}
\bysame, \emph{{Structures theorems and applications of non-isomorphic surjective endomorphisms of smooth projective threefolds}}, preprint (2023), 48 pp., \arxiv{2309.07005v1}.

\bibitem[NZ10]{NZ10}
Noboru Nakayama and De-Qi Zhang, \emph{Polarized endomorphisms of complex normal varieties}, Math. Ann. \textbf{346} (2010), no.~4, 991--1018. \MR{2587100}

\bibitem[Shi19]{Shibata19}
Takahiro Shibata, \emph{Ample canonical heights for endomorphisms on projective varieties}, J. Math. Soc. Japan \textbf{71} (2019), no.~2, 599--634. \MR{3943453}

\bibitem[Sil91]{Silverman91}
Joseph~H. Silverman, \emph{Rational points on {$K3$} surfaces: a new canonical height}, Invent. Math. \textbf{105} (1991), no.~2, 347--373. \MR{1115546}

\bibitem[Sil07]{GTM241}
\bysame, \emph{The arithmetic of dynamical systems}, Graduate Texts in Mathematics, vol. 241, Springer, New York, 2007. \MR{2316407}

\bibitem[Tru20]{Tru20}
Tuyen~Trung Truong, \emph{Relative dynamical degrees of correspondences over a field of arbitrary characteristic}, J. Reine Angew. Math. \textbf{758} (2020), 139--182. \MR{4048444}

\bibitem[Yom87]{Yomdin87}
Yosef Yomdin, \emph{Volume growth and entropy}, Israel J. Math. \textbf{57} (1987), no.~3, 285--300. \MR{889979}

\bibitem[Zha06]{Zhang06}
Shou-Wu Zhang, \emph{Distributions in algebraic dynamics}, Surveys in differential geometry. {V}ol. {X}, Surv. Differ. Geom., vol.~10, Int. Press, Somerville, MA, 2006, pp.~381--430. \MR{2408228}

\bibitem[Zha09]{Zhang09}
De-Qi Zhang, \emph{A theorem of {T}its type for compact {K}\"ahler manifolds}, Invent. Math. \textbf{176} (2009), no.~3, 449--459. \MR{2501294}

\bibitem[Zha13]{Zhang13}
\bysame, \emph{Algebraic varieties with automorphism groups of maximal rank}, Math. Ann. \textbf{355} (2013), no.~1, 131--146. \MR{3004578}

\bibitem[Zha16]{Zhang16}
\bysame, \emph{{$n$}-dimensional projective varieties with the action of an abelian group of rank {$n-1$}}, Trans. Amer. Math. Soc. \textbf{368} (2016), no.~12, 8849--8872. \MR{3551591}

\bibitem[Zho22]{Zho22}
Guolei Zhong, \emph{Compact {K}\"{a}hler threefolds with the action of an abelian group of maximal rank}, Proc. Amer. Math. Soc. \textbf{150} (2022), no.~1, 55--68. \MR{4335856}

\bibitem[Zho23]{Zho23}
\bysame, \emph{Existence of the equivariant minimal model program for compact {K}\"{a}hler threefolds with the action of an abelian group of maximal rank}, Math. Nachr. \textbf{296} (2023), no.~7, 3128--3135. \MR{4626875}

\end{thebibliography}

\end{document}